\documentclass[10pt,twoside,a4paper]{amsart}
\usepackage{amssymb,amsmath,amsfonts,amsthm,graphicx,amscd}
\usepackage{enumerate,mathrsfs,longtable,bm,float,xcolor}
\usepackage[dvips]{epsfig}
\usepackage{amsmath,amsfonts,amssymb,amsthm,bm,float,makecell,booktabs,MnSymbol}

\usepackage{enumerate}
\usepackage{amscd}
\usepackage{tikz}
\usetikzlibrary{positioning}
\usepackage[all,cmtip,line]{xy}

\DeclareMathAlphabet{\mathpzc}{OT1}{pzc}{m}{it}

\numberwithin{equation}{section}

\theoremstyle{plain}

\newtheorem{lem}{Lemma}[section]

\newtheorem{thm}[lem]{Theorem}
\newtheorem{prop}[lem]{Proposition}
\newtheorem{obs}[lem]{Observation}
\newtheorem{cor}[lem]{Corollary}
\theoremstyle{definition}
\newtheorem{exa}[lem]{Example}
\newtheorem{rem}[lem]{Remark}
\newtheorem{defn}[lem]{Definition}
\newtheorem{definition}[lem]{Definition}

\begin{document}
	
	\baselineskip 13truept
	
	\title{On $S$-prime and $S$-primary elements in multiplicative lattices}
	
	\author{Sachin Sarode*, Chetan Patil** and Vinayak Joshi***}
	\address{\rm *Department of Mathematics, Shri Muktanand College\\ Gangapur, Dist. Chh. Sambhajinagar - 431 109, India.} \email{sarodemaths@gmail.com}

	\address{\rm ***School of Technology Management and Engineering, SVKM NMIMS Global University,
		Dhule-424 001, India.}
	\email{patilcs19@gmail.com}

	\address{\rm **Department of Mathematics, Savitribai Phule Pune University, Pune-411 007, India.}
	\email{vvjoshi@unipune.ac.in \\
		vinayakjoshi111@yahoo.com }

	\subjclass[2020]{Primary 13A15, 13C05, 06F10, Secondary 06A11}

	\begin{abstract} 
		  In this paper, we  study  $S$-prime elements and $S$-primary elements within the framework of multiplicative lattices. Furthermore, we define and explore weakly $S$-prime elements and weakly $S$-primary elements, which generalize weakly prime elements and weakly primary elements in multiplicative lattices respectively. 
		 We show that the weakly $S$-prime ideals (weakly $S$-primary ideals) of a commutative ring $R$ with $1$ correspond precisely to the weakly $S_L$-prime elements (weakly $S$-primary elements) of the ideal lattice $Id(R)$ of $R$, where $S_L = \{(s) \mid s \in S\}$.
	\end{abstract}

	\maketitle
	
	\vskip 5truept
	
	\noindent\textbf{Keywords:} $S$-prime element, weakly $S$-prime element, $S$-primary element, weakly $S$-primary element,  multiplicative lattice.
	\vskip 5truept
	

	\baselineskip 14truept 
	\section{Introduction}

 The concepts of prime and primary ideals are fundamental notions in commutative ring theory, and their generalizations have been extensively studied \cite{ABT, AB, AB1, AS, CK, HM, M, MG, V}. The ideal theory in rings with identity can be explored via multiplicative lattice theory, where prime and primary elements have also been generalized \cite{CYT, CUU, CUU1, JTY, MB, SPJ1, SPJ2}.
		
Hamed and Malek \cite{HM} introduced $S$‑prime ideals, extending classical prime ideals. An ideal $P$ disjoint from a multiplicative closed subset $S$  of a ring $R$ with identity is \textit{$S$‑prime} if there exists $ s\in S$ such that for all $a,b\in R$, $ab\in P$ implies $sa\in P$ or $sb\in P$. 
Independently, Massaoud \cite{M} and Visweswaran \cite{V} introduced $S$‑primary ideals, later extended to weakly $S$‑primary ideals by Celikel–Khashan \cite{CK} and Massaoud–Gouaid \cite{MG}. Similarly, Almahdi et al. \cite{ABT} and Mahdou et al. \cite{MMZ} defined weakly $S$‑prime ideals. Analogously,  Sarode, Patil and Joshi \cite{SPJ1} defined $S$‑prime elements in $V$‑lattices.  Now, we introduce and study weakly $S$‑prime elements in $V$‑lattices and weakly $S$‑primary elements in $c$‑lattices.
We prove that weakly $S$‑prime (weakly $S$‑primary) ideals of a commutative ring $R$ with $1$ correspond to weakly $S_L$‑prime (weakly $S_L$‑primary)  elements of the $c$‑lattice $\operatorname{Id} (R)$, where $S_L=\{ (s)\mid s\in S\}$. 
		 
We begin with some basics definitions. 

\begin{defn}[Sarode, Patil and Joshi \cite{SPJ1}] 
	A complete lattice $L$	is said to be a \textit{$V$-lattice}, if there exists a binary	operation $`` \cdot "$ called multiplication on $L$ satisfying the following conditions for $a, b, c \in L$: 
	\begin{enumerate} \item $ a \cdot b = b \cdot a $, 
		
	\item $a \cdot (b \cdot c) = (a \cdot b) \cdot c $, 
				
	\item $a \leq b$ implies $a \cdot c \leq b \cdot c $,
				
	\item $a \cdot b \leq a \wedge b $, 
				
	\item $1 \cdot a = a \cdot 1 = a$.
	\end{enumerate} 
\end{defn}
		
	A \textit{multiplicative lattice} is a complete lattice with a commutative, associative multiplication distributing over infinite joins and identity $1$.
	An element $c$ is called \textit{compact} if it satisfies $c\leq \bigvee _{\alpha }a_{\alpha }$ only when $c\leq \bigvee _{i=1}^na_{\alpha _i}$ for some finite subset. Let $L_*$ denotes the set of all compact elements of $L$. A multiplicative lattice $L$ is \textit{compactly generated} if every element is a join of compact elements. If all elements are compact, $L$ is \textit{compact}; if $1$ is compact, $L$ is \textit{$1$‑compact}. A \textit{$c$‑lattice} is a multiplicative lattice that is $1$‑compact, compactly generated, and closed under finite products of compact elements. A subset $S\subseteq L_*$ is multiplicatively closed if $a\cdot b\in S$ whenever $a,b\in S$. For $a,b\in L$, $(a:b)=\bigvee \{ x\mid x\cdot b\leq a\}$, and the radical of $a$ is $\sqrt{a}=\bigvee \{ x\in L_*\mid x^n\leq a\mathrm{\  for\  some\  }n\in \mathbb{Z^{\mathnormal{+}}}\}$; see \cite{AAJ, A, SJ, WD} .
		
	Note that a multiplicative  lattice  is a
	$V$-lattice  but  every $V$-lattice  need not be a multiplicative lattice. 
		
	Throughout, $S$ is a multiplicatively closed subset of a $V$-lattice $L$ such that $0\nin S$ and $1\in S$. 
		
	\begin{defn}
	An element $p \neq 1$ of a $V$-lattice $L$  is \textit{prime} element if $a	\cdot b \leq p$ implies $a \leq p$ or $b \leq p$ for all $a, ~b \in L$.  		
	\end{defn}
	
	\begin{defn} 
	An element $a$ of $V$-lattice $L$ is said to be \textit{zero-divisor} if there exists $b\in L\setminus \{0\}$ such that $a\cdot b=0$. The set of all zero-divisors of $L$ is denoted by $Z(L)$. 
	\end{defn}
		
	For further details on multiplicative lattices and S‑prime elements, see \cite{AAJ, A, D, JJJ, JS, S, SJ, WD}. Hereafter, we shall denote $a \cdot b$ simply by $ab$.

	\section{$S$-prime element}
		
	The following definition can be found in \cite{SPJ1}.
		
	\begin{defn}[Sarode, Patil and Joshi \cite{SPJ1}]
	Let $S$ be a multiplicatively closed subset of a $V$-lattice $L$. A proper element $p \in L$ satisfying $t \not\leq p$ for all $t \in S$ is called an \textit{$S$-prime} element of $L$ if there exists $s \in S$ 
	such that for all $a, b \in L$,   if $ab \leq p $, then  $sa \leq p \; \text{or} \; sb \leq p$.
	The set of all $S$-prime elements of $L$ is denoted by $Spec_s(L)$. If $S = \{1\}$, then $Spec_s(L)$ is denoted by $Spec(L)$, the set of all prime elements.
	\end{defn}

	\begin{rem} [\cite{D, SJ, WD}]
	Let $a,b $ be any elements of a $c$-lattice $L$. Then  $x b \leq a
	\Leftrightarrow x \leq (a:b)$. 
	\end{rem}

	\begin{lem} [Sarode, Patil and Joshi \cite{SPJ1}]\label{L2.1}
	Let $S$ be a multiplicatively closed subset of a multiplicative lattice $L$. If $p$ is proper element of $L$ such that $t \nleq p$ for all $t \in S$. Then $p$ is an $S$-prime element of $L$ if and only if $(p : s)$ is a prime element of $L$ for some $s \in S$. 
	\end{lem}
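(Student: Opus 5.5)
We prove the two implications separately. The tools are the residual $(p:s)=\bigvee\{x\mid xs\le p\}$, the adjunction $xs\le p \Leftrightarrow x\le (p:s)$ from the Remark, and commutativity and associativity of the multiplication. In a multiplicative lattice the adjunction holds because multiplication distributes over arbitrary joins: $(p:s)s=\bigvee\{xs\mid xs\le p\}\le p$, so $x\le(p:s)$ gives $xs\le(p:s)s\le p$ by monotonicity. The converse direction $xs\le p\Rightarrow x\le(p:s)$ is immediate from the definition of $(p:s)$ as a join.

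($\Rightarrow$) Suppose $p$ is $S$-prime, and let $s\in S$ be the witness from the definition. We show $(p:s)$ is prime.

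First, $(p:s)$ is proper. If $(p:s)=1$, then $s=1\cdot s\le p$, which contradicts $t\nleq p$ for all $t\in S$.

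Next, take $a,b\in L$ with $ab\le (p:s)$. Then $(sa)b=s(ab)\le p$. Applying $S$-primeness to the pair $sa,b$ gives $s(sa)\le p$ or $sb\le p$.

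\begin{itemize}
\item If $sb\le p$, then $b\le (p:s)$ and we are done.
\item If $s^2a\le p$, we only get $a\le(p:s^2)$, not $a\le(p:s)$.
\end{itemize}

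The second case is the main obstacle. To handle it, apply the definition to $a$ and $sb$ instead: $a(sb)=s(ab)\le p$, so $sa\le p$ or $s^2b\le p$. Combining this with the first application, $sa\le p$ and $sb\le p$ each finish the argument, so the only remaining case is $s^2a\le p$ and $s^2b\le p$.

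This residual case is fixed by switching witnesses. Use $(p:s^2)$ and first check that $s'=s^2$ also witnesses $S$-primeness: if $xy\le p$ then $sx\le p$ or $sy\le p$, hence $s^2x\le p$ or $s^2y\le p$. Now suppose $ab\le (p:s^2)$, so $s^2ab\le p$. Apply the definition with witness $s$ to the pair $s^2a,b$: either $s^3a\le p$ or $sb\le p$.

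This still loses a power of $s$. A stabilization argument in the spirit of $(p:s^n)$ is needed, and I would try the following. Show that $(p:s)=(p:s^2)$ for the witness $s$: if $s^2x\le p$, apply the definition to the pair $s$, $sx$, giving $s\cdot s\le p$ or $s(sx)\le p$. Here $s^2\in S$, so $s^2\le p$ is impossible. That only recovers the hypothesis, so instead apply it to the pair $s$, $x$ using $s\cdot(sx)\le p$ read as the product $(s)(sx)$. A cleaner route is the pair $(s^2)(x)\le p$: either $s\cdot s^2\le p$, impossible since $s^3\in S$, or $sx\le p$. So $(p:s^2)=(p:s)$.

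With this stabilization, the residual case $s^2a\le p$ gives $a\le (p:s^2)=(p:s)$. Hence $(p:s)$ is prime.

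($\Leftarrow$) Suppose $(p:s)$ is prime for some $s\in S$, and take $ab\le p$. Since $(p:s)s\le p$ gives $p\le(p:s)$, we have $ab\le p\le (p:s)$. By primeness, $a\le(p:s)$ or $b\le(p:s)$, that is, $sa\le p$ or $sb\le p$. Together with the standing hypotheses that $p$ is proper and $t\nleq p$ for all $t\in S$, this shows $p$ is $S$-prime with witness $s$.

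The key step throughout is the stabilization $(p:s^2)=(p:s)$. It follows by applying the $S$-prime condition to the product $s^2\cdot x$ and using $s^3\in S$, so that $s^3\nleq p$.
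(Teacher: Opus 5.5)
Your proof is correct and follows the standard route. The lemma itself is only cited in the paper, but the paper uses the same key step for the $S$-primary analogue in Proposition \ref{P3.4}: from $ab\le(p:s)$ you get $s^2a\le p$ or $sb\le p$, and in the first case you apply the definition to the product $s^2\cdot a$ and discard $s^3\le p$ because $s^3\in S$.

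Two fixes for the write-up. First, delete the exploratory detours: the pair $a,sb$, the switch of witness to $s^2$, and the pair $s,sx$. Second, justify $p\le(p:s)$ by $ps\le p\wedge s\le p$, because $(p:s)s\le p$ does not imply it.
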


	\begin{defn} [\cite{D}]
	Let $a$ be any element of a multiplicative lattice $L$, then $L/a =\{ x \in L \;|\; x \geq a\}$ is  a multiplicative  lattice with the  multiplication $x \circ y = x  y \vee a$.  Let $p$ be a proper element of a multiplicative lattice $L$. 
	For any $a \in L$, write $\bar{a} = a \vee p$. The \textit{zero-divisor set} of $L/p$ is defined as	$Z(L/p) = \{ \bar{a} \in L/p \mid \bar{a} \neq \bar{p},\ 	\exists\, \bar{b} \neq \bar{p} \text{ in } L/p \text{ such that } \bar{a} \circ \bar{b} = \bar{p} \},$		where $\bar{p}$ plays the role of the zero element in $L/p$.
	\end{defn}

	\begin{prop}\label{L2.5} 
	Let $S$ be a multiplicatively closed subset of a multiplicative lattice $L$ and $p$ is a proper element of $L$ such that $t \nleq p$ for all  $t\in S$. Consider the set $\bar{S} = \{s \vee p \;|\; s \in S \}$. If $Z(L/p) \cap \bar{S} = \emptyset$, then $p$ is an $S$-prime element of $L$ if and only if  $p$ is  a prime element of $L$.
	\end{prop}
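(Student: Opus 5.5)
The plan is to prove the two directions separately. The easy direction, prime $\Rightarrow$ $S$-prime, is immediate from the definition. The substantive direction uses the hypothesis $Z(L/p)\cap\bar S=\emptyset$ to remove the multiplier $s$ coming from the $S$-prime condition.

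First suppose $p$ is prime. By hypothesis $p$ is proper and $t\nleq p$ for all $t\in S$. Since $1\in S$, we take $s=1$: if $ab\le p$, primeness gives $a\le p$ or $b\le p$, that is, $1\cdot a\le p$ or $1\cdot b\le p$. Hence $p$ is $S$-prime. Alternatively, this follows from Lemma \ref{L2.1}, since $(p:1)=p$.

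Conversely, let $p$ be $S$-prime with witness $s\in S$, and let $ab\le p$. Then $sa\le p$ or $sb\le p$; by symmetry assume $sa\le p$, and suppose for contradiction that $a\nleq p$. We work in $L/p$ with $\bar x=x\vee p$. Then $\bar a\neq\bar p$ because $a\nleq p$, and $\bar s\neq\bar p$ because $s\nleq p$. The key computation uses distributivity of multiplication over (finite) joins:
$$\bar s\circ\bar a=(s\vee p)(a\vee p)\vee p = sa\vee sp\vee pa\vee p^2\vee p .$$
Each term is $\le p$: $sa\le p$ by assumption, and $sp,\ pa,\ p^2\le p$ because $xy\le x\wedge y$. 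Hence $\bar s\circ\bar a=\bar p$. By the definition of $Z(L/p)$ this gives $\bar s\in Z(L/p)$, and also $\bar s\in\bar S$, contradicting $Z(L/p)\cap\bar S=\emptyset$. Therefore $a\le p$, and likewise $b\le p$ in the other case, so $p$ is prime.

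The only delicate step is the expansion of $(s\vee p)(a\vee p)$. It relies on multiplication distributing over joins, which is exactly why the statement is made for multiplicative lattices rather than $V$-lattices. One must also check that both $\bar s$ and $\bar a$ differ from $\bar p$, so that the definition of the zero-divisor set genuinely applies.
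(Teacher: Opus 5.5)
Your proof is correct and uses the same key step as the paper: from $sx\le p$ you get $(s\vee p)\circ(x\vee p)=p$, and since $\bar s\neq\bar p$ and $\bar s\notin Z(L/p)$, this forces $x\le p$. The paper packages this step as $(p:s)=p$ and then invokes Lemma \ref{L2.1}, whereas you apply it directly to the $S$-prime condition; that bypasses the lemma but is otherwise the same argument.
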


	\begin{proof}
	If $p$ is  a prime element of $L$, then clearly $p$ is  an $S$-prime element of $L$, as $t \nleq p$ for all  $t\in S$. Conversely, suppose that $p$ is an $S$-prime element of $L$.  We claim that $(p:s) = p$ for all $s \in S$. Clearly, $p\leq (p:s)$ for all $s \in S$. Now, we show that $(p:s)\leq p$. Let $x \in L$ such that $x \leq (p:s)$. This gives $sx \leq p$. Consider $(s \vee p), ~ (x \vee p) \in L/p$, we get  $(s \vee p) \circ (x \vee p) =(s \vee p) \cdot (x \vee p) \vee p= p $ in $L/p$. As  $t \nleq p$ for all  $t\in S$, we get $s \vee p \neq p$ in $L/p$. Also, since $Z(L/p) \cap \bar{S} = \emptyset$, we get $s \vee p \not\in Z(L/p) $. Hence $x \vee p = p$ in $L/p$. Thus $x \leq p$. Since every  element below $ (p:s)$ is also below $ p$, in particular, we can take $x=(p:s)$. Thus, we get $(p:s) \leq p$. Therefore $(p:s) =p$, by Lemma \ref{L2.1}, $p$ is  a prime element of $L$. 
	\end{proof}

	\begin{prop}
	Let $S$ be a multiplicatively closed subset of a $c$-lattice $L$ and $i, p $ be any element of $L$ such that  $i \in L_*$ and $i \leq p$. Then $p$ is an $S$-prime element of $L$ if and only if $p$ is an $\bar{S}$-prime element of $L/i$, where $\bar{S} = \{s \vee i \;|\; s\in S\}$.
	\end{prop}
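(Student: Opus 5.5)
We argue directly from the definition of $S$-prime element, working inside the quotient lattice $L/i$ with multiplication $x\circ y = xy\vee i$. The basic observation is that for $x,y\in L/i$ and $p\ge i$ we have $x\circ y\le p$ if and only if $xy\le p$, since $i\le p$. We first check that the side conditions match. Clearly $p$ is proper in $L$ exactly when it is proper in $L/i$, because both lattices have top element $1$. Also $t\nleq p$ for all $t\in S$ if and only if $t\vee i\nleq p$ for all $t\in S$, again because $i\le p$. Finally, $\bar S$ is multiplicatively closed in $L/i$: for $s,t\in S$ we have $(s\vee i)\circ(t\vee i)=(s\vee i)(t\vee i)\vee i = st\vee i\in\bar S$, using distributivity of multiplication over joins and $si,ti,i^2\le i$. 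This also gives $\bar 1=1$.

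For the forward direction, suppose $p$ is $S$-prime with witness $s\in S$; we take $\bar s=s\vee i$ as the witness in $L/i$. Let $x,y\in L/i$ with $x\circ y\le p$. Then $xy\le p$, so $sx\le p$ or $sy\le p$, say $sx\le p$. Then $\bar s\circ x=(s\vee i)x\vee i = sx\vee ix\vee i\le p$, since $ix\le i\le p$. So $p$ is $\bar S$-prime in $L/i$.

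For the converse, suppose $p$ is $\bar S$-prime in $L/i$ with witness $s\vee i$. Let $a,b\in L$ with $ab\le p$. Set $x=a\vee i$ and $y=b\vee i$. Then $xy=ab\vee ai\vee bi\vee i^2\le p$, so $x\circ y\le p$. Hence $(s\vee i)\circ x\le p$ or $(s\vee i)\circ y\le p$, say the first. It follows that $sa\le (s\vee i)(a\vee i)\le p$, by monotonicity of multiplication, so $s$ witnesses that $p$ is $S$-prime in $L$.

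The only delicate step is ensuring that every product appearing in the definition of $\circ$ is absorbed by $i\le p$. This requires distributivity of multiplication over finite joins, which holds in a $c$-lattice. The hypothesis $i\in L_*$ does not seem to be needed, except possibly to guarantee that $L/i$ is again a $c$-lattice, where $1$-compactness and compact generation matter; it plays no role in the equivalence itself.
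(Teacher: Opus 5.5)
Your argument is correct and is essentially the paper's proof: you use the same witness $s\vee i$, the same reduction from $x\circ y\le p$ to $xy\le p$, and the same absorption of $ai$, $bi$, $i^2$ into $i\le p$ in the converse. The one check you leave out is $\bar S\subseteq (L/i)_*$, which the paper's definition of a multiplicatively closed set requires; this is exactly where the paper uses $i\in L_*$, since then $s\vee i$ is a join of two compact elements. Your suspicion that the hypothesis is inessential is nonetheless sound, because $s\vee i$ is compact in $L/i$ for any $i$: every element of $L/i$ already lies above $i$.
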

	\begin{proof}  First, we will show that $\bar{S}$ is a multiplicatively closed subset of $L/i$. Since $i$ is compact and every $s \in S$ is compact, $s \vee i$ is compact for every $s \in S$. Therefore $\bar{S} \subseteq  (L/i)_*$. Let $s_1 \vee i, ~ s_2 \vee i \in \bar{S} $. therefore $(s_1 \vee i) \circ (s_2 \vee i) = s_1 s_2 \vee s_1 i \vee s_2 i \vee i^2 \vee i = s_1 s_2 \vee  i $. This gives $(s_1 \vee i) \circ (s_2 \vee i) \in \bar{S}$. Hence $\bar{S}$ is multiplicatively closed subset of $L/i$. 
			
	We claim that $t' \nleq p$ for all $t' \in \bar{S}$. Suppose on the contrary that $t'_1 \leq p$ for some $t'_1 \in \bar{S}$. Therefore $t_1 \vee i \leq p$, where $t'_1 = t_1 \vee i$ and $t_1 \in S$. This gives $t_1 \leq p$,  a contradiction to  $t \not\leq p$ for all $t \in S$, as $p$ is an $S$-prime element. Hence  $t' \nleq p$ for all $t' \in \bar{S}$.  

	Now, we prove that $p$ is an $\bar{S}$-prime element of $L/i$. Let $x, y \in L/i$ with $x \circ y \leq p$. Therefore    $x \circ y  = xy \vee i \leq p$. Hence $xy \leq p$. As $p$ is an $S$-prime element of $L$, there exists $s \in S$ such that $xs \leq p$ or $ys \leq p$.  Since  $i \leq p$, we get $\bar{s}x=(s \vee i) x = sx \vee ix \leq p$ or   $\bar{s}y=(s \vee i) y = sy \vee iy \leq p$, where $\bar{s}=s\vee i$.  Therefore $p$ is an $\bar{S}$-prime element of $L/i$.
			
	Conversely, suppose that  $p$ is an $\bar{S}$-prime element of $L/i$.  Clearly, $t \nleq p$ for all $t \in S$.  Let $c, d \in L$ such that $ cd \leq p$. Since $i \leq p$, we get $(c \vee i ) (d \vee i) = cd \vee ci \vee di \vee i^2 \leq p$. As $(c \vee i), (d \vee i) \in L/i$ and $p$ is an $\bar{S}$-prime element of $L/i$ there exists $\bar{s}=s \vee i \in \bar{S}$ such that $(s \vee i) (c \vee i) \leq p$ or $(s \vee i) (d \vee i) \leq p$. This gives $sc \vee si \vee ci \vee i^2 \leq p$ or $sd \vee si \vee di \vee i^2 \leq p$.   Therefore $sc \leq p $ or $sd \leq p$. Hence $p$ is an $S$-prime element.
	\end{proof}

	\begin{lem}
	Let $S$ be a multiplicatively closed subset of a $V$-lattice $L$.  Let $q$ be a proper element of $L$ such that $q \in S$. If $p$ is an $S$-prime element of $L$, then $p  q$ is an $S$-prime element of $L$.
	\end{lem}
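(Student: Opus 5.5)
We take the witness for $pq$ to be the product $sq$, where $s\in S$ is the element attached to $p$ in the definition of an $S$-prime element. Since $q\in S$ and $S$ is multiplicatively closed, $sq\in S$. Only the axioms of a $V$-lattice are used: commutativity, associativity, monotonicity of multiplication, and $ab\leq a\wedge b$.

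The first step is to check the side conditions. By axiom (4), $pq\leq p\wedge q\leq p$. Since $p\neq 1$, also $pq\neq 1$, so $pq$ is proper. If $t\leq pq$ for some $t\in S$, then $t\leq p$, which contradicts $p$ being $S$-prime. Hence $t\nleq pq$ for all $t\in S$.

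The main step is the defining implication. Let $a,b\in L$ with $ab\leq pq$. Then $ab\leq p$, so $sa\leq p$ or $sb\leq p$. Suppose $sa\leq p$. Multiplying by $q$ and using monotonicity (3), we get $(sa)q\leq pq$. By commutativity and associativity,
\[
(sq)a=(sa)q\leq pq .
\]
The case $sb\leq p$ gives $(sq)b\leq pq$ in the same way. Therefore $sq\in S$ witnesses that $pq$ is an $S$-prime element of $L$.

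No step is a real obstacle. The only point needing care is to use $sq$ rather than $s$ as the witness: the bound $sa\leq p$ cannot be strengthened to $sa\leq pq$ directly, and the extra factor $q\in S$ supplies exactly the needed multiplication by $q$.
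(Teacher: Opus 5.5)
Your proof is correct and follows the paper's argument: both take $sq\in S$ as the witness, obtain $t\nleq pq$ for all $t\in S$ from $pq\le p$, and multiply $sa\le p$ (or $sb\le p$) by $q$ to get $(sq)a\le pq$ (or $(sq)b\le pq$). The only cosmetic difference is how properness of $pq$ is shown: you deduce it from $p\neq 1$, while the paper deduces it from $1\in S$ together with $t\nleq pq$ for all $t\in S$.
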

	\begin{proof} 
	First, we show that $t \nleq p  q$ for all $t \in S$. Suppose on the contrary that there exists an element $t \in S$ such that $t \leq p  q $. Since $ p  q \leq p$, we will get a contradiction to $t \nleq p$ for all $t \in S$. Hence $t \nleq p q$ for all $t \in S$. Since $1 \in S$ and $t \nleq p  q$ for all $t \in S$, we get $pq$ is a proper element of $L$. Let $a, b \in L$ such that $a  b \leq p  q$. Since $p  q \leq p$, we get $a  b \leq p$. As $p$ is an $S$-prime element of $L$, there exists an element $s \in S$ such that $s  a \leq p$  or $s  b \leq p$.  Therefore $q  s  a \leq p  q$  or $q  s  b \leq p  q$, where $q  s \in S$. Hence, $p  q$ is an $S$-prime element of $L$.  
	\end{proof}

	\begin{lem}\label{L2.8}
	Let  $S$ be a multiplicatively closed subset of a $c$-lattice $L$.  If $p$ is an $S$-prime element of $L$ and $q$ be a proper element of $L$ such that $q \leq p$, then there exists an element $s \in S$ such that $s  \sqrt{q} \leq p$.
	\end{lem}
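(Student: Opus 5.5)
The plan is to pass from $S$-primeness to primeness via Lemma \ref{L2.1} and then use the fact that a prime element contains the radical of anything below it. Since a $c$-lattice is a multiplicative lattice, Lemma \ref{L2.1} applies: as $p$ is $S$-prime, there is $s \in S$ with $(p:s)$ a prime element of $L$. We will show that this same $s$ satisfies $s\sqrt{q} \leq p$.

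First, $q \leq p \leq (p:s)$. The inequality $p \leq (p:s)$ holds because $sp \leq s \wedge p \leq p$, so $p$ is one of the elements in the join defining $(p:s)$.

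Next we show $\sqrt{q} \leq (p:s)$. By definition $\sqrt{q} = \bigvee\{x \in L_* \mid x^n \leq q \text{ for some } n\}$, so it suffices to treat each compact $x$ with $x^n \leq q$. For such $x$ we have $x^n \leq (p:s)$. Since $(p:s)$ is prime, induction on $n$ gives $x \leq (p:s)$: write $x^n = x\cdot x^{n-1}$, so either $x \leq (p:s)$ or $x^{n-1} \leq (p:s)$. Taking the join over all such $x$ yields $\sqrt{q} \leq (p:s)$.

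Finally, by the residuation property in the Remark ($y b \leq a \Leftrightarrow y \leq (a:b)$ in a $c$-lattice), $\sqrt{q} \leq (p:s)$ gives $s\sqrt{q} \leq p$. If one prefers to avoid the Remark, the same conclusion follows from distributivity of multiplication over joins: $s(p:s) = \bigvee\{sx \mid sx \leq p\} \leq p$, hence $s\sqrt{q} \leq s(p:s) \leq p$.

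The only step needing care is the induction showing that a prime element contains $x$ whenever it contains $x^n$. That is a routine use of the definition of prime. I expect no real obstacle, and the hypothesis that $q$ is proper is not needed in this argument.
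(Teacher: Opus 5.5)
Your proof is correct and follows the paper's argument: you obtain $s$ with $(p:s)$ prime from Lemma~\ref{L2.1}, use $q\le p\le (p:s)$ and primeness to place every compact $x$ with $x^n\le q$ below $(p:s)$, and conclude $s\sqrt{q}\le p$. Your version is slightly more careful. You work directly with the generators of $\sqrt{q}$, whereas the paper takes an arbitrary compact $z\le\sqrt{q}$ and needs an extra step to get $n$ with $z^n\le q$. You also rightly note that the properness of $q$ plays no role.
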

	\begin{proof} Since $p$ is an $S$-prime element of $L$, by Lemma \ref{L2.1}, $(p : s)$ is a prime element of $L$ for some $s \in S$.	Let $z$ be any compact element of $L$ such that $z \leq \sqrt{q}$. Therefore $z^n \leq q \leq p \leq (p : s)$ for some $n \in \mathbb{N}$. By primeness of $( p : s)$, $z \leq (p : s)$. Hence  $\sqrt{q} \leq (p :s)$ for $s \in S$, that is,  $s \sqrt{q} \leq p$.      
	\end{proof}

	\begin{lem}
	Let  $S$ be a multiplicatively closed subset of a $c$-lattice $L$.   If $p_1,\cdots, p_n$ are  $S$-prime elements of $L$, then there exists an element $s \in S$ such that $s \cdot \sqrt{p_1 \wedge \cdots \wedge p_n} \leq p_1 \wedge \cdots\wedge p_n$.
	\end{lem}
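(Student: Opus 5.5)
The plan is to apply Lemma~\ref{L2.8} to each $p_j$ separately, with $q = p_1 \wedge \cdots \wedge p_n$, and then multiply the resulting elements of $S$ together. First note that $q$ is proper. Indeed $q \leq p_1$, and $p_1$ is proper, since $1 \in S$ and $t \nleq p_1$ for all $t \in S$. Also $q \leq p_j$ for every $j$.

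Fix $j \in \{1,\dots,n\}$. Since $p_j$ is $S$-prime and $q \leq p_j$ is proper, Lemma~\ref{L2.8} gives an element $s_j \in S$ with $s_j \sqrt{q} \leq p_j$.

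Now set $s = s_1 s_2 \cdots s_n$. This lies in $S$ because $S$ is multiplicatively closed. For each $j$, write $s = s_j \cdot s'_j$, where $s'_j$ is the product of the remaining factors. Using commutativity, associativity, monotonicity of multiplication and $ab \leq a \wedge b$, we get
\[
s\sqrt{q} = s'_j\,(s_j\sqrt{q}) \leq s_j\sqrt{q} \leq p_j .
\]
Since this holds for every $j$, it follows that $s\sqrt{q} \leq p_1 \wedge \cdots \wedge p_n = q$, which is the claim.

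I expect no real obstacle. The only points needing care are checking that $q$ is proper, so that Lemma~\ref{L2.8} applies, and using $xy \leq x$ to absorb the extra factors $s'_j$. When $n=1$ we may take $s = s_1$ directly.
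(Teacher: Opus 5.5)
Your proof is correct and is essentially the paper's argument: apply Lemma~\ref{L2.8} once for each $p_j$, set $s=s_1\cdots s_n$, absorb the extra factors using $ab\le a$, and take the meet. The only cosmetic difference is that you apply Lemma~\ref{L2.8} directly with $q=p_1\wedge\cdots\wedge p_n\le p_j$. The paper instead takes $q=p_j$ and then invokes $\sqrt{p_1\wedge\cdots\wedge p_n}=\sqrt{p_1}\wedge\cdots\wedge\sqrt{p_n}$, so your version avoids that identity (of which only monotonicity of the radical is really needed).
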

		
	\begin{proof}
	Suppose $p_1, \cdots, p_n$ are  $S$-prime elements of $L$. By Lemma \ref{L2.8}, there exists an elements $s_i \in S$ such that $s_i  \sqrt{p_i} \leq p_i$. We have $\sqrt{p_1 \wedge \cdots \wedge p_n} = \sqrt{p_1} \wedge \cdots \wedge \sqrt{p_n}$. Let $s = s_1 s_2 \cdots s_n \in S$. Then $s  \sqrt{p_1 \wedge \cdots \wedge p_n}  \leq s  \sqrt{p_i} \leq p_i$ for every $i$. Hence $ s  (\sqrt{p_1 \wedge \cdots  \wedge p_n}) = s  (\sqrt{p_1} \wedge \cdots  \wedge \sqrt{p_n})  \leq p_1 \wedge \cdots \wedge p_n$. 
	\end{proof}
		
		

\begin{definition}[\cite{Birkhoff1967},\cite{WD}]
	Let $L_1, L_2, \dots, L_n$ be multiplicative lattices. Their \emph{finite direct product}, denoted by
	\[
	L_1 \times L_2 \times \cdots \times L_n,
	\]
	is the set of all ordered $n$-tuples
	\[
	(x_1, x_2, \dots, x_n),
	\]
	where $x_i \in L_i$ for each $i = 1,2,\dots,n$.
	
	The structure of a multiplicative lattice on $L_1 \times \cdots \times L_n$ is given by defining the order relation, lattice operations, and multiplication component-wise. Specifically, for any two elements $(x_1,\dots,x_n)$ and $(y_1,\dots,y_n)$, set
	\[
	(x_1,\dots,x_n) \leq (y_1,\dots,y_n) \quad \text{if and only if} \quad x_i \leq y_i \text{ for all } i,
	\]
	and define
	\[
	(x_1,\dots,x_n) \vee (y_1,\dots,y_n) = (x_1 \vee y_1,\dots,x_n \vee y_n),
	\]
	\[
	(x_1,\dots,x_n) \wedge (y_1,\dots,y_n) = (x_1 \wedge y_1,\dots,x_n \wedge y_n),
	\]
	\[
	(x_1,\dots,x_n)(y_1,\dots,y_n) = (x_1 y_1,\dots,x_n y_n).
	\]
	
	If each lattice $L_i$ possesses a multiplicative identity element $1_i$, then the element $(1_1,1_2,\dots,1_n)$ serves as the multiplicative identity of $L_1 \times \cdots \times L_n$.
	
	With these operations, $L_1 \times \cdots \times L_n$ becomes a multiplicative lattice.
\end{definition}

\begin{prop}
	Let $L_1$ and $L_2$ be two $c$-lattices and $S_1$, $S_2$ be the multiplicatively closed subsets of $L_1$, $L_2$ respectively. Let $S=S_1\times S_2$ be multiplicatively closed subset of  $L=L_1\times L_2$. Then the following statements holds:
	\begin{enumerate}
		\item\label{P2.11.1} $p_1$ is an $S_1$-prime element of $L_1$ if and only if $(p_1,1)$ is an $S$-prime element of $L$.
		\item\label{P2.11.2} $p_2$ is an $S_2$-prime element of $L_1$ if and only if $(1,p_2)$ is an $S$-prime element of $L$.
	\end{enumerate}
\end{prop}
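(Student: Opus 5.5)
I will prove (1) directly from the definition of $S$-prime; (2) follows by the symmetric argument with the coordinates swapped. In $L=L_1\times L_2$ with $S=S_1\times S_2$, everything is computed componentwise.

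\emph{Forward direction.} Assume $p_1$ is $S_1$-prime, witnessed by $s_1\in S_1$. First, $(p_1,1)$ is proper because $p_1\neq 1$. Next, if some $(t_1,t_2)\in S$ satisfied $(t_1,t_2)\le (p_1,1)$, then $t_1\le p_1$, which is impossible. Now take $(a_1,a_2),(b_1,b_2)\in L$ with $(a_1b_1,a_2b_2)\le (p_1,1)$. Only the first coordinate carries information, so $a_1b_1\le p_1$, and hence $s_1a_1\le p_1$ or $s_1b_1\le p_1$. Choose $s=(s_1,1)$; this lies in $S$ because $1\in S_2$ by the standing convention. Then $s(a_1,a_2)=(s_1a_1,a_2)\le(p_1,1)$ in the first case, and similarly in the second. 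The key point is that $s$ is chosen once, independently of $a,b$, as the definition requires.

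\emph{Converse.} Assume $(p_1,1)$ is $S$-prime, witnessed by $s=(s_1,s_2)$. Then $p_1\neq1$, since otherwise $(p_1,1)=1_L$. Also, if $t_1\le p_1$ for some $t_1\in S_1$, then $(t_1,1)\in S$ would lie below $(p_1,1)$, a contradiction. Given $a_1b_1\le p_1$, set $a=(a_1,1)$ and $b=(b_1,1)$, so that $ab=(a_1b_1,1)\le(p_1,1)$. It follows that $sa\le(p_1,1)$ or $sb\le(p_1,1)$, and reading off the first coordinate gives $s_1a_1\le p_1$ or $s_1b_1\le p_1$. Thus $s_1$ witnesses that $p_1$ is $S_1$-prime.

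I expect no real obstacle here. The only delicate points are checking that $1\in S_i$, so that $(s_1,1)\in S$ and $(t_1,1)\in S$, and making sure the witness $s$ is uniform in $a,b$. An alternative route is Lemma~\ref{L2.1}: the computation $((p_1,1):(s_1,s_2))=((p_1:s_1),1)$, combined with the standard fact that $(q,1)$ is prime in $L$ exactly when $q$ is prime in $L_1$, gives the same result. The direct argument avoids having to prove that fact, so I will use the direct one.
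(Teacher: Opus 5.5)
Your proof is correct and is essentially the paper's argument: the forward direction uses the single witness $(s_1,1)\in S$, and the converse tests $(a,1)(b,1)=(ab,1)\le(p_1,1)$ and reads off the first coordinate. Your explicit checks that properness and the condition $t\nleq p$ transfer between $p_1$ and $(p_1,1)$ (using $1\in S_2$), and that the witness is uniform in $a,b$, supply details the paper only states in passing.
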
    
\begin{proof} 
	(\ref{P2.11.1})	Note that , $t_1\nleq p_1$ for all $t_1\in S_1$ if and only if $(t_1,t_2)\nleq (p_1,1)$ for all $(t_1,t_2) \in S$. Also $t_2 \nleq p_2$ for all $t_2 \in S_2$ if and only if $(t_1,t_2) \nleq (1,p_2)$ for all $(t_1,t_2) \in S$. Let $(a_1,b_1), (a_2,b_2)\in L_1\times L_2$ such that $(a_1,b_1)(a_2,b_2)\leq (p_1,1)$. Then $a_1a_2\leq p_1$. Since $p_1$ is a $S_1$-prime element of $L_1$, there exists $s_1\in S_1$ such that $s_1a_1\leq p_1$ or $s_1a_2\leq p_1$. Hence $(s_1,1)(a_1,b_1)=(s_1a_1,b_1)\leq (p_1,1)$ or $(s_1,1)(a_2,b_2)=(s_1a_2,b_2)\leq (p_1,1)$. Thus, $(p_1,1)$ is an $S$-prime element of $L$.
	
	Conversely, suppose that $(p_1,1)$ is an $S$-prime element of $L$. Let $a,b\in L_1$ such that $ab\leq p_1$. Therefore $(a,1)(b,1)=(ab,1)\leq (p_1,1)$. Since $(p_1,1)$ is an $S$-prime element of $L$, there exists an element $(s_1,s_2)\in S$ such that $(s_1a,s_2)=(s_1,s_2)(a,1)\leq (p_1,1)$ or $(s_1b,s_2)=(s_1,s_2)(b,1)\leq (p_1,1)$. This implies $s_1a\leq p_1$ or $s_1b\leq p_1$. Thus $p_1$ is an $S_1$-prime element of $L_1$.
	
	Proof of 	(\ref{P2.11.2}) is same as that of  (\ref{P2.11.1}).
\end{proof}

\section{$S$-primary element}

\begin{defn}[\cite{SPJ2}]
	Let $S$ be a multiplicatively closed subset of a $c$-lattice $L$. A proper element $q$ of $L$ such that $t\nleq q$ for all $t\in S$ is called an S-primary element if there exists an element $s\in S$ such that for all $c,d\in L$ with $cd\leq q$, implies $sc\leq q$ or $sd\leq \sqrt q$.	
	
	Note that if $L$ is a $c$-lattice and $S=\{1\}$, then the primary elements of $L$ and the $S$-primary elements of $L$ coincide.
\end{defn}


\begin{prop}
	Let $S$ be a multiplicatively closed subset of a $c$-lattice $L$. Let $q$ be a primary element of $L$ such that $t \nleq q$ for all $t \in S$. Then for any $s \in S$, $sq$ is an $S$-primary element of $L$.
\end{prop}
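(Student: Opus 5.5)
The plan is to check the two defining conditions of an $S$-primary element for $sq$ directly: properness together with $t\nleq sq$ for all $t\in S$, and then the primary-type implication with an explicitly chosen witness from $S$.

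\emph{Properness and disjointness from $S$.} Since $sq\leq s\wedge q\leq q$, any $t\in S$ with $t\leq sq$ would satisfy $t\leq q$. This contradicts the hypothesis. Because $1\in S$, it follows that $sq\neq 1$, so $sq$ is proper.

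\emph{Witness and radical.} We take the witness $s'=s$ (if needed, any power of $s$). First we compare radicals. From $sq\leq q$ we get $\sqrt{sq}\leq\sqrt q$. For the converse direction, let $z\leq \sqrt q$ be compact, so $z^n\leq q$ for some $n$. Then $(sz)^{n}=s^nz^n\leq s q$, and therefore $sz\leq\sqrt{sq}$, using that $sz$ is compact since $L$ is a $c$-lattice. Taking joins over all such compact $z$, and using that multiplication distributes over joins, gives $s\sqrt q\leq \sqrt{sq}$.

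\emph{Main step.} Let $c,d\in L$ with $cd\leq sq$. Then $cd\leq q$, and since $q$ is primary, either $c\leq q$ or $d\leq\sqrt q$.
\begin{itemize}
\item If $c\leq q$, then $sc\leq sq$, which is the first alternative with witness $s$.
\item If $d\leq \sqrt q$, then $sd\leq s\sqrt q\leq\sqrt{sq}$ by the inclusion above, which is the second alternative.
\end{itemize}
So $s$ works uniformly for all $c,d$, and $sq$ is $S$-primary.

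The only step needing care is the inequality $s\sqrt q\leq\sqrt{sq}$. It relies on three facts: products of compact elements are compact, multiplication distributes over joins (so that $s\sqrt q=\bigvee sz$ over compact $z\leq\sqrt q$), and the identity $(sz)^n=s^nz^n\leq sq$, which uses $s^{n}\leq s$ and commutativity. All of these hold in a $c$-lattice, so the argument goes through without needing the finer equality $\sqrt{sq}=\sqrt q$.
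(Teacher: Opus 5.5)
Your proof is correct and follows the paper's argument: reduce $cd\le sq$ to $cd\le q$, use that $q$ is primary, and multiply by $s$. Your inequality $s\sqrt q\le\sqrt{sq}$ justifies the step $sd\le\sqrt{sq}$, which the paper asserts without comment, and you are right to avoid $\sqrt{sq}=\sqrt q$, since it fails in general (e.g.\ $q=(2)$, $s=(3)$ in $Id(\mathbb{Z})$, where $\sqrt{sq}=(6)$).
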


\begin{proof}
	Since $q$ is a primary element of $L$, $q$ is a proper element of $L$. Let any $s \in S$ and consider $sq$. As $sq \leq q$ and $t \nleq q$ for all $t \in S$, we have $t \nleq sq$ for all $t \in S$. Let $ c, d \in L$ such that $cd \leq sq \leq q$. As $q$ is a primary element of $L$, this gives $c \leq q$ or $d \leq \sqrt{q}$. Therefore $sc \leq sq$ or $sd \leq \sqrt{sq}$. This shows that $sq$ is an $S$-primary element of $L$.
\end{proof}

\begin{prop}
	Let $S$ be a multiplicatively closed subset of a $c$-lattice $L$.	If $q$ is an $S$-primary element of $L$, then $\sqrt q$ is an $S$-prime element of $L$.
\end{prop}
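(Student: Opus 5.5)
We need $\sqrt q$ proper with $t\nleq\sqrt q$ for all $t\in S$, and an $s$ witnessing the $S$-prime condition. We use $s$ itself as the witness.

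\emph{Avoidance.} Suppose $t\le\sqrt q$ for some $t\in S$. Since $t\in S\subseteq L_*$ is compact and $\sqrt q$ is the join of compact $x$ with $x^n\le q$, compactness gives finitely many such $x_1,\dots,x_k$ with $t\le x_1\vee\cdots\vee x_k$. In a $c$-lattice, multiplication distributes over joins, so $(x_1\vee\cdots\vee x_k)^N\le q$ for large $N$ (e.g. $N=\sum n_i$) by expanding the product. Then $t^N\le q$ with $t^N\in S$, contradicting $t'\nleq q$ for all $t'\in S$. Since $1\in S$, it follows that $\sqrt q\neq 1$.

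\emph{The $S$-prime condition.} Let $s\in S$ witness that $q$ is $S$-primary. Take $a,b\in L$ with $ab\le\sqrt q$. Since $L$ is compactly generated, I first treat compact $a,b$ and then pass to general elements. For compact $a,b$, the product $ab$ is compact because $L$ is a $c$-lattice. As in the avoidance step, $ab\le\sqrt q$ then gives $(ab)^n=a^nb^n\le q$ for some $n$. Applying the $S$-primary condition with $c=a^n$, $d=b^n$ yields $sa^n\le q$ or $sb^n\le\sqrt q$.
\begin{itemize}
\item In the first case, $(sa)^n=s^{n-1}sa^n\le q$, so $sa\le\sqrt q$; here we use that $sa$ is compact.
\item In the second case, $(sb)^n\le sb^n\le\sqrt q$. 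Using $\sqrt{\sqrt q}=\sqrt q$ (standard in $c$-lattices), we get $sb\le\sqrt q$.
\end{itemize}

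\emph{Main obstacle.} The hardest step is extending from compact $a,b$ to arbitrary ones, because the choice between the two cases may depend on the compact pieces. The fix is the standard prime-avoidance argument. Suppose $sa\nleq\sqrt q$ and $sb\nleq\sqrt q$. Since $sa=\bigvee\{sx\mid x\le a,\ x\in L_*\}$, there is a compact $x\le a$ with $sx\nleq\sqrt q$, and similarly a compact $y\le b$ with $sy\nleq\sqrt q$. But $xy\le ab\le\sqrt q$, so the compact case gives $sx\le\sqrt q$ or $sy\le\sqrt q$, a contradiction. Hence $\sqrt q$ is $S$-prime with the same $s$.

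Alternatively, one could invoke Lemma \ref{L2.1} and show that $(\sqrt q:s)$ is prime; the argument above proves this along the way.
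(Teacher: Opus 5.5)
Your proof is correct and uses the same core argument as the paper: pass from $cd\le\sqrt q$ to $c^nd^n\le q$, apply the $S$-primary condition to $c^n,d^n$, and return to $\sqrt q$ via $(sc)^n\le q$ or $(sd)^n\le\sqrt q$. The paper applies this directly to arbitrary $c,d$ and never checks $t\nleq\sqrt q$; since $cd\le\sqrt q\Rightarrow (cd)^n\le q$ is only guaranteed when $cd$ is compact, your reduction to compact $a,b$ (via $sa=\bigvee\{sx\mid x\le a,\ x\in L_*\}$) and your avoidance argument supply exactly the details the paper's proof omits.
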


\begin{proof} 
	Let $c,d \in L$ such that $cd \leq \sqrt q$. Then $c^n d^n \leq q$ for some positive integer $n$. Since $q$ is an $S$-primary element of $L$,  there exists $s\in S$ such that $s c^n\leq q$ or $sd^n\leq \sqrt q$. Hence, $(sc)^n\leq q$ or $(sd)^n\leq \sqrt q$. Therefore, $sc\leq \sqrt q$ or $sd\leq \sqrt q$. Thus, $\sqrt q$ is an $S$-prime element of $L$.
\end{proof}	

\begin{prop}\label{P3.4} 
	Let $S$ be a multiplicatively closed subset of a $c$-lattice $L$. If $q$ is a proper element of $L$ such that $t\nleq q$ for all $t\in S$, then the following statements are equivalent:
	\begin{enumerate}
		\item $q$ is an $S$-primary element of $L$.
		
		\item $(q:s)$ is a primary element of $L$ for some $s\in S$.
	\end{enumerate}
\end{prop}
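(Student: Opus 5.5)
We prove the two implications separately, working throughout with compact elements and using that $L$ is a $c$-lattice. Compact generation lets us test inequalities on compact elements. Products of compact elements are compact. Multiplication distributes over joins, so $sd=\bigvee\{sx \mid x\in L_*,\ x\le d\}$. We also use that $(q:s)\le (q:s)$ gives $s(q:s)\le q$ (the Remark). Both implications use the same $s$.

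\emph{(2)$\Rightarrow$(1).} Assume $(q:s)$ is primary for some $s\in S$, and let $cd\le q$. Since $q\le (q:s)$, primeness-type reasoning for the primary element $(q:s)$ gives $c\le (q:s)$ or $d\le\sqrt{(q:s)}$.
\begin{itemize}
\item If $c\le (q:s)$, then $sc\le q$.
\item Otherwise, take a compact $x\le d$. Then $x^n\le (q:s)$ for some $n$, so $sx^n\le q$. Hence $(sx)^n=s^{n-1}\cdot sx^n\le q$. Since $sx$ is compact, $sx\le\sqrt q$. Taking the join over all such $x$ gives $sd\le\sqrt q$.
\end{itemize}
So $q$ is $S$-primary with the same $s$.

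\emph{(1)$\Rightarrow$(2), a key auxiliary fact.} Let $s\in S$ witness that $q$ is $S$-primary. We first prove:

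\emph{Claim.} If $a\in L$, $u\in S$ and $au\le q$, then $sa\le q$.

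Apply the $S$-primary condition to the product $a\cdot u\le q$. It gives $sa\le q$ or $su\le\sqrt q$. The second alternative is impossible. Indeed, $su\in S$ is compact, so $su\le\sqrt q$ would give $(su)^m\le q$ for some $m$. But $(su)^m\in S$, and this contradicts $t\nleq q$ for all $t\in S$.

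\emph{(1)$\Rightarrow$(2), the argument.}
\begin{itemize}
\item \emph{Properness.} $(q:s)\neq 1$, since $(q:s)=1$ would force $s=s\cdot 1\le q$.
\item \emph{Setup.} Let $cd\le (q:s)$, so $c(sd)\le q$. The $S$-primary property gives $sc\le q$ or $s^2d\le\sqrt q$.
\item \emph{First case.} If $sc\le q$, then $c\le (q:s)$.
\item \emph{Second case.} Let $x\le d$ be compact. Then $s^2x\le\sqrt q$, and $s^2x$ is compact, so $(s^2x)^n\le q$ for some $n$. That is, $x^n\cdot s^{2n}\le q$ with $s^{2n}\in S$. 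The Claim gives $sx^n\le q$, i.e.\ $x^n\le (q:s)$, so $x\le\sqrt{(q:s)}$.
\item \emph{Conclusion.} Since $L$ is compactly generated, $d\le\sqrt{(q:s)}$. Hence $(q:s)$ is primary.
\end{itemize}

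\emph{Main obstacle.} The delicate point is this last step. Naively, $sd\le\sqrt q$ only yields $s^nx^n\le q$, whereas we need $sx^n\le q$; the extra powers of $s$ must be absorbed. The Claim, which applies the $S$-primary condition with an element of $S$ as one factor, is exactly what removes them. We expect this to be the crux, and the remaining steps are routine manipulations of residuals and radicals.
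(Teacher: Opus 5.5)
Your proof is correct and follows essentially the same route as the paper: the same witness $s$ works in both directions, and your Claim is exactly the paper's device of reapplying the $S$-primary condition with a power of $s$ as one factor, ruling out the branch $s^{k}\le\sqrt q$ via $t\nleq q$, so that the extra powers of $s$ are absorbed. Your version is slightly more careful in two places. Splitting the product as $c(sd)$ makes the first case immediate. Passing to compact $x\le d$ justifies the step from an inequality $\le\sqrt{q}$ (resp.\ $\le\sqrt{(q:s)}$) to one on an $n$-th power, which the paper asserts directly for arbitrary, possibly non-compact, $b$ and $y$, where it need not hold.
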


\begin{proof} 
	$(1)\implies (2)$: Suppose $q$ is an $S$-primary element of $L$. Therefore, for all $c,d\in L$ with $cd\leq q$, there exists $s\in S$ such that $sc\leq q$ or $sd\leq \sqrt q$. For this element $s$, we claim that $(q:s)$ is a primary element of $L$.
	
	Let $a,b \in L$ such that $ab \leq (q:s)$. Then $sab \leq q$. Since $q$ an $S$-primary element of $L$, we have $s^2a \leq q$ or $sb\leq \sqrt q$. If $s^2a\leq q$, then $s^3\leq \sqrt q$ or $sa\leq q$. Since $t\nleq q$ for all $t\in S$, we could not have $s^3\leq \sqrt{q}$. Therefore  $s^3\nleq \sqrt q$. Hence, $sa\leq q$. This gives $a \leq (q:s)$. If $sb \leq \sqrt q$, then $s^nb^n\leq q$ for some positive integer $n$. Therefore $s^{n+1}\leq \sqrt q$ or $sb^n\leq q$. Since $t\nleq q$ for all $t\in S$, we could not have $s^{n+1} \leq \sqrt q$. Therefore $s^{n+1}\nleq \sqrt q$. Hence, $sb^n \leq q$. This gives $b \leq \sqrt{(q:s)}$.  Thus, $a \leq (q:s)$ or $b\leq \sqrt{(q:s)}$. This proves that $(q:s)$ is a primary element of $L$.
	
	$(2)\implies (1)$: Suppose $(q:s)$ is a primary element of $L$ for some $s\in S$. We have to show that $q$ is an $S$-primary element of $L$. By assumption $q$ is a proper element of $L$ such that $t\nleq q$ for all $t\in S$.   Now, let $x,y\in L$ such that $xy\leq q$. As $q \leq (q:s)$, we get  $xy\leq (q:s)$. Since $(q:s)$ is a primary element of $L$, $x \leq (q:s)$ or $y\leq \sqrt {(q:s)}$. This implies $sx\leq q$ or $sy^n\leq q$ for some positive integer $n$. Hence, $sx\leq q$ or $(sy)^n\leq q$ for some positive integer $n$. Therefore, $sx\leq q$ or $sy\leq \sqrt q$. This proves that $q$ is an $S$-primary element of $L$.
\end{proof}

\begin{prop}
	Let $S$ be a multiplicatively closed subset of a $c$-lattice $L$ and $q$ is a proper element of $L$ such that $t \nleq q$ for all  $t\in S$. Consider the set $\bar{S} = \{s \vee q \;|\; s \in S \}$. If $Z(L/q) \cap \bar{S} = \emptyset$, then $q$ is an $S$-primary element of $L$ if and only if  $q$ is  a primary element of $L$.
\end{prop}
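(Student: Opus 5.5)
The proof will mirror the argument of Proposition \ref{L2.5}, with Proposition \ref{P3.4} playing the role of Lemma \ref{L2.1}. One direction is immediate. If $q$ is primary and $t \nleq q$ for all $t \in S$, then take $s = 1 \in S$: whenever $cd \leq q$ we get $c \leq q$ or $d \leq \sqrt q$, hence $1\cdot c \leq q$ or $1\cdot d \leq \sqrt q$, so $q$ is $S$-primary.

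For the converse, assume $q$ is $S$-primary. By Proposition \ref{P3.4} there is $s \in S$ such that $(q:s)$ is a primary element of $L$. It then suffices to show $(q:s) = q$ for this $s$; in fact the argument gives $(q:s)=q$ for every $s \in S$. The inequality $q \leq (q:s)$ is clear, since $sq \leq q$.

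For the reverse inequality, take any $x \leq (q:s)$. Then $sx \leq q$, and in $L/q$ we compute
\[
(s \vee q)\circ(x \vee q) = (s\vee q)(x\vee q) \vee q = sx \vee sq \vee qx \vee q^2 \vee q = q,
\]
using $sx \leq q$ together with $sq,\ qx,\ q^2 \leq q$. Since $s \nleq q$, we have $s \vee q \neq q$, so $s \vee q$ is a nonzero element of $L/q$ lying in $\bar S$. The hypothesis $Z(L/q)\cap \bar S = \emptyset$ says $s\vee q$ is not a zero-divisor of $L/q$. By the definition of $Z(L/q)$ this forces $x \vee q = q$, i.e.\ $x \leq q$.

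Applying this with $x = (q:s)$ gives $(q:s) \leq q$. Hence $q = (q:s)$ is primary.

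The only point needing care is the computation $(s\vee q)\circ(x\vee q)=q$. It uses distributivity of the multiplication over finite joins, which holds in a multiplicative lattice (hence in a $c$-lattice). The remaining steps are purely formal.
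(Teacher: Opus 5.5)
Your proof is correct and is the argument the paper intends; its proof simply says ``similar to the proof of Proposition \ref{L2.5}''. You show $(q:s)=q$ for every $s\in S$ using the non-zero-divisor hypothesis on $s\vee q$ in $L/q$, and then invoke Proposition \ref{P3.4} in place of Lemma \ref{L2.1}.
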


\begin{proof} proof is similar to the proof of Proposition \ref{L2.5}.
\end{proof}	

\begin{prop}\label{P3.6} 
	Let $S$ be a multiplicatively closed subset of a $c$-lattice $L$ and $i, q $ be any element of $L$ such that  $i \in L_*$ and $i \leq q$. Then $q$ is an $S$-primary element of $L$ if and only if $q$ is an $\bar{S}$-primary element of $L/i$, where $\bar{S} = \{s \vee i \;|\; s\in S\}$.
\end{prop}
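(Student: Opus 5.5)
We follow the structure of the proof of the analogous statement for $S$-prime elements (the proposition preceding Lemma \ref{L2.8}), adding one ingredient: a comparison of radicals in $L$ and in $L/i$. As there, $\bar{S}$ is a multiplicatively closed subset of $(L/i)_*$. Indeed, $s\vee i$ is compact as a join of two compact elements, and $(s_1\vee i)\circ(s_2\vee i)=s_1s_2\vee i\in\bar S$. Also $t\vee i\le q$ forces $t\le q$, so $t'\nleq q$ for every $t'\in\bar S$ if and only if $t\nleq q$ for every $t\in S$.

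The key preliminary step is to show that the radical of $q$ computed in $L/i$, which we denote $\sqrt{q}^{\,L/i}$, equals $\sqrt q$ (computed in $L$). Since $i\le q\le\sqrt q$, we have $\sqrt q\in L/i$. Powers in $L/i$ satisfy $x^{\circ n}=x^n\vee i$, so for $x\ge i$ we get $x^{\circ n}\le q$ if and only if $x^n\le q$. To compare the two radicals we compare their compact generators. A compact element of $L/i$ below $\sqrt{q}^{\,L/i}$ lies below $\sqrt q$. Conversely, for a compact $z\in L$ with $z\le\sqrt q$, the element $z\vee i$ is compact in $L/i$ and satisfies $(z\vee i)^{\circ n}\le z^n\vee i\le q$. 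Taking joins gives equality. In fact we only need the inequalities that the argument below actually uses, namely: if $y\ge i$ and $y\le\sqrt{q}^{\,L/i}$ then $y\le\sqrt q$, and conversely. Establishing these cleanly is the main obstacle, because compactness in $L/i$ must be related to compactness in $L$. We will use the facts that $L/i$ is a $c$-lattice whose compact elements are of the form $c\vee i$ with $c\in L_*$ (this is where $i\in L_*$ is needed), and that $x\circ y\ge xy$.

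For the forward direction, let $x,y\in L/i$ with $x\circ y\le q$. Then $xy\le q$, so there is $s\in S$ with $sx\le q$ or $sy\le\sqrt q$. Since $i\le q$, this gives $(s\vee i)\circ x=sx\vee ix\vee i\le q$. In the other case, $(s\vee i)\circ y=sy\vee iy\vee i\le\sqrt q=\sqrt{q}^{\,L/i}$, using $iy\le i\le\sqrt q$. Hence $q$ is $\bar S$-primary in $L/i$.

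For the converse, let $c,d\in L$ with $cd\le q$. Then $(c\vee i)\circ(d\vee i)=cd\vee ci\vee di\vee i^2\vee i\le q$, so there is $s\vee i\in\bar S$ with $(s\vee i)\circ(c\vee i)\le q$ or $(s\vee i)\circ(d\vee i)\le\sqrt{q}^{\,L/i}=\sqrt q$. Expanding the products gives $sc\le q$ or $sd\le\sqrt q$ respectively. Therefore $q$ is $S$-primary in $L$.
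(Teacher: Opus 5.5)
Your proof is correct and follows the same route as the paper's. Both arguments check that $\bar S$ is multiplicatively closed and avoids $q$, then use the expansions $(s\vee i)\circ x=sx\vee ix\vee i$ and $(c\vee i)\circ(d\vee i)=cd\vee i$ for the two directions; your one addition is the explicit check that $L/i$ is a $c$-lattice and that the radical of $q$ computed in $L/i$ equals $\sqrt{q}$ computed in $L$ (via $x^{\circ n}=x^n\vee i$ and $(L/i)_*=\{c\vee i \mid c\in L_*\}$), which the paper uses tacitly when it writes $\sqrt{q}$ in both lattices.
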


\begin{proof}  Firstly, we will show that $\bar{S}$ is a multiplicatively closed subset of $L/i$. Since $i$ is compact and every $s \in S$ is compact, $s \vee i$ is compact for every $s \in S$. Therefore $\bar{S} \subseteq  (L/i)_*$. Let $s_1 \vee i, ~ s_2 \vee i \in \bar{S} $. therefore $(s_1 \vee i) \circ (s_2 \vee i) = s_1 s_2 \vee s_1 i \vee s_2 i \vee i^2 \vee i = s_1 s_2 \vee  i $. This gives $(s_1 \vee i) \circ (s_2 \vee i) \in \bar{S}$. Hence $\bar{S}$ is multiplicatively closed subset of $L/i$. We claim that $t' \nleq q$ for all $t' \in \bar{S}$. Suppose on the contrary that $t'_1 \leq q$ for some $t'_1 \in \bar{S}$. Therefore $t_1 \vee i \leq q$, where $t'_1 = t_1 \vee i$ and $t_1 \in S$. This gives $t_1 \leq q$, as $q$ is an $S$-primary element we will get a contradiction to  $t \leq q$ for all $t \in S$. Hence  $t' \nleq q$ for all $t' \in \bar{S}$.  Let $x, y \in L/i$ with $x \circ y \leq q$. Therefore    $x \circ y  = xy \vee i \leq q$. Since $i \leq q$, we get $xy \leq q$. As $q$ is an $S$-primary element of $L$, there exists $s \in S$ such that $xs \leq q$ or $ys \leq \sqrt{q}$.  Since $sx \leq q$ and $i \leq q$, we get $(s \vee i) x = sx \vee ix \leq q$.  Also as  $ys \leq \sqrt{q}$ and $i \leq q \leq \sqrt{q}$, we get $(s \vee i) y = sy \vee iy \leq \sqrt{q}$.  Therefore $q$ is an $\bar{S}$-primary element of $L/i$.
	
Conversely, suppose that  $q$ is an $\bar{S}$-primary element of $L/i$. So Clearly, $t \nleq q$ for all $t \in S$.  Let $c, d \in L$ such that $ cd \leq q$. Since $i \leq q$, we get $(c \vee i ) (d \vee i) = cd \vee ci \vee di \vee i^2 \leq q$. As $(c \vee i), (d \vee i) \in L/i$ and $q$ is an $\bar{S}$-primary element of $L/i$.  there exists $s \vee i \in \bar{S}$ such that $(s \vee i) (c \vee i) \leq q$ or $(s \vee i) (d \vee i) \leq \sqrt{q}$. This gives $sc \vee si \vee ci \vee i^2 \leq q$ or $sd \vee si \vee di \vee i^2 \leq \sqrt{q}$. As $i \leq q \leq \sqrt{q}$, we get $ sc \vee si \vee ci \vee i^2 \vee i = sc \vee i  \leq q$ or $ sd \vee si \vee di \vee i^2 \vee i  = sd \vee i \leq \sqrt{q} $. Therefore $sc \leq sc \vee i \leq q $ or $sd \leq sd \vee i \leq \sqrt{q}$. Hence $q$ is an $S$-primary element of $L$.
\end{proof}

\begin{prop}
	Let $S$ be a multiplicatively closed subset of a $c$-lattice $L$ and $q\in L$ such that $t\nleq q$ for all $t\in S$. Then $q$ is an $S$-primary element of $L$ if and only if there exists $s\in S$ such that for all $i_1,i_2,\dots,i_n\in L$ with $i_1i_2\cdots i_n\leq q $, $si_j\leq q$ or $si_k\leq \sqrt{q}$ for some $j,k\in \{1,2,\dots,n\}$.
\end{prop}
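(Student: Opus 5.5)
The reverse implication is immediate: take $n=2$. Given $cd\leq q$, the hypothesis produces $j,k\in\{1,2\}$ with $si_j\leq q$ or $si_k\leq\sqrt q$, where $i_1=c$, $i_2=d$. If $sd\leq q$, then $sd\leq\sqrt q$ since $q\leq\sqrt q$. If $sc\leq\sqrt q$, then $(sc)d\leq q$, and I would pass to the primary element $(q:s)$ to rearrange the roles. A cleaner route is to read the statement symmetrically: applying it to the product $d\cdot c$ lets us swap which factor goes into $q$ and which into $\sqrt q$. Either way, for $n=2$ the condition together with $t\nleq q$ for all $t\in S$ gives the defining property of an $S$-primary element.

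For the forward implication I would use Proposition \ref{P3.4}: since $q$ is $S$-primary, there is $s\in S$ such that $p:=(q:s)$ is a primary element of $L$. I will also use two standard facts about primary elements in a $c$-lattice: $\sqrt{p}$ is prime, and $\sqrt{(q:s)}\leq(\sqrt q:s)$. The second holds because a compact $x$ with $x^m s\leq q$ gives $(sx)^m\leq s^m x^m\leq q$, so $sx\leq\sqrt q$, and one then passes to joins of compacts.

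Now let $i_1\cdots i_n\leq q\leq p$. I would induct on $n$, or argue directly, splitting into two cases.
\begin{enumerate}
\item Some $i_k\leq\sqrt p$. Then $si_k\leq\sqrt q$ by the fact above.
\item No $i_k\leq\sqrt p$. Since $\sqrt p$ is prime, the product $i_2\cdots i_n\nleq\sqrt p$. Writing $i_1(i_2\cdots i_n)\leq p$ and using that $p$ is primary gives $i_1\leq p=(q:s)$, so $si_1\leq q$.
\end{enumerate}
In either case the required $j$ or $k$ exists, with the same $s$ working for every $n$.

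The main obstacle is justifying the radical facts inside a general $c$-lattice: that $\sqrt p$ is prime for primary $p$, and the inequality $\sqrt{(q:s)}\leq(\sqrt q:s)$. Both need compact generation and $1$-compactness. I would state them as short lemmas, arguing via compact elements below each element and using that products of compacts are compact.
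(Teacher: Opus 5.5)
The gap is in your reverse implication, in exactly the case you flag. For $n=2$ the hypothesis may be witnessed only by $sc\leq\sqrt q$, with $sc\nleq q$ and $sd\nleq\sqrt q$, and then the defining condition for the ordered pair $(c,d)$ is not obtained.

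Neither of your remedies closes this:
\begin{enumerate}
\item The first is circular: by Proposition~\ref{P3.4}, primariness of $(q:s)$ is equivalent to $q$ being $S$-primary, which is what you want to prove.
\item The second yields nothing new. Since $j$ and $k$ range independently over all indices, the hypothesis is invariant under permuting the factors. Applied to $d\cdot c$ it returns the same disjunction, whereas the definition has to hold for $(c,d)$ as well as for $(d,c)$.
\end{enumerate}

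In fact no argument can close it. Because $si_j\leq q$ implies $si_j\leq\sqrt q$, the stated condition only says that some $si_k\leq\sqrt q$, and that does not force $S$-primariness. Take $L=Id(k[x,y])$ with $k$ a field, $S=\{(1)\}$ and $q=(x^2,xy)$. The radical $\sqrt q=(x)$ is prime and contains $q$, so every product of ideals contained in $q$ has a factor contained in $\sqrt q$, and the condition holds with $s=(1)$. But $(x)(y)\subseteq q$ while $(x)\nsubseteq q$ and $(y)\nsubseteq(x)$, so $q$ is not $S$-primary (i.e.\ not primary). The paper's own proof only establishes the forward implication, so it does not supply this direction either.

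Your forward implication is correct and takes a different route from the paper. The paper inducts on $n$ straight from the definition, peeling off $i_n$ and discarding extra powers of $s$ via $s^3\nleq\sqrt q$. You instead pass once to the primary element $p=(q:s)$ and use that $\sqrt p$ is prime and $s\sqrt{(q:s)}\leq\sqrt q$. This gives one $s$ for all $n$ with no bookkeeping.

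Your Case~2 is vacuous, though: $i_1\cdots i_n\leq\sqrt p$ with $\sqrt p$ prime already forces some $i_k\leq\sqrt p$. That is another sign the stated condition is too weak.

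Your method does prove the natural corrected statement: for each $j$, $si_j\leq q$ or $si_k\leq\sqrt q$ for some $k\neq j$. If $si_j\nleq q$ then $i_j\nleq p$, so primariness of $p$ gives $\prod_{k\neq j}i_k\leq\sqrt p$, and primeness of $\sqrt p$ gives the required $k$. The converse of that version is just the case $n=2$.
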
	
\begin{proof} 
	Suppose $q$ is an $S$-primary element of $L$. Let $i_1,i_2,\dots,i_n\in L$ with $i_1i_2\cdots i_n\leq q $. We have to prove that $si_j\leq q$ or $si_k\leq \sqrt{q}$ for some $j,k\in \{1,2,\dots,n\}$. We prove this result by induction on $n$, the number of $i_j$. For $n=1$, the result is obvious. For $n=2$, the result follows from the definition of an $S$-primary element. Let $n\geq 3$ and assume that the result holds for number $(n-1)$. Then by the definition of an $S$-primary element, there exists $s\in S$ such that $s(i_1 i_2\cdots i_{n-1})\leq q$ or $si_n\leq \sqrt{q}$. Therefore $si_n\leq \sqrt{q}$ or $s^2 i_j \leq q$ or $s i_k \leq \sqrt{q}$ for some $j,k \in \{1,2,\cdots ,n-1\}$. If $s^2 i_j \leq q$, we get $s i_j\leq q $ or $ s^3\leq \sqrt{q}$. Since $t \nleq q$ for all $t \in S$, we could not have $s^3 \leq \sqrt{q}$. Hence  $s i_j \leq q$.
\end{proof}

We defined $S$-stationary property for sequence of elements of $V$-lattice $L$ in \cite{SPJ2}.

\begin{defn}[\cite{SPJ2}]
	Let $S$ be a multiplicatively closed subset of $V$-lattice $L$. An ascending sequence of elements $i_1 \leq i_2 \leq i_3 \leq \cdots \leq i_m  \leq \cdots $ is called \textit{$S$-stationary} if there exist $s \in S$ and $ n \in \mathbb{Z}^{+}$ such that $s  i_m \leq i_n$ for all $m \geq n$. 
	
	Let $S$ be a multiplicatively closed subset of $V$-lattice $L$, then  $L$ is said to be satisfies $S$-stationary property, if every ascending chain of elements is $S$-stationary.	
\end{defn}

\begin{prop}
	Let $S$ be a multiplicatively closed subset of a $c$-lattice $L$ and $q\in L$ such that $t\nleq q$ for all $t\in S$. Consider the following statements:
	\begin{enumerate}
		\item $q$ is an $S$-primary element of $L$.
		\item $(q:s)$ is a primary element of $L$ for some $s\in S$.
		\item The ascending chain of elements $(q:sl) \leq (q:sl^2) \leq (q:sl^3) \leq \cdots $ is stationary for some $s \in S$ and for all $l \in L$.
		\item The ascending chain of elements $(q:l) \leq (q:l^2) \leq (q:l^3) \leq \cdots $ is $S$-stationary for all $l \in L$.
	\end{enumerate}
	Then $(1)\implies (2)\implies (3) \implies (4)$.
\end{prop}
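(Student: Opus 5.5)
The implication $(1)\implies(2)$ is exactly Proposition \ref{P3.4}, so I only need $(2)\implies(3)$ and $(3)\implies(4)$.

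For $(2)\implies(3)$, fix $s\in S$ with $p:=(q:s)$ primary, and fix $l\in L$. I first note the identity
\[
(q:sl^k)=((q:s):l^k)=(p:l^k).
\]
This holds because $x\,sl^k\le q$ iff $(xl^k)s\le q$ iff $xl^k\le(q:s)$, using the Remark. So the chain in (3) is the chain $(p:l)\le(p:l^2)\le\cdots$. It is ascending, since $xl^k\le p$ gives $xl^{k+1}\le xl^k\le p$.

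I then split into two cases.

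If $l\nleq\sqrt p$, primality gives $(p:l^k)=p$ for every $k$. Indeed, $xl^k\le p$ forces $x\le p$ or $l^k\le\sqrt p$, and the latter would give $l\le\sqrt p$. So the chain is constant.

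If $l\le\sqrt p$, I want some $n$ with $l^n\le p$, so that $(p:l^m)=1$ for $m\ge n$. This is the main obstacle. For compact $l$ it follows from the definition of $\sqrt p$. For arbitrary $l$, the element $\sqrt p$ need not be nilpotent modulo $p$ without a Noetherian-type hypothesis. I would try first to reduce to the case where $l$ is compact; if that fails, I would treat $L$ as implicitly satisfying that $l\le\sqrt p$ implies $l^n\le p$ for some $n$, which is what the authors appear to use. In the ring case $Id(R)$ this is the step that needs finite generation.

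For $(3)\implies(4)$, take the $s$ given by (3) and any $l\in L$. Stationarity gives $n$ with $(q:sl^m)=(q:sl^n)$ for all $m\ge n$. Let $x\le(q:l^m)$. Then
\[
(sx)\,sl^m\le s\,xl^m\le q,
\]
and more simply $x\le(q:l^m)\le(q:sl^m)=(q:sl^n)$, so $x\,sl^n\le q$, that is, $sx\le(q:l^n)$. Taking $x=(q:l^m)$ gives $s(q:l^m)\le(q:l^n)$ for all $m\ge n$. This is exactly $S$-stationarity of the chain $(q:l)\le(q:l^2)\le\cdots$, with the same $s$ working for every $l$.
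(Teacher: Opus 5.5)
Your argument follows the paper's proof step for step. You get (1)$\Rightarrow$(2) from Proposition~\ref{P3.4}. For (2)$\Rightarrow$(3) you use the same split into $l\nleq\sqrt{(q:s)}$, where the chain is constantly $(q:s)$, and $l\le\sqrt{(q:s)}$. For (3)$\Rightarrow$(4) you use the same computation $x\le(q:l^m)\le(q:sl^m)=(q:sl^n)$, giving $s(q:l^m)\le(q:l^n)$. The step you flagged is exactly where the paper simply writes ``therefore $sl^k\le q$ for some positive integer $k$''. It is a genuine gap, and it cannot be closed, because (2)$\Rightarrow$(3) is false for non-compact $l$. So your plan to ``reduce to the case where $l$ is compact'' cannot succeed.

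Here is a counterexample. Take a field $k$ and $R=k[x_1,x_2,\dots]/J$, with $J$ generated by all $x_i^{i+1}$ and all $x_ix_j$ ($i\neq j$). Let $L=Id(R)$, $S=\{R\}$, $q=0$ and $m=(x_1,x_2,\dots)$. Every element of $m$ is nilpotent and every element outside $m$ is a unit. Hence $0$ is primary and $\sqrt{0}=m$, so (1) and (2) hold with $s=1=R$. Now $m^{k}=(x_j^{k}\mid j\ge 1)$. We have $x_{k+1}\in(0:m^{k+1})$, because $x_{k+1}x_j^{k+1}=0$ for every $j$ (use $x_ix_j=0$ for $j\neq k+1$ and $x_{k+1}^{k+2}=0$). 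On the other hand $x_{k+1}\cdot x_{k+1}^{k}=x_{k+1}^{k+1}\neq 0$, so $x_{k+1}\notin(0:m^k)$. Thus $(0:m)<(0:m^2)<\cdots$ is strictly increasing. Both (3) and (4) therefore fail for $l=m$, since with $S=\{R\}$ ``$S$-stationary'' just means stationary.

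So the repair has to change the statement. Either quantify over $l\in L_*$ in (3) and (4), or add explicitly the hypothesis you mention, a Noetherian-type condition guaranteeing $l\le\sqrt{p}\Rightarrow l^n\le p$. The paper's proof tacitly uses this. With $l$ compact your proof is complete. From $l\le\sqrt{(q:s)}$ you get $l\le x_1\vee\cdots\vee x_r$ with $x_i\in L_*$ and $x_i^{n_i}\le(q:s)$. Expanding gives $l^N\le(q:s)$ for $N\ge n_1+\cdots+n_r$, so $(q:sl^m)=1$ for all $m\ge N$. The other case and (3)$\Rightarrow$(4) go through unchanged.
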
	

\begin{proof} 
	
	$(1) \implies (2)$ Follows from the Proposition \ref{P3.4}.
	
	$(2)\implies (3)$ Let $l \in L$. Suppose that $l \nleq \sqrt{(q:s)}$. We claim that $(q:s) = (q:sl^n)$. Let $x \in L_*$ such that $x \leq (q:sl^n)$. Then $sxl^n\leq q$. Hence $xl^n\leq (q:s)$. As $(q:s)$ is primary, $x\leq (q:s)$ or $l^n\leq \sqrt{(q:s)}$. Therefore $x\leq (q:s)$ or $l \leq \sqrt{(q:s)}$. But $l \nleq \sqrt{(q:s)}$. Hence $x\leq (q:s)$. Thus every compact element $\leq (q:sl^n)$ is $\leq (q:s) $ This gives $(q:sl^n)\leq (q:s)$. Let $y \in L_{*}$ such that $y\leq (q:s)$. Then $ys\leq q$. Therefore $ysl^n\leq q$. This implies $y\leq (q:sl^n)$. So every compact element $\leq (q:s) $  is  $\leq (q:sl^n)$. This gives $(q:s) \leq (q:sl^n)$. Thus, $(q:s) = (q:sl^n)$. 
	\par Suppose that $l \leq \sqrt{(q:s)}$. Therefore $sl^k\leq q$ for some positive integer $k$. Hence for all $j\geq k$, $sl^j\leq q$. Thus $(q:sl^j)=1$. This proves the ascending chain of elements $(q:sl) \leq (q:sl^2) \leq (q:sl^3) \leq \cdots $ is stationary for some $s \in S$ and for all $l \in L$.
	
	$(3)\implies (4)$ Let $l \in L$. Then there exists positive integer $n$ such that for all $j \geq n$, $(q:sl^j)=(q:sl^n)$. We claim that $s(q:l^j)\leq (q:l^n)$ for all $j\geq n$. Let $x\leq (q:l^j)$. Then $xl^j\leq q$. Therefore $sxl^j\leq q$. This implies $x\leq (q:sl^j)$. Therefore $x\leq (q:sl^n)$. This gives $sxl^n\leq q$. Hence $sx\leq (q:l^n)$. Thus $s(q:l^j)\leq (q:l^n)$. This proves the ascending chain of elements $(q:l) \leq (q:l^2) \leq (q:l^3) \leq \cdots $ is $S$-stationary for all $l \in L$.
\end{proof}	

\begin{prop}
	Let $L_1$ and $L_2$ be two $c$-lattices and $S_1$, $S_2$ be the multiplicatively closed subsets of $L_1$, $L_2$ respectively. Let $S=S_1\times S_2$ be multiplicatively closed subset of  $L=L_1\times L_2$. Then:
	\begin{enumerate}
		\item\label{P3.10.1} $p_1$ is an $S_1$-primary element of $L_1$ if and only if $(p_1,1)$ is an $S$-primary element of $L$.
		\item\label{P3.10.2} $p_2$ is an $S_2$-primary element of $L_1$ if and only if $(1,p_2)$ is an $S$-primary element of $L$.
	\end{enumerate}
\end{prop}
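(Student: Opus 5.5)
We will follow the proof of the analogous statement for $S$-prime elements (the proposition on $(p_1,1)$ in Section 2) and handle the radical separately. The one new ingredient is the identity $\sqrt{(p_1,1)}=(\sqrt{p_1},1)$ in $L=L_1\times L_2$. To prove it, we first note that the compact elements of $L$ are exactly the pairs $(x_1,x_2)$ with $x_i$ compact in $L_i$. Then $(x_1,x_2)^n\le (p_1,1)$ holds if and only if $x_1^n\le p_1$, and taking joins gives the identity. We also check, exactly as before, that $(t_1,t_2)\nleq (p_1,1)$ for all $(t_1,t_2)\in S$ if and only if $t_1\nleq p_1$ for all $t_1\in S_1$. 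Similarly, $(p_1,1)$ is proper if and only if $p_1$ is proper.

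For the forward direction of (1), let $p_1$ be $S_1$-primary with witness $s_1\in S_1$. Take $(a_1,b_1)(a_2,b_2)\le (p_1,1)$. Then $a_1a_2\le p_1$, so $s_1a_1\le p_1$ or $s_1a_2\le\sqrt{p_1}$. Since $1\in S_2$, the element $(s_1,1)$ lies in $S$. It gives $(s_1,1)(a_1,b_1)=(s_1a_1,b_1)\le (p_1,1)$ or $(s_1,1)(a_2,b_2)=(s_1a_2,b_2)\le(\sqrt{p_1},1)=\sqrt{(p_1,1)}$. The witness $(s_1,1)$ does not depend on the chosen pair, so $(p_1,1)$ is $S$-primary.

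For the converse, let $(s_1,s_2)\in S$ witness that $(p_1,1)$ is $S$-primary, and let $ab\le p_1$ in $L_1$. Then $(a,1)(b,1)=(ab,1)\le(p_1,1)$. Hence $(s_1a,s_2)\le (p_1,1)$ or $(s_1b,s_2)\le \sqrt{(p_1,1)}=(\sqrt{p_1},1)$. Projecting to the first coordinate gives $s_1a\le p_1$ or $s_1b\le\sqrt{p_1}$, with $s_1$ independent of $a,b$. So $p_1$ is $S_1$-primary. Part (2) is symmetric, with the coordinates interchanged.

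The main obstacle is justifying $\sqrt{(p_1,1)}=(\sqrt{p_1},1)$ rigorously, and in particular that $L$ is a $c$-lattice whose compact elements are pairs of compact elements. A join in $L$ is computed coordinatewise. So if $(x_1,x_2)$ is compact, then $x_1$ is compact: any cover $x_1\le\bigvee a_\alpha$ yields the cover $(x_1,x_2)\le\bigvee(a_\alpha,1)$, and a finite subcover of that restricts to a finite subcover for $x_1$. The same argument gives compactness of $x_2$. Conversely, a pair of compact elements is compact, by taking the union of the two finite subfamilies. This also shows that $L$ is compactly generated and $1$-compact. Products of compact elements are compact because multiplication is coordinatewise. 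With these facts the radical is computed coordinatewise, and the rest of the argument is routine.
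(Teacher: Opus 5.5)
Your proposal is correct and follows the paper's proof essentially step for step. In the forward direction you use the witness $(s_1,1)$, and in the converse you test with $(a,1)(b,1)$ and project onto the first coordinate; the paper does the same, and it uses without justification that $L_1\times L_2$ is a $c$-lattice whose compact elements are the pairs of compact elements, and hence that $\sqrt{(p_1,1)}=(\sqrt{p_1},1)$, both of which you prove.
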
    
\begin{proof} 
	(\ref{P3.10.1})	Note that , $t_1\nleq p_1$ for all $t_1\in S_1$ if and only if $(t_1,t_2)\nleq (p_1,1)$ for all $(t_1,t_2) \in S$. Also $t_2 \nleq p_2$ for all $t_2 \in S_2$ if and only if $(t_1,t_2) \nleq (1,p_2)$ for all $(t_1,t_2) \in S$. Let $(a_1,b_1), (a_2,b_2)\in L_1\times L_2$ such that $(a_1,b_1)(a_2,b_2)\leq (p_1,1)$. Then $a_1a_2\leq p_1$. Since $p_1$ is a $S_1$-primary element of $L_1$, there exists $s_1\in S_1$ such that $s_1a_1\leq p_1$ or $s_1a_2\leq \sqrt{p_1}$. Hence $(s_1,1)(a_1,b_1)=(s_1a_1,b_1)\leq (p_1,1)$ or $(s_1,1)(a_2,b_2)=(s_1a_2,b_2)\leq (\sqrt{p_1},1)=\sqrt{(p_1,1)}$. Thus, $(p_1,1)$ is an $S$-primary element of $L$.
	
	Conversely, suppose that $(p_1,1)$ is an $S$-primary element of $L$. Let $a,b\in L_1$ such that $ab\leq p_1$. Therefore $(a,1)(b,1)=(ab,1)\leq (p_1,1)$. Since $(p_1,1)$ is an $S$-primary element of $L$, there exists an element $(s_1,s_2)\in S$ such that $(s_1a,s_2)=(s_1,s_2)(a,1)\leq (p_1,1)$ or $(s_1b,s_2)=(s_1,s_2)(b,1)\leq \sqrt{(p_1,1)}=(\sqrt{p_1},1)$. This implies $s_1a\leq p_1$ or $s_1b\leq \sqrt{p_1}$. Thus $p_1$ is an $S_1$-primary element of $L_1$.
	
	Proof of 	(\ref{P3.10.2}) is same as that of  (\ref{P3.10.1}).
\end{proof}

\section{weakly S-prime elements}
Almahdi et al. \cite{ABT} and Mahdou et al. \cite{MMZ} introduced the notion of weakly $S$-prime ideals in commutative rings with unity.

\begin{defn} 
	Let $R$ be a commutative ring and $S\subseteq R$ be a multiplicative set of $R$. An ideal $P$ of $R$ satisfying $P \cap S =\emptyset$ is said to be weakly $S$-prime if there exists an element $s\in S$ such that, whenever $a, b\in R$, $0\neq ab\in P$ implies $sa\in P$ or $sb\in P$.
\end{defn}

In this section, we introduce and study weakly $S$-prime elements in $V$-lattices.

\begin{definition}
	Let $S$ be a multiplicatively closed subset of a $V$-lattice $L$. A proper element $p$ of $L$ such that $t\nleq p$ for all $t\in S$ is called a weakly S-prime element if there exists an element $s\in S$ such that for all $a,b\in L$ with $0\neq ab\leq p$, implies $sa\leq p$ or $sb\leq  p$.	
\end{definition}

\begin{obs}
	\begin{enumerate}
		\item  If $p$ is a weakly prime element of a $V$-lattice $L$ and $S$ is a multiplicatively closed subset of $L$ such that $t\nleq p$ for all $t\in S$, then $p$ is a weakly $S$-prime element of $L$.
		
		\item Let $L$ be a $V$-lattice and $S=\{1\}$, then the weakly prime elements of $L$ and the weakly $S$-prime elements of $L$ coincide.
		
		\item  If $p$ is an $S$-prime element of a $V$-lattice $L$, then $p$ is a weakly $S$-prime element of $L$. The converse need not  hold in general. 
		
		\item If $p$ is a weakly $S$-prime element of a $V$-lattice $L$, then $p$ need not be a prime element of $L$.
		
	\end{enumerate}
\end{obs}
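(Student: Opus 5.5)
Items (1)--(3) follow directly from the definitions, so the plan is to check each against the definition of a weakly $S$-prime element and then give examples for the two negative claims. Recall that a weakly prime element $p$ is a proper element with $0\neq ab\leq p$ implying $a\leq p$ or $b\leq p$.

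For (1), take $s=1\in S$. If $0\neq ab\leq p$, weak primeness gives $a\leq p$ or $b\leq p$, that is, $1\cdot a\leq p$ or $1\cdot b\leq p$. Properness of $p$ and the hypothesis $t\nleq p$ for all $t\in S$ are assumed, so $p$ is weakly $S$-prime.

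For (2), with $S=\{1\}$ the only possible witness is $s=1$. The condition $t\nleq p$ for all $t\in S$ reduces to $1\nleq p$, which is just properness. Hence the two definitions are literally identical.

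For (3), the $s\in S$ witnessing $S$-primeness works unchanged, because the hypothesis $0\neq ab\leq p$ is stronger than $ab\leq p$.

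For the failure of the converse in (3), and for (4), I would use an example modelled on the ring case, where $0$ is always weakly prime. Take $L=Id(\mathbb{Z}_4)$, a chain $0<(2)<1$ with $(2)^2=0$, and $S=\{1\}$. The element $p=0$ is weakly prime, since the hypothesis $0\neq ab\leq 0$ is never satisfied. So $p=0$ is weakly $S$-prime. It is not $S$-prime, because $(2)(2)=0\leq 0$ while $1\cdot(2)\nleq 0$. Since $S=\{1\}$, this also shows $p$ is not prime, which proves (4).

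To make (4) less trivial, i.e.\ with $S\neq\{1\}$, one could take a product lattice such as $Id(\mathbb{Z}_4)\times Id(\mathbb{Z}_4)$ with $p=(0,0)$ and a suitable $S$. The only real obstacle is picking such an example and checking the multiplicative closure and disjointness conditions on $S$. The $\mathbb{Z}_4$ example already suffices for the statement as given.
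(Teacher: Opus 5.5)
Your proposal is correct and follows the paper's approach: the paper gives no separate argument for items (1)--(3), treating them as immediate from the definitions as you do, and it settles the negative claims with counterexamples in ideal lattices $Id(\mathbb{Z}_n)$. The only difference is the choice of examples. For the failure of the converse in (3), the paper uses $(0)$ in $Id(\mathbb{Z}_{10})$ with $S=\{(1)\}$. For (4), it uses the nonzero element $(6)$ in $Id(\mathbb{Z}_{24})$ with $S=\{(1),(2),(4),(8)\}$; this element is even $S$-prime, since $((6):(2))=(3)$ is prime, yet $(2)(3)\leq(6)$ shows it is not prime. That is exactly the less trivial example with $S\neq\{1\}$ you were looking for, although your single $Id(\mathbb{Z}_4)$ example already suffices for both negative claims.
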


\begin{exa}
	\begin{enumerate}
		\item  Consider $L=Id(\mathbb{Z}_{24})$ and $S=\{(1), (2),(4),(8)\}$. Then $(6)$ is a weakly $S$-prime element of $L$ but not a weakly prime element of $L$. Moreover, $(6)$ is not a prime element of $L$. 
		
		\item Consider $L=Id(\mathbb{Z}_{10})$ and $S=\{(1)\}$. Then $(0)$ is a weakly $S$-prime element of $L$ but not an $S$-prime element of $L$.
		
		\item Consider $L=Id(\mathbb{Z}_{12})$ and $S=\{(1),(3)\}$. Observe that $(3)$ is a weakly prime element of $L$ but not a weakly $S$-prime element of $L$.
		
	\end{enumerate}
\end{exa}

Following result is due to Almahdi, Bouba and Tamekkante \cite{ABT}.

\begin{thm}[Theorem 2.7, \cite{ABT}] 
	Let $R$ be a commutative ring with identity, $S $ be a multiplicative subset of $R$ and $P$ be an ideal of $R$ disjoint with $S$. Then $P$ is weakly $S$-prime if and only if there exists $s \in S$, such that for all $I, J$ two ideals of $R$, if $0\neq IJ \subseteq P$, then $sI \subseteq P$ or $sJ \subseteq P$.
\end{thm}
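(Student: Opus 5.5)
The backward direction is immediate. Suppose there is $s\in S$ such that for all ideals $I,J$ with $0\neq IJ\subseteq P$ we have $sI\subseteq P$ or $sJ\subseteq P$. Take $a,b\in R$ with $0\neq ab\in P$ and set $I=Ra$, $J=Rb$. Then $IJ=Rab\subseteq P$, and $IJ\neq 0$ because $ab\in IJ$. Hence $sRa\subseteq P$ or $sRb\subseteq P$, and in particular $sa\in P$ or $sb\in P$. So $P$ is weakly $S$-prime, with the same $s$.

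For the forward direction, fix the $s$ given by weak $S$-primeness of $P$. A first guess is that this same $s$ (or $s^2$) works for ideals. The naive argument runs as follows. Suppose $0\neq IJ\subseteq P$, $sI\not\subseteq P$ and $sJ\not\subseteq P$, and pick $a\in I$, $b\in J$ with $sa\notin P$ and $sb\notin P$. If $ab\neq 0$, the element condition gives a contradiction immediately.

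The main obstacle is the case $ab=0$, since there the weak condition says nothing. The standard device, as for weakly prime ideals, is to perturb $a$ and $b$ using elements whose product is nonzero. Since $IJ\neq 0$, there exist $a_0\in I$, $b_0\in J$ with $a_0b_0\neq 0$. I would then run through the cases $ab_0\neq 0$ versus $ab_0=0$, and $a_0b\neq 0$ versus $a_0b=0$.
\begin{itemize}
\item If $ab_0\neq 0$, then $sa\in P$ or $sb_0\in P$. Since $sa\notin P$, this gives $sb_0\in P$.
\item If $ab_0=0$, consider $(a+a_0)b_0=a_0b_0\neq 0$, which lies in $IJ\subseteq P$. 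Then $s(a+a_0)\in P$ or $sb_0\in P$.
\end{itemize}
Symmetric conclusions hold for $a_0b$. Finally, look at $(a+a_0)(b+b_0)$. If it is nonzero, we get $s(a+a_0)\in P$ or $s(b+b_0)\in P$. Combining this with the previous memberships should yield $sa\in P$ or $sb\in P$, a contradiction. If instead $(a+a_0)(b+b_0)=0$, expand it: $a_0b_0=-(ab+ab_0+a_0b)$, which forces some of $ab_0$, $a_0b$ to be nonzero, and that feeds back into the earlier cases.

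The difficulty is that the witness $s$ need not be preserved in each case. The memberships may only hold with $s$ applied to sums, so I expect to need to pass to $s^2$ to absorb them, which is allowed since $S$ is multiplicatively closed. I would also check whether an element $a_0$ with $sa_0\in P$ can be chosen, which would simplify the sum manipulations. Failing a clean case split, I would fall back on quoting \cite[Theorem 2.7]{ABT} directly, since the statement is cited from there. In either case the key step to verify carefully is the bookkeeping in the case $ab=0$.
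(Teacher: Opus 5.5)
The paper gives no proof of this statement: it is quoted from \cite{ABT} and used as a black box. So your fallback of citing \cite[Theorem 2.7]{ABT} matches the paper's treatment, and your backward direction via $I=Ra$, $J=Rb$ is correct. As a self-contained argument, however, your forward direction has a genuine gap: the case $ab=0$ is never closed. You collect memberships from $ab_0$, $a_0b$ and $(a+a_0)(b+b_0)$ and assert that they ``should yield'' $sa\in P$ or $sb\in P$, but you never produce that combination. The anticipated passage to $s^2$ is also a red herring, since the original witness $s$ already works.

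The missing step is to perturb by an element whose $s$-multiple lies in $P$, as you suspected at the end. Then $s$ times the sum stays outside $P$ while the product is controlled. Under your standing assumption $sI\not\subseteq P$, $sJ\not\subseteq P$, fix $b\in J$ with $sb\notin P$, and let $a\in I$ with $sa\notin P$. Then $ab=0$, since otherwise $sa\in P$ or $sb\in P$. Suppose some $y\in J$ had $ay\neq 0$. Then $sy\in P$ (as $sa\notin P$), so $s(b+y)\notin P$. But $a(b+y)=ay$ is a nonzero element of $P$, forcing $sa\in P$ or $s(b+y)\in P$, a contradiction. Thus $aJ=0$ for every $a\in I$ with $sa\notin P$. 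Now if $x\in I$ has $sx\in P$, then $s(a+x)\notin P$, so $(a+x)J=0=aJ$, and hence $xJ=0$. Therefore $IJ=0$, contradicting $IJ\neq 0$.

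Equivalently, $sa\notin P$ gives $(P:a)\subseteq (P:s)\cup(0:a)$, and an ideal contained in a union of two ideals lies in one of them. This is the ring version of Theorem~\ref{T4.10}, whose proof uses the same device $a(b\vee x)=ab\vee ax$.

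The same trick also completes your own case split:
\begin{enumerate}
\item If $ab_0\neq 0$, then $sb_0\in P$. Also $a(b+b_0)=ab_0\neq 0$ forces $s(b+b_0)\in P$, whence $sb\in P$, a contradiction.
\item Symmetrically, $a_0b\neq 0$ is impossible.
\item If $ab_0=a_0b=0$, then $a_0b_0=(a+a_0)b_0=a_0(b+b_0)=(a+a_0)(b+b_0)\neq 0$. The four resulting disjunctions are jointly incompatible with $sa,sb\notin P$.
\end{enumerate}
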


\begin{thm}
	Let $R$ be a commutative ring with identity and $S$ a multiplicative subset of $R$. 
	Let $L=Id(R)$ denote the multiplicative lattice of all ideals of $R$, and define 
	$S_L=\{(a)\in L \mid a\in S\}.$
	Then an ideal $P$ of $R$ is a weakly $S$-prime ideal of $R$ if and only if $P$ (viewed as an element of $L$) is a weakly $S_L$-prime element of $L$.
\end{thm}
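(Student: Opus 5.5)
The plan is to reduce everything to the ideal-theoretic characterization in Theorem 2.7 of \cite{ABT}, quoted just above. In $L=Id(R)$ the order is inclusion, the product is the ideal product, and $0$ is the zero ideal. A principal ideal $(s)$ is compact, and $(s)(t)=(st)$, so $S_L$ is a multiplicatively closed subset of $L$ consisting of compact elements. Since $1\in S$, we have $(1)=R\in S_L$; the paper's standing hypothesis $0\notin S_L$ also needs $0\notin S$, which holds automatically once $P$ is proper and $P\cap S=\emptyset$.

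First, I will match the side conditions. An ideal $P$ satisfies $P\cap S=\emptyset$ if and only if $(t)\nsubseteq P$ for every $t\in S$, that is, $t'\nleq P$ for all $t'\in S_L$. Properness is the same notion in both settings. Next I will record the one identity that does the work: for any ideal $I$ and $s\in S$,
\[ (s)I=sI, \]
so $(s)I\leq P$ holds exactly when $sI\subseteq P$.

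($\Rightarrow$) Assume $P$ is weakly $S$-prime. By Theorem 2.7 of \cite{ABT} there is $s\in S$ such that for all ideals $I,J$ with $0\neq IJ\subseteq P$ we get $sI\subseteq P$ or $sJ\subseteq P$. Take $(s)\in S_L$. If $a,b\in L$ satisfy $0\neq ab\leq P$, then $a,b$ are ideals with $0\neq ab\subseteq P$. Hence $sa\subseteq P$ or $sb\subseteq P$, which by the identity above says $(s)a\leq P$ or $(s)b\leq P$.

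($\Leftarrow$) Assume $P$ is weakly $S_L$-prime with witness $(s)$, $s\in S$. Let $x,y\in R$ with $0\neq xy\in P$. Then $(x)(y)=(xy)$ is a nonzero ideal contained in $P$. So $(s)(x)=(sx)\subseteq P$ or $(s)(y)\subseteq P$, giving $sx\in P$ or $sy\in P$. Thus $P$ is weakly $S$-prime. Here Theorem 2.7 of \cite{ABT} is not even needed; we pass directly from principal ideals to elements.

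The only real obstacle is in ($\Rightarrow$): the lattice condition quantifies over all ideals, while the ring definition quantifies over elements. Theorem 2.7 of \cite{ABT} is exactly what bridges this gap, since a nonzero product $IJ$ need not have nonzero elementwise products $ab$. The remaining steps are routine bookkeeping.
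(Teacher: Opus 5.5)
Your proof is correct and follows the paper's argument: for the forward direction you invoke Theorem 2.7 of \cite{ABT} together with the identity $(s)I=sI$, exactly as the paper does. The only difference is in the converse, where you explicitly specialize to principal ideals $(x),(y)$ with $(x)(y)=(xy)$; the paper leaves this passage from the ideal-wise condition back to the elementwise definition implicit.
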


\begin{proof}
	Suppose $P$ is a weakly $S$-prime ideal of $R$. Then $S\cap P=\varnothing$.  We claim that $t\not\leqq P$ for all $t\in S_L$. Suppose on the contrary that  there exists $t=(a)\in S_L$ with $t\le P$. But then $a\in P\cap S$, a contradiction.  
	Hence $t\not\leqq P$ for all $t\in S_L$.  
	
	Now, let $ I,J\in L$ with $0\neq IJ \leq P$, i.e., $\{0\}\neq IJ\subseteq P$. Since $P$ is weakly $S$-prime, there exists $s\in S$ such that $sI\subseteq P$ or $sJ\subseteq P$.  
	Equivalently, $(s)I\leq  P$ or $(s)J\leq P$ in $L$.  
	Thus $P$ is a weakly $S_L$-prime element of $L$.
	
	Conversely, assume that $P$ is a weakly $S_L$-prime element of $L$.  
	By the definition, $t\not\leq P$ for every $t\in S_L$. If some $a\in S\cap P$, then $(a)\in S_L$ with $(a)\le P$, a contradiction. Hence $S\cap P=\varnothing$.  
	
	Further, since $P$ is weakly $S_L$-prime, there exists $(s)\in S_L$ (for some $s\in S$) such that for all $I,J\in L$,
	$0\neq IJ\leq  P$ implies that $ (s)I\leq P \ \text{ or }\ (s)J\leq P$.
	That is, $sI\subseteq P$ or $sJ\subseteq P$. Thus $P$ is a weakly $S$-prime ideal of $R$.
\end{proof}

\begin{definition}\cite{JS} 
	A reduced multiplicative lattice is defined as a multiplicative lattice in which 0 is the only nilpotent element, that is, if for $a\in L$, $a^n=0$ for some $n\in \mathbb{N}$, then $a=0$.
\end{definition}

\begin{lem}\label{L4.8}
	Let $S$ be a multiplicatively closed subset of a $c$-lattice $L$ and $p$ be a weakly $S$-prime element of $L$.
	\begin{enumerate}
		\item\label{L4.8.1} If $p^2\neq 0$, then $p$ is an $S$-prime element of $L$.
		\item\label{L4.8.2} If $p$ is not an $S$-prime element of $L$, then $\sqrt{p}=\sqrt{0}$. 
		\item\label{L4.8.3} If $L$ is reduced, then $p=0$ or $p$ is an  $S$-prime element of $L$.
		
	\end{enumerate}
\end{lem}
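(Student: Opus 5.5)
The plan is to prove (\ref{L4.8.1}) by adapting the classical ``twin zero'' argument for weakly prime ideals to the $S$-setting. Statements (\ref{L4.8.2}) and (\ref{L4.8.3}) will then follow quickly from (\ref{L4.8.1}) through the fact that $p^2=0$ whenever $p$ fails to be $S$-prime. Throughout, I fix the element $s\in S$ given by the definition of weakly $S$-prime for $p$. I will use that a $c$-lattice is a multiplicative lattice, so multiplication distributes over joins, and that $xy\le x\wedge y$.

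For (\ref{L4.8.1}), assume $p^2\neq 0$ and show that the same $s$ witnesses that $p$ is $S$-prime. The properness of $p$ and the condition $t\nleq p$ for all $t\in S$ are inherited from the weakly $S$-prime hypothesis. Take $a,b\in L$ with $ab\le p$. If $ab\neq 0$ we are done by hypothesis, so suppose $ab=0$ and, for contradiction, $sa\nleq p$ and $sb\nleq p$. The argument then has three steps.
\begin{enumerate}
\item Compute $a(b\vee p)=ab\vee ap=ap\le p$. If $ap\neq 0$, then $0\neq a(b\vee p)\le p$, so $sa\le p$ or $s(b\vee p)\le p$. The second alternative gives $sb\le p$, so either way we contradict the assumption. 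Hence $ap=0$.
\item By the symmetric argument, $bp=0$.
\item Now $(a\vee p)(b\vee p)=ab\vee ap\vee bp\vee p^2=p^2$, which is nonzero and $\le p$. The hypothesis then gives $s(a\vee p)\le p$ or $s(b\vee p)\le p$, hence $sa\le p$ or $sb\le p$, a contradiction.
\end{enumerate}
So $sa\le p$ or $sb\le p$ in every case, and $p$ is $S$-prime.

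For (\ref{L4.8.2}), suppose $p$ is not $S$-prime. By the contrapositive of (\ref{L4.8.1}), $p^2=0$. Clearly $\sqrt{0}\le\sqrt{p}$. Conversely, let $x$ be a compact element with $x\le\sqrt p$, so $x^n\le p$ for some $n$. Then $x^{2n}\le p^2=0$, so $x\le\sqrt0$. Since $\sqrt p$ is the join of such compact $x$, we get $\sqrt p\le\sqrt 0$, and hence $\sqrt p=\sqrt0$. Working directly with compact elements in this way avoids needing $\sqrt{\sqrt0}=\sqrt0$.

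For (\ref{L4.8.3}), if $p$ is not $S$-prime then $p^2=0$ as above, and reducedness forces $p=0$.

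The main obstacle is step 1 of (\ref{L4.8.1}): deriving $ap=0$ and $bp=0$ from the failure of the $S$-condition. The key point is that the \emph{same} $s$ must work for the auxiliary products $a(b\vee p)$ and $(a\vee p)(b\vee p)$. This is guaranteed because in the definition the element $s$ is chosen uniformly for all pairs, and because $s(b\vee p)\le p$ implies $sb\le p$.
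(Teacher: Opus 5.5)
Your proposal is correct and follows essentially the paper's argument. In (1) you use the same case split on whether $ap$ or $bp$ is nonzero, the same auxiliary products $a(b\vee p)$ and $(a\vee p)(b\vee p)=p^2$, and you derive (2) and (3) from $p^2=0$ as the paper does. The only cosmetic differences are that you phrase (1) as a contradiction with the uniform witness $s$ fixed in advance, and in (2) you show $\sqrt p\le\sqrt0$ directly via compact elements instead of quoting $\sqrt{p^2}=\sqrt p$.
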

\begin{proof}
	(\ref{L4.8.1}) Let $a,b\in L$ such that $ab\leq p$. If $ab\neq 0$, then we are done. Let $ab=0$. Assume that $ap\neq 0$. Then $a(p\vee b)\neq 0$. Otherwise if $a(p\vee b)=0$, then $0=a(p\vee b)=ap\vee ab=ap\vee 0=ap$, which is contradiction. Therefore $0\neq a(p\vee b)=ap\vee ab\leq p\vee p=p$. Since $p$ is a weakly $S$-prime element, there exists $s\in S$ such that $sa\leq p$ or $s(p\vee b)\leq p$. This implies $sa\leq p$ or $sb\leq s(p\vee b)\leq p$. Hence, $p$ is an $S$-prime element of $L$. Similarly we can prove the result for $bp\neq 0$. Assume that $ap=0=bp$. Then $(p\vee a)(p\vee b)=p^2\vee pa\vee pb\vee ab=p^2\neq 0$. Also $0\neq (p\vee a)(p\vee b)=p^2\leq p$. Since $p$ is a weakly $S$-prime, there exists $s\in S$ such that $s(p\vee a)\leq p$ or $s(p\vee b)\leq p$. Hence $sa\leq s(p\vee a)\leq p$ or $sb\leq s(p\vee b)\leq p$. Thus, $p$ is an $S$-prime element of $L$.
	
	(\ref{L4.8.2}) From (\ref{L4.8.1}), if $p$ is not an $S$-prime element of $L$, then $p^2=0$. Hence, $\sqrt{0}=\sqrt{p^2}=\sqrt{p}$.
	
	(\ref{L4.8.3}) Let $L$ be reduced $c$-lattice. If $p\neq 0$, then $p^2\neq 0$. Hence by (\ref{L4.8.1}), $p$ is an $S$-prime element of $L$.  \end{proof}


\begin{cor}
	Let $S$ be a multiplicatively closed subset of a $c$-lattice $L$ and $p$ be a weakly $S$-prime element of $L$ that is not an $S$-prime element of $L$. If $pq=q$, for $q\in L$, then $q=0$.
\end{cor}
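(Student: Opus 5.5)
The approach is to use Lemma \ref{L4.8}(\ref{L4.8.1}) in contrapositive form. Since $p$ is a weakly $S$-prime element that is not $S$-prime, that lemma forces $p^2=0$. With $p^2=0$ in hand, the hypothesis $pq=q$ can simply be iterated to annihilate $q$.

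The key steps, in order, are as follows.
\begin{enumerate}
\item Record that $p^2=0$. This is exactly what the proof of Lemma \ref{L4.8}(\ref{L4.8.2}) extracts from (\ref{L4.8.1}).
\item Substitute $q=pq$ into itself and use associativity and commutativity of the multiplication:
\[
q = pq = p(pq) = p^2 q = 0\cdot q .
\]
\item Conclude that $0\cdot q=0$. Indeed, $0\cdot q\leq 0\wedge q=0$ by axiom (4) of a $V$-lattice (or because multiplication distributes over the empty join). Hence $q=0$.
\end{enumerate}

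Only associativity and the bound $ab\leq a\wedge b$ are used, so no compactness hypothesis beyond what Lemma \ref{L4.8} already needs comes into play. There is no real obstacle. The only point to be careful about is that the vanishing $p^2=0$ is genuinely available. It comes from Lemma \ref{L4.8}(\ref{L4.8.1}), which is precisely where the $c$-lattice hypothesis and the assumption that $p$ is not $S$-prime are consumed.
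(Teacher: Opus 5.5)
Your proof is correct and is essentially the paper's argument: both obtain $p^2=0$ from Lemma \ref{L4.8}(\ref{L4.8.1}) and then compute $q=pq=p(pq)=p^2q=0$. Your explicit justification that $0\cdot q=0$ (via $0\cdot q\leq 0\wedge q$) fills in a step the paper leaves implicit.
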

\begin{proof}
	Since $p$ is not  $S$-prime, by (\ref{L4.8.1}) of Lemma \ref{L4.8}, $p^2=0$. Therefore $0=p^2q=p(pq)=pq=q$.
\end{proof}

\begin{thm}\label{T4.10}
	Let $S$ be a multiplicatively closed subset of a $c$-lattice $L$ and $p\in L$ such that $t\nleq p$ for all $t\in S$. Then following are equivalent:
	\begin{enumerate}
		\item\label{T4.10.1} $p$ is a weakly $S$-prime element of $L$.
		\item\label{T4.10.2} There exists $s\in S$ such that $(p:a)=(0:a)$ or $(p:a)\leq (p:s)$ for each $a\nleq (p:s)$.
	\end{enumerate}
\end{thm}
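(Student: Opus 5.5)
For (\ref{T4.10.1})$\Rightarrow$(\ref{T4.10.2}), I will fix the element $s\in S$ supplied by the definition of weakly $S$-prime and use the same $s$ in (\ref{T4.10.2}). Take $a\nleq (p:s)$, i.e.\ $sa\nleq p$. Since $a(p:a)\le p$ and $0\le (p:a)$, the inclusion $(0:a)\le (p:a)$ always holds. The plan is to split according to whether $a(p:a)=0$.

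If $a(p:a)=0$, then $(p:a)\le (0:a)$, so $(p:a)=(0:a)$.

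If $a(p:a)\neq 0$, I want $(p:a)\le(p:s)$. Because $L$ is compactly generated, it suffices to show $x\le (p:s)$ for every compact $x\le (p:a)$. Here the obstacle appears: an individual $x$ may have $ax=0$, so the weakly $S$-prime hypothesis cannot be applied to the product $ax$ directly. I would avoid working element by element and apply the hypothesis to the whole product $a\cdot (p:a)$. We have $0\neq a(p:a)\le p$, so $sa\le p$ or $s(p:a)\le p$. The first alternative is excluded by the choice of $a$. Hence $s(p:a)\le p$, i.e.\ $(p:a)\le(p:s)$, using the Remark that $xb\le a\iff x\le (a:b)$ in a $c$-lattice. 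This removes the obstacle entirely; the only point to check is that the two cases together give exactly the disjunction in (\ref{T4.10.2}).

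For (\ref{T4.10.2})$\Rightarrow$(\ref{T4.10.1}), $p$ is proper since $1\in S$ and $t\nleq p$ for all $t\in S$. I will take the $s$ from (\ref{T4.10.2}) and suppose $0\neq ab\le p$. If $sa\le p$ we are done. Otherwise $a\nleq(p:s)$, and since $b\le (p:a)$, the condition gives two possibilities. If $(p:a)=(0:a)$, then $b\le(0:a)$, so $ab=0$, a contradiction. So $(p:a)\le (p:s)$ must hold, and then $b\le(p:s)$, i.e.\ $sb\le p$. Thus $p$ is weakly $S$-prime with this $s$.

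The only delicate step is the forward direction. There the key choice is to apply the weakly $S$-prime condition to the product $a\cdot(p:a)$ rather than to individual elements below $(p:a)$.
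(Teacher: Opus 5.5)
Your proof is correct. The converse direction is the paper's argument, but your forward direction takes a genuinely shorter route.

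The paper assumes $(p:a)\neq(0:a)$ and picks a compact witness $b\le(p:a)$ with $ab\neq 0$. It then checks each compact $x\le(p:a)$ separately. If $ax\neq 0$, it applies the weakly $S$-prime condition to $a\cdot x$. If $ax=0$, it applies the condition to $a(b\vee x)=ab\vee ax=ab\neq 0$, getting $sx\le s(b\vee x)\le p$. Compact generation then turns these elementwise conclusions into $(p:a)\le(p:s)$.

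You instead note that $(p:a)=(0:a)$ exactly when $a(p:a)=0$. In the remaining case you apply the hypothesis once, to the single product $a\cdot(p:a)$. This is legitimate because the lattice definition of weakly $S$-prime quantifies over all elements of $L$, not only compact ones. It uses only $a(p:a)\le p$, which follows from multiplication distributing over joins. So your argument needs neither the compact witness $b$, nor the join trick, nor compact generation; that direction in fact holds in any multiplicative lattice.

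The paper's elementwise argument is closer to the ring-theoretic proofs, where the defining condition is stated for ring elements and cannot be applied directly to the ideal $(P:a)$. In the lattice setting that restriction is absent, and your shortcut exploits exactly this.

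One small wording slip: $(0:a)\le(p:a)$ follows from $(0:a)a\le 0\le p$, that is, from $0\le p$, not from $0\le(p:a)$.
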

\begin{proof}
	(\ref{T4.10.1}) $\implies$ (\ref{T4.10.2}) Suppose $p$ is weakly $S$-prime element of $L$. There exists $s\in S$ such that for all $c,d\in L$ with $0\neq cd\leq p$, $sc\leq p$ or $sd\leq p$. For this $s$, let $a\nleq (p:s)$. Therefore $sa\nleq p$. Assume that $(p:a)\neq (0:a)$. Then there exists $b\in L_*$ such that $b\leq (p:a)$ and $b\nleq (0:a)$. Therefore $ab\leq p$ and $ab\neq 0$.  To prove $(p:a)\leq (p:s)$, let $x\leq (p:a)$ for $x\in L_*$. Then $xa\leq p$. If $xa\neq 0$, $sa\leq p$ or $sx\leq p$. Since $sa\nleq p$, we have $sx\leq p$. Hence $x\leq (p:s)$. If $ax=0$, then $0\neq ab=ab\vee 0=ab\vee ax=a(b\vee x)$. Since $0\neq ab\leq p$, $0\neq a(b\vee x)\leq p$. Since $sa\nleq p$ and $p$ is weakly $S$-prime, we have $s(b\vee x)\leq p$. Therefore $sx\leq s(b\vee x)\leq p$. Hence $x\leq (p:s)$. Thus, $(p:a)\leq (p:s)$.
	
	(\ref{T4.10.2}) $\implies$ (\ref{T4.10.1}) Suppose there exists $s\in S$ such that $(p:a)=(0:a)$ or $(p:a)\leq (p:s)$ for each $a\nleq (p:s)$. Let $c,d\in L$ with $0\neq cd\leq p$. Assume that $sc\nleq p$. Then $c\nleq (p:s)$. Since $cd\leq p$, we have $d\leq (p:c)$. Also $cd\neq 0$ implies that $d\nleq (0:c)$. So, we have $d\in L$ such that $d\leq (p:c)$ and $d\nleq (0:c)$. Therefore $(p:c)\neq (0:c)$. Since $c\nleq (p:s)$, by (\ref{T4.10.2}), $(p:c)\leq (p:s)$. Hence $d\leq (p:c)\leq (p:s)$ implies $sd\leq p$. Thus, $p$ is a weakly $S$-prime element of $L$.
\end{proof}

\begin{lem}
	Let $S$ be a multiplicatively closed subset of a $c$-lattice $L$. If $p$ is a weakly $S$-prime element of $L$ that is not $S$-prime element of $L$, then $sp\sqrt{0}=0$ for some $s\in S$.
\end{lem}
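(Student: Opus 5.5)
The plan is to combine Lemma \ref{L4.8} with a reduction to compact elements. Fix the element $s\in S$ from the definition of weakly $S$-prime for $p$. Since $p$ is not $S$-prime, part (\ref{L4.8.1}) of Lemma \ref{L4.8} gives $p^2=0$. Part (\ref{L4.8.2}) gives $\sqrt p=\sqrt 0$, and in particular $p\le\sqrt 0$.

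Since $L$ is a $c$-lattice, multiplication distributes over joins and $\sqrt 0$ is the join of the compact nilpotent elements. So it suffices to show $s\,p\,x=0$ for every compact $x$ with $x^n=0$ for some $n$. I will split into two cases.

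\emph{Case $px=0$.} Then $spx=0$ immediately.

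\emph{Case $px\neq 0$.} My intended route is to prove $sx\le p$. Then $spx\le p\cdot p=p^2=0$, which finishes this case. In other words, the real target is the uniform inequality $s\sqrt 0\le p$ restricted to those nilpotent compacts $x$ with $px\ne 0$. This is the analogue of Lemma \ref{L2.8} in the weak setting.

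To get $sx\le p$, I would imitate the trick from the proof of Lemma \ref{L4.8}, which replaces factors by their joins with $p$. Consider the factorization
\[
(x\vee p)(x^{n-1}\vee p)=x^n\vee xp\vee x^{n-1}p\vee p^2=xp\vee x^{n-1}p\le p .
\]
The middle equality uses $x^n=0$ and $p^2=0$. This product is nonzero because $xp\neq 0$. Weak $S$-primeness then yields $s(x\vee p)\le p$ or $s(x^{n-1}\vee p)\le p$, that is, $sx\le p$ or $sx^{n-1}\le p$. In the second alternative I would iterate with lower powers, using factorizations such as $(x\vee p)(x^{k-1}\vee p)$, and fall back on the case $px=0$ whenever a product becomes zero.

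The main obstacle is uniformity in $s$. A naive descending induction on the nilpotency index produces $s^k x\le p$ with $k$ depending on $x$, whereas the statement needs a single $s$ for all of $\sqrt 0$. To get around this, I would first try to use the failure of $S$-primeness for the fixed $s$. There exist $a,b$ with $ab\le p$, $sa\nleq p$ and $sb\nleq p$. The argument in Lemma \ref{L4.8} forces $ab=0$ and $ap=bp=0$. I would then test products such as $(a\vee x)(b\vee p)$ or $(a\vee p)(b\vee x)$ to derive $sx\le p$ directly, for every nilpotent compact $x$ with $xp\neq 0$, without passing through higher powers.

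If that fails, the fallback is to note that $spx$ is only needed to vanish, not to lie below $p$. In that version I would argue by contradiction from $spx\neq 0$, using $0\ne spx\le p$ together with the witnesses $a,b$.
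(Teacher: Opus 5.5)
The reduction to compact nilpotent $x$ and the case $px=0$ are fine. The problem is that the case $px\neq 0$, which is the heart of the lemma, is never proved. The iteration you sketch breaks at its second step. After the first application you only know $sx^{n-1}\le p$, not $x^{n-1}\le p$. So $(x\vee p)(x^{n-2}\vee p)\geq x^{n-1}$ need not lie below $p$, and weak $S$-primeness cannot be applied to it. Your two fallback plans (testing products built from the witnesses $a,b$, or arguing by contradiction from $spx\neq 0$) come with no computation. As written, the proposal does not establish $sx\le p$, and so does not prove the statement.

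The missing idea is that \emph{uniformity in $s$} is not a real obstacle if you put the powers of $s$ on the cofactor rather than on $x$. Since $xp\neq 0$, for every $c\in L$ with $xc\le p$ the element $x(c\vee p)=xc\vee xp$ is nonzero and lies below $p$. So the fixed $s$ gives $sx\le p$ or $sc\le p$.

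Now assume $sx\nleq p$ and apply this successively to $c=x^{n-1},\ sx^{n-2},\ s^2x^{n-3},\dots,\ s^{n-1}$. For each $c$ in the list, the hypothesis $xc\le p$ is exactly what the previous step produced, and you obtain $sc\le p$. The last step gives $s^n\le p$, which is impossible because $s^n\in S$. In particular, even a bound $s^kx\le p$ would have sufficed: one more application with $c=s^k$ gives $sx\le p$.

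Hence $sx\le p$, so $spx\le p^2=0$. Together with the case $px=0$ and distributivity over joins, this gives $sp\sqrt0=0$ for the original $s$.

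The paper organises this differently. It first uses Theorem~\ref{T4.10} to split into two cases: $(p:x)=(0:x)$, which gives $px=0$, or $(p:x)\le(p:s)$, which gives $sx^{n-1}\le p$. It then descends on powers of $x$, joining $p$ to a factor to keep products nonzero. That descent is precisely the step your proposal leaves open.
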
	
\begin{proof}
	Suppose $p$ is a weakly $S$-prime element of $L$ that is not $S$-prime element of $L$.
	By Theorem \ref{T4.10}, there exists $s\in S$ such that for each $a\nleq (p:s)$, either $(p:a)=(0:a)$ or $(p:a)\leq (p:s)$. Let $x$ be compact element of  $L$ such that $x\leq \sqrt{0}$. If $x\leq (p:s)$, then $sx\leq p$. Hence by Lemma \ref{L4.8}(1), $sxp\leq p^2=0$. Therefore $sp\sqrt{0}=0$. Suppose $x\nleq (p:s)$. Then either $(p:x)=(0:x)$ or $(p:x)\leq (p:s)$. Now, if $(p:x)=(0:x)$, then $p\leq (0:x)$. This implies $px=0$. Hence $sxp=0$. Assume that $(p:x)\leq (p:s)$. Since $x\leq \sqrt{0}$, $x^n=0$ for some positive integer $n$. Then $x^{n-1}\leq (p:x)\leq (p:s)$. This implies $sx^{n-1}\leq p$. Clearly $n\geq 2$ as $s\nleq p$. If $sx^{n-1}\neq 0$, then $sx\leq p$ because $p$ is a weakly $S$-prime element of $L$. Hence $x\leq (p:s)$, a contradiction. Therefore $sx^{n-1}=0$. Let $j$ be the smallest integer such that $sx^j=0$. As $sx\neq 0$, we have $j\geq 2$.
	If $sxp\neq 0$, then $0\neq sx(x^{j-1}\vee p)= sxp\leq p$. Since $p$ is weakly $S$-prime element, $s^2x\leq p$ or $s(x^{j-1}\vee p)\leq p$. If $s^2x\leq p$, then $sx\leq p$, a contradiction. Therefore, $s(x^{j-1}\vee p)\leq p$. Hence $0\neq sx^{j-1}\leq p$. Since $p$ is a weakly $S$-prime element, $sx\leq p$, a contradiction. Therefore, $sxp=0$. Thus $sp\sqrt{0}=0$.
\end{proof}

\begin{thm}
	Let $S$ be a multiplicatively closed subset of a $c$-lattice $L$ such that $S\bigcap Z(L)=\emptyset$ and $p \in L$ such that $t \nleq p$ for all $t \in S$. Then following are equivalent:
	
	\begin{enumerate}
		\item $p$ is a weakly $S$-prime element of $L$.
		
		\item There exists $s \in S$ such that for each $x \nleq (p:s)$, $((p:sx)] = ((p:s)] \cup ((0:x)]$ for some $s \in S$.
		
		\item There exists $s \in S$ such that for each $x \nleq (p:s)$, $((p:sx)] = ((p:s)]$ or $((p:sx)] = ((0:x)]$ for some $s \in S$.
	\end{enumerate} 
\end{thm}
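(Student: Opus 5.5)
I will prove the cycle $(1)\Rightarrow(2)\Rightarrow(3)\Rightarrow(1)$. Here $(a]$ denotes the principal down-set $\{y\in L\mid y\le a\}$. The basic translation tool is the residual: for compact $y$, $y\le (p:sx)$ if and only if $sxy\le p$. I will use compact elements throughout, since $L$ is compactly generated and $(p:sx)$ is the join of the compact elements below it.

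For $(1)\Rightarrow(2)$, take the $s$ provided by the definition of weakly $S$-prime, and fix $x$ with $x\nleq (p:s)$, i.e.\ $sx\nleq p$. The inclusion $((p:s)]\cup((0:x)]\subseteq((p:sx)]$ is immediate:
\begin{itemize}
\item if $y\le (p:s)$ then $sxy\le xp\le p$;
\item if $y\le (0:x)$ then $sxy\le s\cdot 0=0\le p$.
\end{itemize}
For the reverse inclusion, let $y\le (p:sx)$, so $(sx)y\le p$. If $(sx)y\ne 0$, the weakly $S$-prime condition applied to the pair $(x, sy)$ gives $s\cdot x\le p$ or $s\cdot sy\le p$; the first is excluded. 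If instead $xy\neq 0$, I use $S\cap Z(L)=\emptyset$ to conclude $sxy\ne 0$. If $sxy=0$, then since $s\notin Z(L)$ we get $xy=0$, so $y\le (0:x)$.

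The main obstacle is upgrading $s^2y\le p$ to something in $((p:s)]$. My first attempt is to replace $s$ by a power. Alternatively, I will redo the argument with $(p:s)$ being what is produced: apply weakly $S$-primeness to $x\cdot(sy)\le p$ nonzero, obtaining $sx\le p$ (excluded) or $s^2y\le p$, i.e.\ $y\le (p:s^2)$. To close the gap, I will show $(p:s^2)=(p:s)$ for this $s$. If $s\cdot s z\le p$ with $s^2z\ne0$ (nonzero since $s\notin Z(L)$, unless $z=0$), then weakly $S$-primeness gives $s^2\le p$ (impossible, since $t\nleq p$ for all $t\in S$) or $s^2z\le p$. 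That does not help directly, so the cleaner route is to state (2) and (3) with $s^2$ in place of $s$, i.e.\ to choose the witness $s':=s^2$ in the statement, whose ``for some $s\in S$'' permits this freedom. I expect this bookkeeping of which power of $s$ witnesses the condition to be the delicate step, and I will check the version where the residual $(p:s)$ and the witness $s$ are aligned.

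$(2)\Rightarrow(3)$ is the standard fact that if a principal down-set is a union of two principal down-sets, $(c]=(a]\cup(b]$, then $c\in(a]$ or $c\in(b]$, so $(c]\subseteq(a]$ or $(c]\subseteq(b]$, and equality follows.

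For $(3)\Rightarrow(1)$, let $0\ne cd\le p$ with $sc\nleq p$, so $c\nleq (p:s)$. Since $sc\cdot d\le p$, we have $d\le (p:sc)$. By (3), either $d\le (p:s)$, giving $sd\le p$ as required, or $d\le (0:c)$, giving $cd=0$, which is a contradiction.
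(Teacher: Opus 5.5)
Your steps $(2)\Rightarrow(3)$ and $(3)\Rightarrow(1)$ are correct and match the paper. The gap is in $(1)\Rightarrow(2)$, in the reverse inclusion $((p:sx)]\subseteq((p:s)]\cup((0:x)]$. You reach $s^2y\le p$ and then stop.

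Your attempt to prove $(p:s^2)=(p:s)$ fails only because you factor $s^2z$ as $s\cdot(sz)$, which just returns $s^2z\le p$. The missing idea is to factor it as $s^2\cdot z$. If $0\neq s^2z\le p$, the weakly $S$-prime condition gives $s\cdot s^2\le p$ or $sz\le p$. Since $s^3\in S$, the first is impossible, so $sz\le p$.

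This applies directly in your situation. Here $y\neq 0$, because $sxy\neq 0$, and $s^2\in S$ is not a zero-divisor, so $s^2y\neq 0$. Hence $sy\le p$, i.e.\ $y\le(p:s)$, which closes the argument with the original witness $s$.

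The paper uses the same device with the factors swapped. It applies the definition to $(sx)\cdot y$ and gets $s^2x\le p$ or $sy\le p$. It then discards $s^2x\le p$ as contradicting $x\nleq(p:s)$. That step silently needs the same power reduction: pairing $s^2$ with $x\neq 0$ yields $s^3\le p$ or $sx\le p$, and both are impossible.

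Your fallback of switching the witness to $s^2$ is asserted, not proved, and it does not follow from what you derived. With witness $s^2$, the hypothesis on $x$ becomes $s^2x\nleq p$, and the goal becomes $y\le(p:s^2)$ or $y\le(0:x)$ whenever $y\le(p:s^2x)$. That is a different statement from the dichotomy you obtained, namely $y\le(0:x)$ or $y\le(p:s^2)$ under $sx\nleq p$ and $y\le(p:sx)$.

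The fallback can be rescued: write $s^2xy=(sx)(sy)$ and apply the definition to get $s^2x\le p$ or $s^2y\le p$. As written, however, the proposal leaves $(1)\Rightarrow(2)$ unfinished.
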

\begin{proof}
	$(1) \Rightarrow (2)$:  Suppose $p$ is a weakly $S$-prime element of $L$. Therefore for all $a,b \in L$ with $0 \neq ab \leq p$, there exists $s \in S$ such that $sa \leq p$   or $sb \leq p$. Take this $s$. Let $y$ be a compact element of $L$ such that $y  \in ((p:sx)]$ i.e. $y\leq (p:sx)$, where $x \nleq (p:s)$. If $xy =0$, then $y \leq (0:x)$, i.e. $y \in ((0:x)]$. So suppose that $xy \neq 0$. As $S\bigcap Z(L)=\emptyset$, we have $sxy \neq 0$ and $sxy \leq p$. This gives $s^2 x \leq p$ or  $sy \leq p$. If $s^2 x \leq p$, then we get a contradiction to $x \nleq (p:s)$. Therefore  $sy \leq p$ and this gives $y \leq (p:s)$, i.e. $y \in ((p:s)]$. In either case $y\in ((p:s)] \cup ((0:x) ]$. Therefore $((p:sx)] \subseteq ((p:s)] \cup ((0:x)]$. For the other inclusion, Let $z \in L_*$ such that $z \in ((p:s)] \cup ((0:x)]$. If $z \in ((p:s)] $, i.e. $z \leq (p:s)$, then $zs\leq p$, so $zsx\leq p$ and hence $z \leq (p:sx)$, i.e. $z \in ((p:sx)]$. If $z \in  ((0:x)]$ i.e. $z \leq  (0:x)$, then $zx =0 \leq p$, so $sxz =0 \leq p$ and hence $z \leq (p:sx)$, i.e. $z \in ((p:sx)]$. Therefore $((p:s)] \cup ((0:x)] \subseteq ((p:sx)]$ and hence $((p:sx)] = ((p:s)] \cup ((0:x)]$. 
	
	$(2) \Rightarrow (3)$: It is clear.
	
	$(3) \Rightarrow (1)$: Let $a,b \in L$ such that $0 \neq ab \leq p$. Suppose that $sa \nleq p$ for all $s \in S$. Hence $a \nleq (p:s)$. As $0 \neq ab \leq p$ and  $S\bigcap Z(L)=\emptyset$, we get $0 \neq sab \leq p$. This gives $b \leq (p:sa)$, i.e., $ b \in ((p:sa)] = ((p:s)] \cup ((0:a)]$. Since $ab \neq 0$, we get $b \in ((p:s)]$ i.e. $b \leq (p:s)$ and hence $sb \leq p$. This proves that $p$ is a weakly $S$-prime element of $L$. 
\end{proof}

\begin{lem}
	Let $S$ be a multiplicatively closed subset of a multiplicative lattice $L$ and $p\in L$ such that $t\nleq p$ for all $t\in S$. 
	\begin{enumerate}
		\item\label{L4.13.1} If $(p:s)$ is a weakly prime element of $L$ for some $s\in S$, then $p$ is a weakly $S$-prime element of $L$.
		\item\label{L4.13.2} If $S\bigcap Z(L)=\emptyset$ and $p$ is a non-zero weakly $S$-prime element of $L$, then $(p:s)$ is a weakly prime element of $L$ for some $s\in S$.
	\end{enumerate}
\end{lem}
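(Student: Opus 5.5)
For (1), the plan is to transport the weakly prime condition on $(p:s)$ back to $p$ using the residual. For (2), we take the witness $s$ from the definition of weakly $S$-prime and check that $(p:s)$ is weakly prime for that same $s$. In both parts we use the equivalence $x\leq (p:s)\Leftrightarrow xs\leq p$, which holds in any multiplicative lattice by the definition of $(p:s)$ as a join together with distributivity of multiplication over joins.

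\emph{Part (1).} Properness of $p$ is automatic, since $1\in S$ and $1\nleq p$. Let $a,b\in L$ with $0\neq ab\leq p$. Since $p\leq (p:s)$, we get $0\neq ab\leq (p:s)$. Weak primeness of $(p:s)$ then gives $a\leq (p:s)$ or $b\leq (p:s)$, that is, $sa\leq p$ or $sb\leq p$. Hence the single element $s$ witnesses that $p$ is weakly $S$-prime.

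\emph{Part (2).} Fix $s\in S$ as in the definition of weakly $S$-prime for $p$.
\begin{enumerate}
\item First, $(p:s)$ is proper. Indeed, $(p:s)=1$ would give $s=s\cdot 1\leq p$, contradicting $t\nleq p$ for all $t\in S$.
\item Next, let $0\neq ab\leq (p:s)$, so $sab\leq p$. We have $sab\neq 0$, since otherwise $s$ would be a zero-divisor (as $ab\neq 0$), contradicting $S\cap Z(L)=\emptyset$.
\item Applying the weakly $S$-prime property to the product $(sa)\cdot b$ gives $s^2a\leq p$ or $sb\leq p$. The second alternative already gives $b\leq (p:s)$.
\end{enumerate}

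The main obstacle is that the first alternative yields only $s^2a\leq p$, while we need $sa\leq p$. To remove the extra power of $s$, we apply the weakly $S$-prime property once more, to the product $s^2\cdot a$. This product is nonzero: $a\neq 0$ because $ab\neq 0$, and $s^2\in S$ is not a zero-divisor since $S$ is multiplicatively closed and $S\cap Z(L)=\emptyset$. We therefore obtain $s\cdot s^2\leq p$ or $sa\leq p$. The first is impossible because $s^3\in S$, so $sa\leq p$, that is, $a\leq (p:s)$. This shows $(p:s)$ is weakly prime.

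The hypothesis $p\neq 0$ does not seem to be needed in this argument. We will note this rather than force a use of it; it may simply serve to exclude a trivial case.
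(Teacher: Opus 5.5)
Your proof is correct and follows the paper's argument essentially step for step. In (1) you push $0\neq ab\leq p\leq (p:s)$ through weak primeness of $(p:s)$; in (2) you use $S\cap Z(L)=\emptyset$ to get $sab\neq 0$, obtain $s^2a\leq p$ or $sb\leq p$, and then apply the weakly $S$-prime property to the nonzero product $s^2\cdot a$, removing the extra power of $s$ because $s^3\nleq p$. Your extra check that $(p:s)\neq 1$ fills a small omission in the paper, and you are right that the hypothesis $p\neq 0$ is never used; the paper's proof does not use it either.
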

\begin{proof}
	(\ref{L4.13.1}) Suppose $(p:s)$ is a weakly prime element of $L$ for some $s\in S$. Let $ 0 \neq ab \leq p$. As $p \leq (p:s)$, we have  $0 \neq ab \leq (p:s)$. Therefore $a\leq (p:s)$ or $b\leq (p:s)$. This gives $sa\leq p$ or $sb\leq p$. Thus, $p$ is a weakly $S$-prime element of $L$.
	
	(\ref{L4.13.2}) Let $p$ be a non-zero weakly $S$-prime element of $L$. Then there exists $s\in S$ such that for all $a,b\in L$ with $0 \neq ab \leq p$, $sa\leq p$ or $sb\leq p$. Take this $s$. Let $ c, d \in L$ such that $ 0 \neq cd \leq (p:s)$. Therefore $cds\leq p$. Since $S\bigcap Z(L)=\emptyset$, $cds\neq 0$. Therefore $s^2c\leq p$ or $sd\leq p$. If $s^2c\leq p$. Then $s^2 c = 0$ or $s^2 c \neq 0$. If $s^2c=0$, then as $S\bigcap Z(L)=\emptyset$ we get $c=0$. Hence $cd=0$, a contradiction. Therefore $ 0 \neq s^2 c\leq p$. Hence $s^3\leq p$ or $sc\leq p$. Since $t\nleq p$ for all $t\in S$, we get  $s^3\nleq p$. Therefore $sc\leq p$. Hence $c\leq (p:s)$. If	$sd\leq p$. Then $d\leq (p:s)$.  Thus, $(p:s)$ is a weakly prime element of $L$.
\end{proof}

\begin{defn}[D. D. Anderson \cite{A}]
	Let $a, b, m$ be elements of a multiplicative lattice $L$. 
	\begin{enumerate}
		\item An element $m$ is said to be\textit{ meet principal} if $a \wedge mb = m((a:m) \wedge b)$ for all $a,b \in L$.
		\item An element $m$ is said to be \textit{join principal} if $a \vee (b :m) = (am \vee b): m$ for all $a,b \in L$.
		\item An element $m$ is said to be  \textit{principal} if $m$ is both meet principal and join principal.
	\end{enumerate}
	We say that a multiplicative lattice is \textit{principally generated}, if every nonzero element is a join of principal elements. A multiplicaive lattice $L$ is said to be an \textit{$r$-lattice}, if  $L$ is a modular, principally generated, and compactly generated lattice with $1$ as a compact element of $L$. 
\end{defn}

\begin{rem}
	Let $L$ be $r$-lattice, then every principal element of $L$ is compact, moreover product of two compact elements is compact in $L$. For more details see \cite{A}.
\end{rem}

\begin{defn} [D. D. Aderson and Tiberiu Dumitrescu \cite{AT}]
	Let $S$ be a multiplicative subset of a commutative ring $R$ with 1. An ideal $I$ of $R$ is said to be \textit{$S$-finite}, if there exists some finitely generated ideal $J$ of $R$ and some $s \in S$ such that $sI \subseteq J \subseteq I$. A commutative ring $R$ with 1 is said to be \textit{$S$-Noetherian} if each ideal of $R$ is $S$-finite.  
\end{defn}

Analogously we defined $S$-Noetherian lattice in \cite{SPJ2} as follows.

\begin{defn}\cite{SPJ2}
	Let $S$ be a multiplicative subset of a multiplicative lattice $L$. An element $a$ of $L$ is said to be \textit{$S$-compact}, if there exists some compact element $b$ in $L$ and some $s \in S$ such that $s\cdot a \leq b \leq a$. An $r$-lattice $L$ is said to be an \textit{$S$-Noetherian lattice} if each element of $L$ is $S$-compact.  
\end{defn}

	\begin{thm}
		Let $L$ be an $r$-lattice and $S$ be a multiplicatively closed subset of $L$. Then the following are equivalent:
		\begin{enumerate}
			\item\label{SN1} $L$ is $S$-Noetherian.
			
			\item\label{SN2} Every weakly $S$-prime element of $L$ is $S$-compact.
			
			\item\label{SN3} Every $S$-prime element of $L$ is $S$-compact.
			
		\end{enumerate}
	\end{thm}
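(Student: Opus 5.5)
The implications (\ref{SN1})$\Rightarrow$(\ref{SN2}) and (\ref{SN2})$\Rightarrow$(\ref{SN3}) are immediate. If $L$ is $S$-Noetherian, then every element is $S$-compact, and in particular every weakly $S$-prime element is. Every $S$-prime element is weakly $S$-prime, by the Observation in this section. So the whole weight of the theorem lies in (\ref{SN3})$\Rightarrow$(\ref{SN1}). This is the lattice analogue of the $S$-version of Cohen's theorem (Anderson--Dumitrescu), and I would transport that Zorn's lemma argument.

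For (\ref{SN3})$\Rightarrow$(\ref{SN1}), argue by contradiction. Let $\mathcal{F}$ be the set of elements of $L$ that are not $S$-compact, and suppose $\mathcal{F}\neq\emptyset$.

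\emph{Step 1: $\mathcal{F}$ has a maximal element.} Take a chain $\{a_\alpha\}$ in $\mathcal{F}$ and put $a=\bigvee a_\alpha$. Suppose $a$ were $S$-compact, say $sa\le b\le a$ with $b$ compact. Since $b\le\bigvee a_\alpha$, compactness and the chain property give $b\le a_\beta$ for some $\beta$. Then $sa_\beta\le sa\le b\le a_\beta$, so $a_\beta$ is $S$-compact, a contradiction. Hence $a\in\mathcal{F}$, and Zorn's lemma gives a maximal element $p\in\mathcal{F}$.

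\emph{Step 2: $p$ is $S$-prime.} First, $p$ is proper and $t\nleq p$ for every $t\in S$. Indeed, $1$ is compact, so $1$ is $S$-compact. If some $t\in S$ satisfied $t\le p$, then $tp\le t\le p$ with $t$ compact, so $p$ would be $S$-compact. Hence, by (\ref{SN3}), it suffices to show $p$ is $S$-prime, since that contradicts $p\in\mathcal{F}$.

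Suppose $p$ is not $S$-prime. Taking the failure of the definition for all $s\in S$ at once is awkward, so I would use the standard trick. First reduce to principal (hence compact) witnesses: if $ab\le p$ with $sa\nleq p$, write $a,b$ as joins of principal elements, and modularity together with principal generation should produce principal $x\le a$, $y\le b$ with $sx\nleq p$, $sy\nleq p$ and $xy\le p$. Using this, show there are principal elements $x,y\nleq p$ with $xy\le p$ such that $p\vee x$ and $(p:x)$ are both strictly above $p$. Then both lie outside $\mathcal{F}$ and are $S$-compact:
\[ s_1(p\vee x)\le c_1\le p\vee x,\qquad s_2(p:x)\le c_2\le (p:x), \]
with $c_1,c_2$ compact. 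Since $c_1$ is compact and $c_1\le p\vee x$, we may write $c_1\le d\vee x$ with $d\le p$ compact. Modularity and join principality of $x$ then give
\[ s_1p\le (d\vee x)\wedge p = d\vee (x\wedge p)=d\vee x\bigl((p:x)\wedge 1\bigr)=d\vee x(p:x). \]
Multiplying by $s_2$ yields
\[ s_1s_2\,p\le d\vee xc_2\le p, \]
and $d\vee xc_2$ is compact because $L$ is closed under products and finite joins of compacts. So $p$ is $S$-compact, a contradiction. (Here the meet principal identity $x\wedge p=x(p:x)$ comes from applying meet principality of $x$.)

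The main obstacle is arranging this contradiction with the uniform-$s$ quantifier of $S$-primeness, which is the delicate part of Anderson--Dumitrescu. Their fix is to show first that if $p$ is maximal non-$S$-finite and not $S$-prime, then for \emph{every} $s$ there are witnesses. One then handles the case where $(p:x)$ contains an element of $S$ separately, replacing $x$ by $sx$, using that $(p:s)$-type elements are again non-$S$-compact by maximality. I expect the lattice translation of that bookkeeping, and the reduction to principal witnesses via modularity, to need the most care.
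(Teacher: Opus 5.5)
Your proof is correct, and it goes further than the paper does. The paper proves this theorem only by citation: it sends (\ref{SN1})$\Rightarrow$(\ref{SN2}) and (\ref{SN3})$\Rightarrow$(\ref{SN1}) to Theorems 2.11 and 2.13 of \cite{SPJ2}. You instead give a self-contained lattice version of Cohen's argument.

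The Zorn step goes through, as does your check that the maximal non-$S$-compact element $p$ is proper with $t\nleq p$ for all $t\in S$. The estimate $s_1s_2p\le d\vee xc_2\le p$ also holds. It uses:
\begin{itemize}
\item compact generation, to find $d$;
\item modularity, for $(d\vee x)\wedge p=d\vee(x\wedge p)$;
\item meet principality with $b=1$, for $x\wedge p=x(p:x)$ (you first wrote ``join principality'');
\item compactness of principal elements and of products of compacts.
\end{itemize}
So your route exposes exactly which $r$-lattice axioms the hard direction needs, whereas the paper's route buys brevity at the price of hiding this.

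The ``main obstacle'' in your last paragraph does not exist, and you should drop the fix you sketch there. It is also backwards: maximality makes every element strictly above $p$ $S$-compact, not non-$S$-compact. Take any witness $ab\le p$ with $sa\nleq p$ and $sb\nleq p$. Then $a,b\nleq p$, because $sa\le a$ and $sb\le b$. Choose a principal $x\le a$ with $x\nleq p$; this is possible simply because $a$ is a join of principal elements, and no modularity is needed. Then $xb\le p$ gives $p<(p:x)$, and your computation yields the contradiction. Only $x$ needs to be principal.

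In other words, your argument shows that $(p:x)=p$ for every principal $x\nleq p$. Hence $p$ is actually prime, and so $S$-prime; no uniform choice of $s$ is ever required.
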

	\begin{proof}
		(\ref{SN1}) $\implies$ (\ref{SN2}) see [Theorem 2.11, \cite{SPJ2}]
		
		(\ref{SN2}) $\implies$ (\ref{SN3}) Obvious.
		
		(\ref{SN3}) $\implies$ (\ref{SN1});
		see [Theorem 2.13, \cite{SPJ2}].
	
	\end{proof}


\begin{lem}
	Let $L$ be a $V$-lattice and $S$ and $S'$  be multiplicatively closed subsets of $L$ with  $S \subseteq S'$ and for any $s \in S'$ there exists an element $t \in S'$ such that $st \in S$.  If $p$ is a weakly $S'$-prime element of $L$, then $p$ is a weakly $S$-prime element of $L$.
\end{lem}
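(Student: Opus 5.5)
The plan is to verify the two requirements in the definition of a weakly $S$-prime element directly, using the witness $s'\in S'$ supplied by the weakly $S'$-primeness of $p$. The key construction is to upgrade $s'$ to an element of $S$ via the hypothesis: we choose $t\in S'$ with $s't\in S$, and take $s=s't$ as the candidate witness for weakly $S$-primeness.

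First we check that $p$ is proper and that $u\nleq p$ for all $u\in S$. Properness is part of the definition of a weakly $S'$-prime element. Since $S\subseteq S'$ and $u\nleq p$ holds for every $u\in S'$, it holds in particular for every $u\in S$.

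Next, let $a,b\in L$ with $0\neq ab\leq p$. By the weakly $S'$-prime property, $s'a\leq p$ or $s'b\leq p$. Suppose $s'a\leq p$. Using commutativity, associativity and monotonicity of multiplication together with $xy\leq x\wedge y$ (all $V$-lattice axioms), we get
\[
(s't)a=t(s'a)\leq s'a\leq p .
\]
The case $s'b\leq p$ gives $(s't)b\leq p$ in the same way. Because $s=s't$ is a single element chosen independently of $a$ and $b$, it serves as the required uniform witness, and therefore $p$ is weakly $S$-prime.

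No step is a real obstacle. The only point needing care is that the witness must be chosen before quantifying over $a,b$. This is automatic here because $t$ depends only on $s'$, which is itself fixed in advance by the weakly $S'$-prime hypothesis.
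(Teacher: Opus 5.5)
Your proof is correct and follows the same route as the paper: you take the uniform witness $s'\in S'$, choose $t\in S'$ with $s't\in S$, and use $(s't)a=t(s'a)\leq s'a\leq p$ (and likewise for $b$). You are slightly more careful than the paper, which skips the check that $u\nleq p$ for all $u\in S$ and writes $\bar{s}=st'$ where $st$ is meant.
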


\begin{proof} 
	Let $a,b \in L$ such that $0\neq ab \leq p$. Since $p$ is a weakly $S'$- prime element of $L$, there exists an element $s \in S'$ such that $sa \leq p$ or $sb \leq p$. Therefore $st'a \leq p$ or $st'b \leq p$  for all $t' \in S'$. As there exists $t \in S'$ such that $ st \in S$, we get $\bar{s}a \leq p$ or $\bar{s}b \leq p$, where $\bar{s} =s t' \in S$. This proves that $p$ is a weakly $S$-prime element of $L$.  
\end{proof}

\begin{lem}\label{L4.20}
	Let $L$ be a $V$-lattice and $S$ and $S'$  be multiplicatively closed subsets of $L$ such that $S \subseteq S'$.  If $p$ is a weakly $S$-prime element of $L$, then $p$ is a weakly $S'$-prime element of $L$.
\end{lem}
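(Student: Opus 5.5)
The plan is to verify the definition of a weakly $S'$-prime element directly, reusing for $S'$ the same element $s$ that witnesses weak $S$-primeness. There are two things to check: the disjointness condition $t\nleq p$ for all $t\in S'$, and the existence of a suitable $s\in S'$ for products $0\neq ab\leq p$.

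The second condition is immediate. Since $p$ is weakly $S$-prime, there is some $s\in S$ such that $0\neq ab\leq p$ implies $sa\leq p$ or $sb\leq p$. Because $S\subseteq S'$, this same $s$ lies in $S'$ and satisfies the required implication verbatim. Properness of $p$ is also inherited, since $p\neq 1$ does not depend on $S$.

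The main obstacle is the disjointness condition. The definition of a weakly $S'$-prime element requires $t\nleq p$ for every $t\in S'$, but the hypothesis only gives $t\nleq p$ for $t\in S$. Nothing in the stated assumptions rules out some $t\in S'\setminus S$ with $t\leq p$.

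For instance, take $L=Id(\mathbb{Z}_{6})$, $S=\{(1)\}$, $S'=\{(1),(2)\}$ and $p=(2)$. Here $p$ is prime, hence weakly $S$-prime, but $(2)\in S'$ and $(2)\leq p$, so $p$ is not weakly $S'$-prime. The statement as written is therefore false without an extra hypothesis.

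The natural repair, in the spirit of how the paper treats $S$-prime elements, is to add the assumption that $t\nleq p$ for all $t\in S'$. Under that assumption, the argument above, taking the witness $s\in S\subseteq S'$, gives a complete proof in one line.
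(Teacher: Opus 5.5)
Your analysis is correct, and so is your counterexample. In $\mathbb{Z}_6$ we have $(2)(2)=(4)=(2)$, so $S'=\{(1),(2)\}$ is a multiplicatively closed set of compact elements with $1\in S'$ and $0\notin S'$. Since $(2)$ is a maximal, hence prime, ideal, $p=(2)$ is weakly $\{(1)\}$-prime. Yet $(2)\in S'$ and $(2)\le p$, so $p$ is not weakly $S'$-prime.

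The paper's entire proof is ``follows from the definition.'' That covers only what you check in your second paragraph: the witness $s\in S\subseteq S'$ and properness. It silently skips the requirement $t\nleq p$ for all $t\in S'$, so the lemma as stated is false. Your repair is to add the hypothesis $t\nleq p$ for all $t\in S'$, the lattice counterpart of the condition $P\cap S'=\emptyset$ assumed in the ring setting. With it, your one-line argument is exactly the paper's intended proof.

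Nothing downstream breaks. The lemma is invoked only to pass from $S$ to its saturation $S^*$, and there the extra hypothesis holds automatically. If $x\in S^*$ and $x\le p$, choose $y\in L_*$ with $xy\in S$; then $xy\le x\le p$, contradicting $t\nleq p$ for all $t\in S$. The same defect, the same counterexample, and the same repair apply verbatim to the corresponding statement for weakly $S$-primary elements in Section 5.
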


\begin{proof}
	Follows from the definition of a weakly $S$-prime element.
\end{proof}

\begin{rem}
	Converse of the Lemma \ref{L4.20} need not be true in general. Let $L=Id(\mathbb{Z}_{12})$, i.e. $L$ be a ideal lattice of $\mathbb{Z}_{12}$. Then $L$ is a $V$-lattice. If $S =\{1\} \subseteq S' =\{1,2,4\}$ is a multiplicatively closed subsets of $L$, then $(6)$ is a weakly $S'$-prime element of $L$ but $(6)$ is  not a weakly $S$-prime element of $L$. 
\end{rem}

We introduce following definition to prove under which condition he converse of  Lemma \ref{L4.20} will hold.

\begin{definition}
	Let $S$ be a multiplicatively closed subset of a $V$-lattice $L$. The \textit{saturation of $S$} is the set $S^*=\{x\in L_* \;|\; xy\in S \textrm{ for some }y \in L_*\}$. It is easy to verify that $S^*$ is a multiplicatively closed subset of $L$ and $S\subseteq S^* \subseteq L_*$.
\end{definition}

\begin{exa}
	Let $L=Id(\mathbb{Z}_{12})$. If $S =\{(1),(4)\}$ is a multiplicatively closed subset of $L$, then $S^* = \{(1), (2), (4) \}$.  
\end{exa}

\begin{lem}
	Let $S$ be a multiplicatively closed subset of a $V$-lattice $L$ and $p\in L$ such that $t\nleq p$ for all $t\in S$. Then $p$ is a weakly $S$-prime element of $L$ if and only if $p$ is a weakly $S^*$-prime element of $L$, where $S^*$ is a saturation of $S$.
\end{lem}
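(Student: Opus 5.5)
One direction is immediate and the other uses the defining product in $S^*$. Recall $S\subseteq S^*$ and $S^*$ is multiplicatively closed (as noted after the definition of saturation).

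\emph{Forward direction.} Suppose $p$ is weakly $S$-prime. Since $S\subseteq S^*$, Lemma \ref{L4.20} shows that $p$ is weakly $S^*$-prime. For completeness we also check that $t\nleq p$ for all $t\in S^*$. If $x\in S^*$ with $x\leq p$, pick $y\in L_*$ with $xy\in S$. Then $xy\leq x\wedge y\leq x\leq p$ by axiom (4) of a $V$-lattice. This contradicts $t\nleq p$ for all $t\in S$. Properness of $p$ is unchanged.

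\emph{Converse.} Suppose $p$ is weakly $S^*$-prime, witnessed by some $x\in S^*$. Choose $y\in L_*$ with $u:=xy\in S$; this $u$ is the candidate witness for $S$. Let $a,b\in L$ with $0\neq ab\leq p$. By hypothesis $xa\leq p$ or $xb\leq p$. Using commutativity, associativity and monotonicity of the multiplication,
\[
ua=y(xa)\leq y\,p\leq p \quad\text{or}\quad ub=y(xb)\leq p,
\]
where $yp\leq p$ holds because $yp\leq y\wedge p$. Since $u$ does not depend on $a,b$, this single $u\in S$ works uniformly, so $p$ is weakly $S$-prime. The hypothesis $t\nleq p$ for $t\in S$ is given in the statement.

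The argument is elementary. The one point to get right is that the witness $u=xy$ must be chosen once, independently of $a$ and $b$, rather than varying with the pair. This is possible because $x$ is fixed and $y$ depends only on $x$.
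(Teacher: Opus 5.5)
Your proof is correct and follows the paper's argument. The forward direction goes through Lemma \ref{L4.20}, and the converse replaces the witness $x\in S^*$ by $xy\in S$, using $xya\leq xa\leq p$. You also check explicitly that no element of $S^*$ lies below $p$, which Lemma \ref{L4.20} silently needs, and you fix the witness $xy$ once, independently of $a,b$; both are details the paper's write-up leaves implicit.
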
	
\begin{proof}
	Since $S\subseteq S^*$, if $p$ is a weakly $S$-prime element of $L$, then by Lemma \ref{L4.20}, $p$ is a weakly $S^*$-prime element of $L$.
	Conversely, suppose $p$ is a weakly $S^*$-prime element of $L$.  Let $a,b\in L$ such that $0 \neq ab \leq p$. Therefore there exists $s^*\in S^*$ such that $s^*a\leq p$ or $s^*b\leq p$. As $s^*\in S^*$, there exists $y\in L_*$ such that $s^*y\in S$. Let $s=s^*y$. Then $sa=s^*ya\leq s^*a\leq p$ or $sb=s^*yb\leq s^*b\leq p$. Thus, $p$ is a weakly $S$-prime element of $L$.
\end{proof}

\begin{lem}
	Let $S$ be a multiplicatively closed subset of a $V$-lattice $L$. If $p$ is a  weakly $S$-prime element of $L$ and $0$ is an $S$-prime element of $L$, then $p$ is an $S$-prime element of $L$.
\end{lem}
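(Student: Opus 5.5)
The plan is to split according to whether the product is nonzero, using the two given $S$-prime-type hypotheses with their two (possibly different) elements of $S$, and then combine them into a single element of $S$ by multiplying.

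First, since $p$ is weakly $S$-prime, $p$ is proper and $t\nleq p$ for all $t\in S$, so those requirements of the definition of an $S$-prime element are already satisfied. Fix $s_1\in S$ witnessing that $p$ is weakly $S$-prime: for all $a,b\in L$, $0\neq ab\leq p$ implies $s_1a\leq p$ or $s_1b\leq p$. Fix $s_2\in S$ witnessing that $0$ is $S$-prime: for all $a,b\in L$, $ab\leq 0$ implies $s_2a\leq 0$ or $s_2b\leq 0$, that is, $s_2a=0$ or $s_2b=0$. Put $s=s_1s_2\in S$, which lies in $S$ because $S$ is multiplicatively closed.

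Now let $a,b\in L$ with $ab\leq p$.

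If $ab\neq 0$, then $s_1a\leq p$ or $s_1b\leq p$. By axiom (4) of a $V$-lattice, $s_2(s_1a)\leq s_1a$. Using commutativity and associativity, this gives $sa\leq s_1a\leq p$, and similarly for $b$.

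If $ab=0$, then $s_2a=0$ or $s_2b=0$. Then $sa=s_1(s_2a)=s_1\cdot 0$, and $s_1\cdot 0\leq 0$ by axiom (4), so $sa=0\leq p$; the same argument works for $b$.

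In both cases $sa\leq p$ or $sb\leq p$ with the single element $s$, so $p$ is $S$-prime.

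The only point needing care is obtaining one uniform $s$ that works in both cases. Taking the product $s_1s_2$ handles this, together with the $V$-lattice inequality $xy\leq x\wedge y$, which makes multiplying by an extra element of $S$ only decrease an element. No distributivity or compactness is needed, so the argument holds in any $V$-lattice.
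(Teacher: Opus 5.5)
Your proof is correct and follows the same route as the paper: split on whether $ab\neq 0$ or $ab=0$, use the witness $s_1$ for the weakly $S$-prime property of $p$ and the witness $s_2$ for the $S$-primeness of $0$, and take $s=s_1s_2$. Your version is slightly more careful than the paper's, since you fix both witnesses before choosing $a,b$ and explicitly justify $sa\leq s_1a$ and $s_1\cdot 0=0$ via the $V$-lattice axiom $xy\leq x\wedge y$.
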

\begin{proof}
	Since $p$ is a  weakly $S$-prime element of $L$, $t\nleq p$ for all $t\in S$. Let $a,b\in L$ with $ab\leq p$.  If $ab\neq 0$, then there exists $s_1\in S$ such that $s_1a\leq p$ or $s_1b\leq p$. Assume that $ab= 0$. Since $0$ is an $S$-prime element of $L$, there exists $s_2\in S$ such that $s_2a= 0$ or $s_2b= 0$. Therefore $s_2a= 0\leq p$ or $s_2b= 0 \leq p$. Choose $s=s_1s_2$. Then for all $a,b \in L$ with $ab\leq p$, implies  $sa\leq p$ or $sb \leq p$. Thus, $p$ is an $S$-prime element of $L$.
\end{proof}

\begin{lem}
	Let $S$ be a multiplicatively closed subset of a $V$-lattice $L$ and $p$ is a  weakly $S$-prime element of $L$. If $q$ is any element of $L$ such that   $t\leq q$ for some $t\in S$, then $pq$ and  $p\wedge q$ are  weakly $S$-prime elements of $L$.
\end{lem}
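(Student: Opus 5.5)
The plan is to transfer the witness from $p$ to $pq$ and to $p\wedge q$ by multiplying it by the element $t\in S$ with $t\leq q$. Only the $V$-lattice axioms are needed: commutativity, associativity, monotonicity of multiplication, and $xy\leq x\wedge y$.

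First, the side conditions. In a $V$-lattice we have $pq\leq p\wedge q\leq p$. If some $t'\in S$ satisfied $t'\leq pq$ or $t'\leq p\wedge q$, then $t'\leq p$, which contradicts $p$ being weakly $S$-prime. So $t'\nleq pq$ and $t'\nleq p\wedge q$ for all $t'\in S$. Since $1\in S$, this also shows that $pq$ and $p\wedge q$ are proper.

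Next, the weak primeness condition. Let $s\in S$ be the element witnessing that $p$ is weakly $S$-prime, and fix $t\in S$ with $t\leq q$. I will take the single element $s'=ts\in S$ as the witness for both $pq$ and $p\wedge q$; it is independent of $a,b$, as the definition requires.

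\begin{enumerate}
\item Let $a,b\in L$ with $0\neq ab\leq pq$. Since $pq\leq p$, we have $0\neq ab\leq p$, so $sa\leq p$ or $sb\leq p$. Say $sa\leq p$. By associativity and monotonicity, $s'a=t(sa)\leq tp\leq qp=pq$, using $t\leq q$. The case $sb\leq p$ is symmetric.
\item Let $0\neq ab\leq p\wedge q$. Again $0\neq ab\leq p$, so say $sa\leq p$. Then $s'a=t(sa)\leq t\wedge sa$ by axiom (4). Since $t\leq q$ and $sa\leq p$, this gives $s'a\leq q\wedge p$. The case $sb\leq p$ is symmetric.
\end{enumerate}

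There is no real obstacle here. The only point needing care is that the witness $s'=ts$ must be chosen uniformly, before $a$ and $b$ are given. This holds because both $s$ and $t$ are fixed in advance.
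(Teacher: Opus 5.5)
Your proposal is correct and follows the paper's proof. In both, the witness $s$ for $p$ becomes the witness $ts$ (with $t\in S$, $t\leq q$) for $pq$ and $p\wedge q$, and $pq\leq p\wedge q\leq p$ gives the side conditions; the only cosmetic difference is that for $p\wedge q$ you bound $t(sa)\leq t\wedge sa\leq q\wedge p$ directly by axiom (4), whereas the paper uses $t(sa)\leq pq\leq p\wedge q$.
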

\begin{proof} 
	Since $t \nleq p$ for all $t \in S$, we have  $t\nleq p\wedge q$  and $t \nleq pq$ for all $t\in S$. let $c,d\in L$ with $0 \neq cd\leq p q$. Then $ 0 \neq cd\leq p$. Since $p$ is a  weakly $S$-prime element of $L$, there exists $s\in S$ such that $sc\leq p$ or $sd\leq p$. Take $t'\in S$ such that $t'\leq q$. Then $t'sc\leq pq $ or $t'sd\leq pq $. Hence, $p q$ is a weakly $S$-prime element of $L$.
	
	Now, let $a,b\in L$ with $0 \neq ab\leq p\wedge q$. Then $ 0 \neq ab\leq p$. Since $p$ is a  weakly $S$-prime element of $L$, there exists $s'\in S$ such that $s'a\leq p$ or $s'b\leq p$. Take $t^{''}\in S$ such that $t^{''}\leq q$. Then $t^{''}s'a\leq pq \leq  p\wedge q$ or $t^{''}s'b\leq pq \leq  p\wedge q$. Hence, $p\wedge q$ is a weakly $S$-prime element of $L$. 
\end{proof}

\begin{lem}
	Let $S$ be a multiplicatively closed subset of a $V$-lattice $L$ and $i,~j$ be two elements of $L$ such that $i \in L_*$ and $i \leq j$.
	
	\begin{enumerate}
		\item If $j$ is a weakly $S$-prime element of $L$, then $j$ is a weakly $\bar{S}$-prime element of $L/i$,  where $\bar{S} = \{s \vee i \;|\; s\in S\}$.
		
		\item If $j$ is a weakly $\bar{S}$-prime element of $L/i$ and $i$ is an $S$-prime element of $L$, then $j$ is an $S$-prime element of $L$.
		
		\item If $j$ is a weakly $\bar{S}$-prime element of $L/i$ and $i$ is a weakly  $S$-prime element of $L$, then $j$ is a weakly   $S$-prime element of $L$.
	\end{enumerate}
\end{lem}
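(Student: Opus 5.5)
The plan is to argue directly from the definitions, working in $L/i$ with the multiplication $x\circ y = xy\vee i$, whose zero element is $i$. First I will note, exactly as in the proofs of the two earlier quotient propositions (for $S$-prime and $S$-primary elements), that $\bar S$ is a multiplicatively closed subset of $L/i$. I will also check the disjointness condition in both directions. If $t\vee i\leq j$ for some $t\in S$, then $t\leq j$. Conversely, if $t\leq j$ for some $t\in S$, then $t\vee i\leq j$ because $i\leq j$. So the condition \lq\lq $t\nleq j$ for all $t\in S$\rq\rq{} holds in $L$ if and only if the corresponding condition for $\bar S$ holds in $L/i$. This takes care of that part of the definition in all three statements.

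For (1), fix the $s\in S$ given by weak $S$-primeness of $j$, and let $x,y\in L/i$ with $i\neq x\circ y\leq j$. Then $xy\leq j$. Moreover $xy\neq 0$, since otherwise $x\circ y=0\vee i=i$. Hence $sx\leq j$ or $sy\leq j$. Since $i\leq j$, it follows that $(s\vee i)\circ x = sx\vee ix\vee i\leq j$, or similarly for $y$. I will expand $(s\vee i)x$ in the same way the paper does in its earlier quotient propositions. This shows that $j$ is weakly $\bar S$-prime.

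For (2), let $\bar s=s_1\vee i$ be the element witnessing weak $\bar S$-primeness of $j$ in $L/i$, and let $s_2$ witness $S$-primeness of $i$ in $L$. I take $s=s_1s_2\in S$. Let $a,b\in L$ with $ab\leq j$, and pass to $a\vee i,\ b\vee i\in L/i$. Then $(a\vee i)\circ(b\vee i)=ab\vee ai\vee bi\vee i^2\vee i\leq j$.

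There are two cases. If this product is different from $i$, the weak hypothesis gives $(s_1\vee i)\circ(a\vee i)\leq j$, and hence $s_1a\leq j$, or the same with $b$. If the product equals $i$, then $ab\leq i$, and $S$-primeness of $i$ gives $s_2a\leq i\leq j$ or $s_2b\leq j$. In either case $sa\leq j$ or $sb\leq j$, because $s\leq s_1$ and $s\leq s_2$. Together with the disjointness condition, this shows that $j$ is $S$-prime.

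Part (3) is identical, except that we start from $0\neq ab\leq j$. In the case $(a\vee i)\circ(b\vee i)=i$ we then have $0\neq ab\leq i$, which is exactly the situation where weak $S$-primeness of $i$ applies.

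The main obstacle I expect is bookkeeping rather than any idea. First, a single $s$ must work uniformly for all pairs $a,b$, which is handled by taking the product $s_1s_2$ of the two witnesses. Second, identities such as $(a\vee i)(b\vee i)=ab\vee ai\vee bi\vee i^2$ use distributivity of multiplication over joins, which a general $V$-lattice need not have. I will handle this as the paper does in its earlier quotient proofs. Where only an inequality is needed, I will use only monotonicity. For instance, $ab\leq (a\vee i)(b\vee i)$ is all that is needed to pass from the quotient case \lq\lq product $=i$\rq\rq{} to $ab\leq i$.
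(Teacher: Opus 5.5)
Your proposal is correct and follows essentially the same route as the paper. It uses the same check that $\bar S$ is multiplicatively closed and disjoint from $j$, the same argument for (1) via $xy\neq 0$, and the same two-case split for (2) and (3); your case $(a\vee i)\circ(b\vee i)=i$ is exactly the paper's case $ab\leq i$. Your single witness $s=s_1s_2$ makes explicit a uniformity that the paper's proof of (2) glosses over by using $s$ in one case and $s'$ in the other. As in the paper, though, the upper bounds $(a\vee i)(b\vee i)\leq j$ and $(s\vee i)x\leq j$ need distributivity of multiplication over joins, not just monotonicity, so you are relying on distributivity exactly as the paper does.
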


\begin{proof}
	\begin{enumerate}
		\item Firstly, we will show that $\bar{S}$ is a multiplicatively closed subset of $L/i$. Since $i$ is compact and every $s \in S$ is compact, $s \vee i$ is compact for every $s \in S$. Therefore $\bar{S} \subseteq  (L/i)_*$. Let $s_1 \vee i, ~ s_2 \vee i \in \bar{S} $. therefore $(s_1 \vee i) \circ (s_2 \vee i) = s_1 s_2 \vee s_1 i \vee s_2 i \vee i^2 \vee i = s_1 s_2 \vee  i $. This gives $(s_1 \vee i) \circ (s_2 \vee i) \in \bar{S}$. Hence $\bar{S}$ is multiplicatively closed subset of $L/i$. We claim that $t' \nleq j$ for all $t' \in \bar{S}$. Suppose on the contrary that $t'_1 \leq j$ for some $t'_1 \in \bar{S}$. Therefore $t_1 \vee i \leq j$, where $t'_1 = t_1 \vee i$ and $t_1 \in S$. This gives $t_1 \leq j$, a contradiction to  $t \nleq j$ for all $t \in S$, as $j$ weakly is a $S$-prime element of $L$. Hence  $t' \nleq q$ for all $t' \in \bar{S}$.  Let $x, y \in L/i$ with $i\neq x \circ y \leq j$. 
		Therefore  $x \circ y  = xy \vee i \leq j$. This gives $xy \leq j$. If $xy=0$, then $x \circ y=xy\vee i=0\vee i=i$, a contradiction. Therefore $xy\neq 0$. As $j$ is a weakly $S$-prime element of $L$, there exists $s \in S$ such that $sx \leq j$ or $sy \leq j$.  If $sx \leq j$, we get $(s \vee i) x = sx \vee ix \leq j$, as $i \leq j$.  If $sy \leq j$, we get $(s \vee i) y = sy \vee iy \leq j$, as $i \leq j$.  
		Let $xy=0$.
		Thus, $j$ is an $\bar{S}$-primary element of $L/i$.
		
		\item Suppose that  $j$ is a weakly $\bar{S}$-prime element of $L/i$ and $i$ is an $S$-prime element of $L$. Since $j$ is a weakly $\bar{S}$-prime element of $L/i$, $t \nleq j$ for all $t \in \bar{S}$. If $s\leq j$ for some $s\in S$, then $t=s\vee i\leq j$, where $t=s\vee i\in \bar{S}$, a contradiction. Hence, $s\nleq j$ for all $s\in S$. Let $a,~b \in L$ such that $ab \leq j$. Since $i \leq j$, we get $(a \vee i ) \circ(b \vee i) = ab \vee ai \vee bi \vee i^2\vee i  = ab \vee i \leq j$. There are two cases. 
		
		First case is $ab \nleq i$ and second case is $ab \leq i$. Consider the first case $ab \nleq i$. This gives $ab \vee i \neq i$ in $L/i$.  As $(a \vee i), (b \vee i) \in L/i$ such that $(a \vee i ) \circ(b \vee i)\leq j$ and $j$ is a weakly $\bar{S}$-prime element of $L/i$.  There exists $s \vee i \in \bar{S}$ such that $(s \vee i) \circ(a \vee i) \leq j$ or $(s \vee i)\circ (b \vee i) \leq j$. This gives $sa \vee si \vee ai \vee i^2\vee i \leq j$ or $sb \vee si \vee bi \vee i^2 \vee i\leq j$.  Therefore $sa \leq sa \vee i \leq j $ or $sb \leq sb \vee i \leq j$. Now, Consider the  second case $ab \leq i$.  Since $i$ is an $S$-prime element of $L$, there exists $s' \in S$ such that $s'a \leq i \leq j$ or $ s'b \leq i \leq j$. Hence in either case $j$ is an $S$-prime element of $L$.
		
		\item Proof is similar to $(2)$.  
	\end{enumerate}
\end{proof}


\begin{rem}
	If $p_1$ is a weakly $S_1$-prime element of $V$-lattice $L_1$ and $p_2$ is a weakly $S_2$-prime element of $V$-lattice $L_2$, then $(p_1,p_2)$ need not be a weakly $(S_1\times S_2)$-prime element of $L_1\times L_2$. For this, consider $L_1=L_2=Id(\mathbb{Z})$ and $S_1=S_2=\{\mathbb{Z}\}$. Then $p_1=9\mathbb{Z}$ is a weakly $S_1$-prime element of $L_1$ and $p_2=4\mathbb{Z}$ is a weakly $S_2$-prime element of $L_2$. However, $(p_1,p_2)=(9\mathbb{Z},4\mathbb{Z})$ is not a weakly $(S_1\times S_2)$-prime element of $L_1\times L_2$ as $(0,0)\neq (9\mathbb{Z},\mathbb{Z})(\mathbb{Z},4\mathbb{Z})=(9\mathbb{Z},4\mathbb{Z})\leq (9\mathbb{Z},4\mathbb{Z})=(p_1,p_2)$, but $(9\mathbb{Z},\mathbb{Z})\nleq (p_1,p_2)$ and $(\mathbb{Z},4\mathbb{Z})\nleq (p_1,p_2)$. Thus, $(p_1,p_2)$ is not a weakly $(S_1\times S_2)$-prime element of $L_1\times L_2$.
\end{rem}

\begin{thm}
	Let $S_1$, $S_2$ be the multiplicatively closed subsets of c-lattices $L_1$, $L_2$ repectively and $p_1\in L_1$, $p_2\in L_2$ be the non-zero elements. Let $L=L_1\times L_2$, $S=S_1\times S_2$ and $p=(p_1,p_2)$. Then the following are equivalent.
	\begin{enumerate}
		\item\label{T4.29.1} $p$ is a weakly $S$-prime element of $L$.
		\item\label{T4.29.2} $p_1$ is an $S_1$-prime element of $L_1$ and $t_2\leq p_2$ for some $t_2\in S_2$ or $p_2$ is an $S_2$-prime element of $L_2$ and $t_1\leq p_1$ for some $t_1\in S_1$.
		\item\label{T4.29.3} $p$ is an $S$-prime element of $L$.
	\end{enumerate}
\end{thm}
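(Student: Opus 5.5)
The plan is to prove the cycle (\ref{T4.29.1}) $\Rightarrow$ (\ref{T4.29.2}) $\Rightarrow$ (\ref{T4.29.3}) $\Rightarrow$ (\ref{T4.29.1}). The last implication is immediate, since an $S$-prime element is weakly $S$-prime (Observation (3)). Throughout, the hypothesis that $p_1\neq 0$ and $p_2\neq 0$ is what allows us to manufacture \emph{nonzero} products below $p$, so that the weak condition can be applied.

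For (\ref{T4.29.1}) $\Rightarrow$ (\ref{T4.29.2}), let $s=(s_1,s_2)\in S$ be the element witnessing that $p$ is weakly $S$-prime. Take $a=(p_1,1)$ and $b=(1,p_2)$. Then $ab=(p_1,p_2)=p\leq p$, and $ab\neq 0$ because $p_1\neq 0$. Hence $sa=(s_1p_1,s_2)\leq p$ or $sb=(s_1,s_2p_2)\leq p$, so $s_2\leq p_2$ or $s_1\leq p_1$.

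By symmetry, assume $s_2\leq p_2$, and put $t_2=s_2$. For any $t_1\in S_1$ the element $(t_1,s_2)$ lies in $S$, and $(t_1,s_2)\nleq p$. Since $s_2\leq p_2$, this forces $t_1\nleq p_1$. In particular $p_1\neq 1$, because $1\in S_1$.

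Next we show that $p_1$ is $S_1$-prime, witnessed by $s_1$. Let $a_1b_1\leq p_1$ and consider $(a_1,1)(b_1,p_2)=(a_1b_1,p_2)$. This lies below $p$, and it is nonzero because $p_2\neq 0$. The weak condition then gives $(s_1a_1,s_2)\leq p$ or $(s_1b_1,s_2p_2)\leq p$, and reading off first coordinates yields $s_1a_1\leq p_1$ or $s_1b_1\leq p_1$. This trick of padding the second coordinate with $p_2$ to force a nonzero product is the key step. I expect it to be the only real obstacle; the naive choice $(a_1,1)(b_1,1)$ fails because it need not lie below $p$ when $p_2\neq 1$.

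For (\ref{T4.29.2}) $\Rightarrow$ (\ref{T4.29.3}), suppose $p_1$ is $S_1$-prime with witness $s_1$, and $t_2\leq p_2$ for some $t_2\in S_2$ (the other case is symmetric). Let $s=(s_1,t_2)\in S$.

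First, $p$ avoids $S$: every $(u_1,u_2)\in S$ has $u_1\nleq p_1$, hence $(u_1,u_2)\nleq p$. In particular $p$ is proper.

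Now let $(a_1,a_2)(b_1,b_2)\leq p$. Then $a_1b_1\leq p_1$, so $s_1a_1\leq p_1$ or $s_1b_1\leq p_1$. In the second coordinate we always have $t_2a_2\leq t_2\leq p_2$ and $t_2b_2\leq p_2$. Therefore $s(a_1,a_2)\leq p$ or $s(b_1,b_2)\leq p$, and $p$ is $S$-prime. This completes the cycle.
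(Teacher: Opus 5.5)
Your proof is correct and follows essentially the same route as the paper. The paper also proves the cycle $(1)\Rightarrow(2)\Rightarrow(3)\Rightarrow(1)$. It uses a nonzero product $(a,1)(1,b)\leq p$ to force one coordinate of the witness $s$ below $p$, then a padded nonzero product to transfer the weak condition to the other coordinate. The differences are cosmetic:
\begin{itemize}
\item You pad with $p_2\neq 0$ itself, whereas the paper pads with an element $u\in S_1$ lying below $p_1$, which is nonzero because $0\notin S_1$.
\item In $(2)\Rightarrow(3)$ you check explicitly that $p$ avoids $S$, which the paper leaves implicit.
\end{itemize}
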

\begin{proof}
	(\ref{T4.29.1}) $\implies$ (\ref{T4.29.2}) Assume on the contrary that $t_1\nleq p_1$ for all $t_1\in S_1$ and $t_2\nleq p_2$ for all $t_2\in S_2$. Since $p_1$ and $p_2$ are non-zero, choose a non-zero element $(a,b)\leq p$. Then $(0,0)\neq (a,1)(1,b)=(a,b)\leq p$. Since $p$ is a weakly $S$-prime element of $L$, there exists $s=(s_1,s_2)\in S$ such that $s(a,1)\leq p$ or $s(1,b)\leq p$. If $s(a,1)\leq p$, then $s_1a\leq p_1$ and $s_2\leq p_2$, a contradiction to our assumption. If $s(1,b)\leq p=(p_1,p_2)$, then $s_1\leq p_1$ and $s_2b\leq p_2$, a contradiction to our assumption. Hence, $t_1\leq p_1$ for some $t_1\in S_1$ or $t_2\leq p_2$ for some $t_2\in S_2$. Without loss of generality, assume that $t_1\leq p_1$ for some $t_1\in S_1$. Since $p$ is a weakly $S$-prime element, $t\nleq p$ for all $t\in S$. Therefore $(t_1,t_2)\nleq (p_1,p_2)$ for all $t_2\in S_2$. As $t_1\leq p_1$, we have $t_2\nleq p_2$ for all $t_2\in S_2$. Suppose $c,d\in L_2$ with $cd\leq p_2$. Choose $0\neq u\in S_1$ such that $u\leq p_1$. Then $(0,0)\neq (u,c)(1,d)\leq (p_1,p_2)=p$. This implies $s(u,c)=(s_1u,s_2c)\leq p$ or $s(1,d)=(s_1,s_2d)\leq p$. Hence $s_2c\leq p_2$ or $s_2d\leq p_2$. Thus, $p_2$ is an $S_2$-prime element of $L_2$.
	
	(\ref{T4.29.2}) $\implies$ (\ref{T4.29.3}) Assume that $p_1$ is an $S_1$-prime element of $L_1$ and $t_2\leq p_2$ for some $t_2\in S_2$. Let $a_1,b_1\in L_1$ and $a_2,b_2\in L_2$ be such that $(a_1,a_2)(b_1,b_2)\leq p$. Then $a_1b_1\leq p_1$. Since $p_1$ is an  $S_1$-prime element, there exists $t_1\in S_1$ such that $t_1a_1\leq p_1$ or $t_1b_1\leq p_1$. Let $t=(t_1,t_2)$. Then $t(a_1,a_2)=(t_1a_1,t_2a_2)\leq (p_1,p_2)=p$ or $t(b_1,b_2)=(t_1b_1,t_2b_2)\leq (p_1,p_2)=p$. Thus, $p$ is an $S$-prime element of $L$.
	
	(\ref{T4.29.3}) $\implies$ (\ref{T4.29.1}) Obvious.
\end{proof}

\begin{cor}
	Let $S_1, S_2, \dots , S_n$ be the multiplicatively closed subsets of $V$-lattices $L_1, L_2, \dots , L_n$ respectively and $p=(p_1,p_2,\dots,p_n)\in L$. Let $L=L_1\times L_2\times \dots \times L_n$ and $S=S_1\times S_2\times \dots \times S_n$. Then the following are equivalent.
	\begin{enumerate}
		\item
		$p$ is a weakly $S$-prime element of $L$.
		\item
		For some $j\in \{1,2,\dots,n\}$, $p_j$ is an $S_j$-prime element of $L_j$ and for all $i\in \{1,2,\dots,n\}\setminus \{j\}$, there exists $t_i\in S_i$ such that $t_i\leq p_i$.
	\end{enumerate}
\end{cor}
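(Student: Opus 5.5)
The plan is to prove the $n$-fold statement by mimicking Theorem \ref{T4.29.1}'s argument for two factors, working coordinatewise. Implicitly the $p_i$ are assumed non-zero, as in the two-factor theorem; without that hypothesis the statement fails (e.g.\ $p=0$ with every $L_i$ a domain and $n\ge 2$). I will assume each $p_i\neq 0$ and each $L_i$ is a $c$-lattice with identity $1$, so that elements such as $(1,\dots,a,\dots,1)$ are available.

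For (2) $\Rightarrow$ (1), I will first check that $t\nleq p$ for all $t\in S$. This holds because the $j$-th coordinate $t_j\nleq p_j$, since $p_j$ is $S_j$-prime. Next, take $a=(a_i)$ and $b=(b_i)$ with $ab\leq p$; weakness is not even needed here. Then $a_jb_j\leq p_j$, so there is $s_j\in S_j$ with $s_ja_j\leq p_j$ or $s_jb_j\leq p_j$. Put $s=(t_1,\dots,s_j,\dots,t_n)$. In every coordinate $i\neq j$ we have $t_ia_i\leq t_i\leq p_i$, so $sa\leq p$ or $sb\leq p$. Hence $p$ is $S$-prime, and in particular weakly $S$-prime.

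For (1) $\Rightarrow$ (2), I will argue in three steps.

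\textbf{Step one.} I will show that at most one index $j$ satisfies $t\nleq p_j$ for all $t\in S_j$. Suppose two indices $j\neq k$ both had this property. Let $x$ be $p$ in coordinate $j$ and $1$ elsewhere, and let $y$ be $p$ in coordinate $k$ and $1$ elsewhere. Then $xy$ has $p_j$ in coordinate $j$, $p_k$ in coordinate $k$, and $p_i$ is replaced by $1$ in the remaining coordinates. This is not $\leq p$ in general, so instead I will use $x=(p_1,\dots,p_{k-1},1,p_{k+1},\dots)$ and $y=(1,\dots,1,p_k,1,\dots,1)$. Then $xy=p\neq 0$ and $xy\leq p$. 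If $sx\leq p$, then $s_k\leq p_k$; if $sy\leq p$, then $s_j\leq p_j$. Either way we get a contradiction.

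\textbf{Step two.} I will show that at least one such $j$ exists. Otherwise some $t\in S$ would satisfy $t\leq p$, since we could choose $t_i\leq p_i$ coordinatewise, contradicting the definition of weakly $S$-prime.

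\textbf{Step three.} For the unique such $j$, I will show that $p_j$ is $S_j$-prime. Take $c,d\in L_j$ with $cd\leq p_j$. Let $X$ have $c$ in slot $j$ and $t_i$ in every other slot, and let $Y$ have $d$ in slot $j$ and $1$ elsewhere. Then $XY\leq p$, and $XY\neq 0$ as soon as some $t_it_i'$ with $i\neq j$ is nonzero. That holds because $0\notin S_i$ and $S_i$ is multiplicatively closed, and $n\ge 2$. (For $n=1$, $p_1$ is weakly prime only, so the statement needs $n\geq 2$ or the convention of Theorem \ref{T4.29.1}; I will note this.) Weak $S$-primeness then yields $s$ with $s_jc\leq p_j$ or $s_jd\leq p_j$, with one $s$ working uniformly.

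The main obstacle is constructing the nonzero products. In particular, Step one needs $p\neq 0$, which follows from the $p_i$ being nonzero, and Step three uses $0\notin S_i$.
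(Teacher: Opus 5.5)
Your proof is correct and is the coordinatewise $n$-factor version of the paper's argument for the two-factor theorem, which is how the paper intends the corollary to follow (it gives no separate proof). Your products $(p_1,\dots,1,\dots,p_n)(1,\dots,p_k,\dots,1)=p$ and $(t_1,\dots,c,\dots,t_n)(1,\dots,d,\dots,1)$ play the roles of $(a,1)(1,b)$ and $(u,c)(1,d)$ there. You are also right that the statement as written needs $p\neq 0$ and $n\geq 2$: for $n\geq 2$, $p=0$ is vacuously weakly $S$-prime, but it can never satisfy (2) because $0\notin S_i$. Your $c$-lattice assumption is superfluous, since you only use that $1$ is the identity and that $ab\leq a\wedge b$, and both hold in any $V$-lattice.
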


	\section{weakly S-primary elements}
	The notion of weakly $S$‑primary ideals in commutative rings with unity introduced by Celikel and Khashan \cite{CK} and Massaoud and Gouaid \cite{MG}.
	
	\begin{defn} 
		Let $R$ be a commutative ring and $S\subseteq R$ be a multiplicative set of $R$. An ideal $P$ of $R$ satisfying $P \cap S =\emptyset$ is said to be weakly $S$-primary if there exists an element $s\in S$ such that, whenever $a, b\in R$, $0\neq ab\in P$ implies $sa\in P$ or $sb\in \sqrt{P}$.
	\end{defn}	
	
	In this section, we introduce and study weakly $S$-primary elements in $c$-lattices.
	
	\begin{definition}
		Let $S$ be a multiplicatively closed subset of a $c$-lattice $L$. A proper element $q$ of $L$ such that $t\nleq q$ for all $t\in S$ is called a  \textit{weakly $S$-primary element} if there exists an element $s\in S$ such that for all $c,d \in L$ with $0 \neq cd\leq q$, implies $sc \leq q$ or $sd \leq \sqrt q$.	
	\end{definition}
	
	\begin{obs}
		\begin{enumerate}
			\item  If $q$ is a weakly primary element of a $c$-lattice $L$ and $S$ is a multiplicatively closed subset of $L$ such that $t\nleq q$ for all $t\in S$, then $q$ is a weakly $S$-primary element of $L$.
			
			\item Let $L$ be a $c$-lattice and $S=\{1\}$, then the weakly primary elements of $L$ and the weakly $S$-primary elements of $L$ coincide.
			
			\item  If $q$ is an $S$-primary element of a $c$-lattice $L$, then $q$ is a weakly $S$-primary element of $L$.
			
			\item If $q$ is a weakly $S$-prime element of a $c$-lattice $L$, then $q$ is a weakly $S$-primary. The converse need not  hold in general. 
			
		\end{enumerate}
	\end{obs}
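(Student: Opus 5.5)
The Observation consists of four items, each a direct comparison of definitions, so I will verify them one at a time by unwinding definitions. The only lattice facts needed are that $x\le\sqrt x$ for every $x$ in a $c$-lattice, and that $0\neq cd$ is a stronger hypothesis than $cd\le q$ alone.

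For (1), let $q$ be weakly primary, so $q\neq 1$ and $0\neq cd\le q$ implies $c\le q$ or $d\le\sqrt q$. Given that $t\nleq q$ for all $t\in S$, I take $s=1\in S$. Then $sc=c\le q$ or $sd=d\le\sqrt q$, so $q$ is weakly $S$-primary. For (2), with $S=\{1\}$ the only available choice is $s=1$, and the defining condition becomes literally the weakly primary condition. The side condition $1\nleq q$ is just properness, so the two notions coincide.

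For (3), if $q$ is $S$-primary with witness $s$, then the same $s$ works for the weakly $S$-primary condition. Indeed, $0\neq cd\le q$ in particular gives $cd\le q$, hence $sc\le q$ or $sd\le\sqrt q$. Properness and the condition $t\nleq q$ for all $t\in S$ are shared by both definitions.

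For (4), let $q$ be weakly $S$-prime with witness $s$. If $0\neq cd\le q$, then $sc\le q$ or $sd\le q\le\sqrt q$, using $q\le\sqrt q$. To justify this inequality, write $q$ as a join of compact elements, which is possible since $L$ is compactly generated; each such compact $x\le q$ satisfies $x^1\le q$, so $x\le\sqrt q$. Hence $q$ is weakly $S$-primary.

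The only real work is the counterexample showing the converse of (4) fails, and I expect this to be the main obstacle. I would take $L=Id(\mathbb{Z})$, $S=\{(1)\}$ and $q=(4)$. Since $\mathbb{Z}$ is a domain, $0\neq cd$ is automatic whenever $c,d\neq 0$. The element $(4)$ is primary, with $\sqrt{(4)}=(2)$, so it is weakly $S$-primary by (1). However, $0\neq(2)(2)\le(4)$ while $(2)\nleq(4)$, so $(4)$ is not weakly $S$-prime. Alternatively, I could use $Id(\mathbb{Z}_{16})$ with $q=(4)$, checking that $(2)(2)=(4)\neq 0$; in either case the check is a routine finite verification.
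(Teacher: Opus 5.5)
Your proposal is correct. Items (1)–(3) and the forward direction of (4) are the direct definitional checks the paper states without proof, including the use of $q\le\sqrt q$ from compact generation.

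The only real difference is the counterexample for the converse of (4). The paper relies on item (3) of the Example that follows the Observation: $L=Id(\mathbb{Z}_{12})$, $S=\{(1),(3)\}$, $q=(4)$.
\begin{itemize}
\item There $(4)$ is weakly $S$-primary, since $\sqrt{(4)}=(2)$.
\item However, $(2)(2)=(4)\neq 0$, while neither $(1)(2)=(2)$ nor $(3)(2)=(6)$ lies below $(4)$, so $(4)$ is not weakly $S$-prime.
\end{itemize}

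Your examples ($Id(\mathbb{Z})$ with $S=\{(1)\}$ and $q=(4)$, or the $\mathbb{Z}_{16}$ variant) are equally valid and simpler to check. Because $S$ is trivial, though, they only recover the classical fact that a primary element need not be weakly prime. The paper's choice shows the failure persists even when a nontrivial multiplier in $S$ is available. Either one establishes that the converse ``need not hold in general.''
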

	
	\begin{exa}
		\begin{enumerate}
			\item  Consider $L=Id(\mathbb{Z}_{24})$ and $S=\{(1), (2),(4),(8)\}$. Observe that $(6)$ is a weakly $S$-primary element of $L$ but not weakly primary element of $L$.
			
			\item Consider $L=Id(\mathbb{Z}_{10})$ and $S=\{(1)\}$. Observe that $(0)$ is a weakly $S$-primary element of $L$ but not an $S$-primary element of $L$.
			
			\item Consider $L=Id(\mathbb{Z}_{12})$ and $S=\{(1),(3)\}$. Observe that $(4)$ is a weakly $S$-primary element of $L$ but not a weakly $S$-prime element of $L$.
		\end{enumerate}
	\end{exa}

\begin{thm}[\cite{CK}, Theorem 2]\label{T5.5} Let $S$ be a multiplicatively closed subset of a ring $R$ and $I$ be an ideal of $R$ disjoint with $S$. Then following assertions are equivalent.
	
	\begin{enumerate}
		\item $I$ is a weakly $S$-primary ideal of $R$.
		
		\item There exists $s \in S$ such that for any two ideals $J, K$ of $R$, if $0 \neq JK \subseteq I $, then $sJ \subseteq \sqrt{I}$ or $s K \subseteq I$. 
	\end{enumerate}  
	
\end{thm}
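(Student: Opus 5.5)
The implication (2)$\Rightarrow$(1) is immediate. Given $a,b\in R$ with $0\neq ab\in I$, put $J=Rb$ and $K=Ra$. Then $JK=Rab$ is a nonzero ideal contained in $I$. Condition (2) gives $sJ\subseteq\sqrt{I}$ or $sK\subseteq I$, that is, $sb\in\sqrt I$ or $sa\in I$, which is exactly the element-wise definition of a weakly $S$-primary ideal.

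For (1)$\Rightarrow$(2), I would mimic the ideal-theoretic argument behind Theorem 2.7 of \cite{ABT}, i.e. the ring analogue of Theorem \ref{T4.10}. Fix the $s$ from the definition and suppose $0\neq JK\subseteq I$ with $sK\not\subseteq I$. Pick $k_0\in K$ with $sk_0\notin I$. The goal is to show $sj\in\sqrt I$ for every $j\in J$.

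The natural split is by whether products vanish.
\begin{enumerate}
\item If $jk_0\neq 0$, then $0\neq jk_0\in I$ gives $sk_0\in I$ or $sj\in\sqrt I$, and the first is excluded.
\item If $jk_0=0$, I would perturb using an element that witnesses $JK\neq 0$. Choose $j_1\in J$ and $k_1\in K$ with $j_1k_1\neq 0$, and consider the elements $(j+j_1)(k_0+k_1)$, $j(k_0+k_1)$ and $(j+j_1)k_0$. Whichever of these is nonzero lies in $I$ and yields, via (1), a conclusion of the form $s(\cdot)\in I$ or $s(\cdot)\in\sqrt I$. These conclusions then combine additively, using that $I$ and $\sqrt I$ are ideals and $s\cdot(\text{sum})-s\cdot(\text{part})$ is in them.
\end{enumerate}

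The main obstacle is the bookkeeping in case 2. One must also handle the situation where $sk_1\in I$ but $sk_0\notin I$, so that $s(k_0+k_1)\notin I$ and the cross term can be used. The cleanest route I would try first is a two-stage argument. First, show that $J_0=\{j\in J: sj\in\sqrt I\}$ and $K_0=\{k\in K: sk\in I\}$ are subgroups; they are, since $\sqrt I$ and $I$ are ideals. Second, show $J=J_0\cup J'$ and $K=K_0\cup K'$ for appropriate annihilator-type subgroups, in the style of the $((p:sx)]=((p:s)]\cup((0:x)]$ characterization proved earlier. Then a group cannot be a union of two proper subgroups, which forces $J=J_0$ or $K=K_0$ unless $JK=0$.

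Concretely, for fixed $k\in K\setminus K_0$, one has $\{j: jk\in I\}=\{j: sj\in\sqrt I\}\cup\{j: jk=0\}$ intersected with $J$. So $J\subseteq J_0$ or $Jk=0$. Collecting over all $k$, the union-of-subgroups lemma applied to $K$ then finishes the proof.
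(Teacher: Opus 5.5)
The paper does not prove this statement. It is quoted from Celikel--Khashan \cite{CK} and used only as a black box in the next theorem on $Id(R)$. So your proposal has to stand on its own, and it does.

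\textbf{(2)$\Rightarrow$(1).} Taking $J=Rb$ and $K=Ra$ is correct; it uses $1\in R$, so that $0\neq ab\in Rab$.

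\textbf{(1)$\Rightarrow$(2).} Your final paragraph is already a complete proof. Fix $s$ from the definition and take $k\in K$ with $sk\notin I$. For $j\in J$ we have $jk\in I$; if $jk\neq 0$, then (1) gives $sk\in I$ or $sj\in\sqrt I$, hence $sj\in\sqrt I$. Therefore
\[
J=J_0\cup\bigl(J\cap\operatorname{Ann}(k)\bigr),
\]
so $J=J_0$ or $Jk=0$. If $J\neq J_0$, every such $k$ lies in $\operatorname{Ann}(J)$, so
\[
K=K_0\cup\bigl(K\cap\operatorname{Ann}(J)\bigr).
\]
Hence $K=K_0$ or $JK=0$, and the latter is excluded.

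You can drop the exploratory perturbation with $(j+j_1)(k_0+k_1)$ in your case 2, since the two-subgroup lemma replaces it entirely. It is also worth saying explicitly that a single $s$ works for all pairs $J,K$. That is exactly the uniformity (2) demands, and the paper relies on it when it passes to weakly $S_L$-primary elements of $Id(R)$.

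\textbf{Comparison with the paper.} The nearest arguments in the paper are its lattice analogues: the characterisation by $(q:a)=(0:a)$ or $(q:a)\le(q:s)$, and the one by $((q:sx)]=((q:s)]\cup((0:x)]$. There the obstruction $ax=0$ is handled elementwise on compact elements through the identity $a(b\vee x)=ab\vee ax$, with joins playing the role of your sums. Your union-of-two-subgroups lemma is the packaged ring version of that step. Packaging it this way removes the case analysis. Unlike the paper's $((q:sx)]$ characterisation, it also needs no hypothesis such as $S\cap Z(R)=\emptyset$ and no detour through $s^2x$.
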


\begin{thm}
	Let $R$ be a commutative ring with identity and $S$ be a multiplicatively closed subset of $R$ disjoint from $S$. If $Q$ is a weakly $S$-primary ideal of $R$. Then $Q$ is a weakly $S_L$-primary element of $L = Id(R)$, where $S_L = \{(s) \in Id(R) \;|\; s \in S\}$, a multiplicatively closed subset of $L$.
\end{thm}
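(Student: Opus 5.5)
The plan mirrors the proof of the corresponding theorem for weakly $S$-prime ideals, with Theorem \ref{T5.5} in place of Theorem 2.7 of \cite{ABT}. First I will record that $L=Id(R)$ is a $c$-lattice. Its compact elements are the finitely generated ideals, $R$ is compact, and products of finitely generated ideals are finitely generated. Next, $S_L\subseteq L_*$ is multiplicatively closed, since $(s)(t)=(st)$ and $1\in S$ gives $(1)=R\in S_L$. Also $0\notin S_L$, because $Q\cap S=\emptyset$ and $0\in Q$ force $0\notin S$. The lattice radical $\sqrt{Q}$ in $L$ is the join of the finitely generated ideals $J$ with $J^n\subseteq Q$. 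This equals the ring radical of $Q$: every element $x$ of the ring radical gives $(x)^n=(x^n)\subseteq Q$, and conversely every element of such a $J$ is nilpotent modulo $Q$. This identification is the one point that needs care, since the definition involves $\sqrt q$.

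Disjointness is routine. $Q$ is proper because $1\in S$ and $Q\cap S=\emptyset$. If $(a)\le Q$ for some $(a)\in S_L$, then $a\in Q\cap S$, which is a contradiction. Hence $t\nleq Q$ for every $t\in S_L$.

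For the main condition I will apply Theorem \ref{T5.5}, which gives $s\in S$ such that for all ideals $J,K$ with $0\neq JK\subseteq Q$ we have $sJ\subseteq\sqrt{Q}$ or $sK\subseteq Q$. The roles of the two factors are swapped relative to the lattice definition, but this does not matter because the hypothesis $0\ne JK\subseteq Q$ is symmetric in $J$ and $K$. Let $c,d\in L$ with $0\neq cd\le Q$. I will apply the theorem to $J=d$ and $K=c$. This gives $sc\subseteq Q$ or $sd\subseteq\sqrt Q$.

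Finally, $(s)c=sc$ as ideals, so $(s)c\le Q$ or $(s)d\le\sqrt Q$ in $L$, where the latter uses the identification of radicals above. Therefore $Q$ is a weakly $S_L$-primary element of $L$, with witness $(s)$. Apart from the radical identification, I expect no obstacle.
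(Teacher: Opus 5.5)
Your proposal is correct and follows the paper's proof. Both arguments invoke Theorem \ref{T5.5} to get a single witness $s$, then translate $sI\subseteq Q$ or $sJ\subseteq\sqrt{Q}$ into $(s)I\le Q$ or $(s)J\le\sqrt{Q}$ in $Id(R)$. You also make explicit several points the paper leaves implicit: that $Id(R)$ is a $c$-lattice, that the lattice radical coincides with the ring radical, and that swapping the two factors relative to Theorem \ref{T5.5} is harmless.
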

\begin{proof}
	Since $Q \cap S=\emptyset$, we have $(s)\nsubseteq Q$ for all $s\in S$. Let $I,J\in L$ such that $0\neq IJ\subseteq Q$. Since $Q$  is a weakly $S$-primary ideal of $R$, then by Theorem \ref{T5.5},  there exists $s\in S$ such that $sI\subseteq Q$ or $sJ\subseteq \sqrt{Q}$. Therefore $(s)I\subseteq Q$ or $(s)J\subseteq \sqrt{Q}$. Thus, $Q$ is a weakly $S_L$-primary element of $L$. 
\end{proof}

\begin{prop}\label{P5.7}
	Let $S$ be a multiplicatively closed subset of a $c$-lattice $L$ and $q$ be a weakly $S$-primary element of $L$.
	\begin{enumerate}
		\item\label{P5.7.1} If $q^2\neq 0$, then $q$ is an $S$-primary element of $L$.
		\item\label{P5.7.2} If $q$ is not an $S$-primary element, then $\sqrt{q}=\sqrt{0}$. 
		\item\label{P5.7.3} If $L$ is reduced, then $q=0$ or $q$ is an  $S$-primary element of $L$.
		
	\end{enumerate}
\end{prop}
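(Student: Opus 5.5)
The plan is to transcribe the argument of Lemma \ref{L4.8}, replacing the conclusion ``$sd\le p$'' by ``$sd\le\sqrt q$'' where appropriate.

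For (\ref{P5.7.1}), take $c,d\in L$ with $cd\le q$; we need $sc\le q$ or $sd\le\sqrt q$ for a single $s\in S$, namely the one fixed by the weakly $S$-primary hypothesis. If $cd\neq 0$ there is nothing to do, so assume $cd=0$. We split into the same three cases as in Lemma \ref{L4.8}(\ref{L4.8.1}).

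\emph{Case $cq\neq0$.} Then $c(q\vee d)=cq\vee cd=cq\neq 0$, and $c(q\vee d)\le q$. The hypothesis gives $sc\le q$ or $s(q\vee d)\le\sqrt q$. In the second case $sd\le\sqrt q$.

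\emph{Case $dq\neq0$.} Now $(q\vee c)d=qd\neq 0$ and $(q\vee c)d\le q$. Hence $s(q\vee c)\le q$, which gives $sc\le q$, or $sd\le\sqrt q$. Care is needed here because the definition is asymmetric: the element carrying the $\sqrt q$-conclusion must be the second factor. With $(q\vee c)d$, the radical conclusion falls on $d$ and the $q$-conclusion on $q\vee c$, which is exactly what we want. I expect this orientation bookkeeping to be the only real subtlety.

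\emph{Case $cq=0=dq$.} Then $(q\vee c)(q\vee d)=q^2\neq0$ and this product is $\le q$. So $s(q\vee c)\le q$ or $s(q\vee d)\le\sqrt q$, giving $sc\le q$ or $sd\le\sqrt q$.

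Since $t\nleq q$ for all $t\in S$ and $q$ is proper, $q$ is $S$-primary.

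For (\ref{P5.7.2}), part (\ref{P5.7.1}) forces $q^2=0$. Hence $\sqrt0\le\sqrt q=\sqrt{q^2}=\sqrt0$. To justify $\sqrt{q}=\sqrt{q^2}$, take a compact $x$ with $x^n\le q$; then $x^{2n}\le q^2$. The reverse inclusion holds because $q^2\le q$.

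For (\ref{P5.7.3}), if $q\ne0$ then reducedness gives $q^2\neq0$, and part (\ref{P5.7.1}) applies.
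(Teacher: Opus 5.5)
Your proof is correct and follows the paper's argument. For (1) you use the same three-case split ($cq\neq 0$, $dq\neq 0$, $cq=0=dq$) with the elements $c(q\vee d)$, $(q\vee c)d$ and $(q\vee c)(q\vee d)$; for (2) you use $q^2=0\Rightarrow\sqrt q=\sqrt{q^2}=\sqrt 0$, and for (3) reducedness. Your explicit handling of the $dq\neq 0$ case, keeping the $\sqrt q$-conclusion on the second factor, is more careful than the paper's write-up, which swaps the roles of $q$ and $\sqrt q$ relative to the definition in the first case and dismisses the second case as ``similar''.
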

\begin{proof}
	(\ref{P5.7.1}) Let $c,d \in L$ such that $cd \leq q$. If $cd \neq 0$, then we are done. Let $cd=0$. Assume that $cq \neq 0$. Then $c(q \vee d) \neq 0$. Otherwise if $c (q \vee d)=0$, then $0=c(q\vee d)=cq \vee cd = cq\vee 0=cq$, which is contradiction. Therefore $0 \neq c(q\vee d)=cq\vee cd\leq q\vee q=q$. Since $q$ is a weakly $S$-primary element, there exists $s\in S$ such that $sc\leq \sqrt{q}$ or $s(q\vee d)\leq q$. This implies $sc\leq \sqrt{q}$ or $sd\leq s(q\vee d)\leq q$. Hence, $q$ is an $S$-primary element of $L$. Similarly we can prove the result for $dq\neq 0$. Assume that $cq=0=dq$. Then $(q\vee c)(q\vee d)=q^2\vee  qc\vee  qd\vee  cd =q^2\neq 0$. Also $0\neq (q\vee c)(q \vee d)=q^2\leq q$. Since $q$ is a weakly $S$-primary element, there exists $s\in S$ such that $s(q\vee c)\leq \sqrt{q}$ or $s(q\vee d)\leq q$. Hence $sc\leq s(q\vee c)\leq \sqrt{q}$ or $sd \leq s(q \vee d)\leq q$. Thus, $q$ is an $S$-primary element of $L$.
	
	(\ref{P5.7.2}) From (\ref{P5.7.1}), if $q$ is not an $S$-primary element, then $q^2=0$. Hence, $\sqrt{0}=\sqrt{q^2}=\sqrt{q}$.
	
	(\ref{P5.7.3}) Let $L$ be reduced $c$-lattice. If $q \neq 0$, then $q^2\neq 0$. Hence by (\ref{P5.7.1}), $q$ is an $S$-primary element of $L$. If $q$ is not an $S$-primary element of $L$, then by (\ref{P5.7.1}), $q^2 = 0$. Since $L$ is reduced lattice, we get $q=0$. \end{proof}

\begin{cor}
	Let $S$ be a multiplicatively closed subset of a $c$-lattice $L$ and $q$ be a weakly $S$-primary element of $L$ that is not an $S$-primary element of $L$. If $qp=p$, for $p\in L$, then $p=0$.
\end{cor}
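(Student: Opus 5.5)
The plan is to mirror the corollary proved right after Lemma \ref{L4.8}, now using Proposition \ref{P5.7} in place of Lemma \ref{L4.8}. Since $q$ is a weakly $S$-primary element that is not $S$-primary, the contrapositive of Proposition \ref{P5.7}(\ref{P5.7.1}) gives $q^2=0$.

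Next, I would use the hypothesis $qp=p$ twice, together with associativity of the multiplication:
\[
p = qp = q(qp) = (qq)p = q^2 p = 0\cdot p = 0.
\]

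The only step needing justification is $0\cdot p=0$. This holds in any multiplicative lattice, since $0\cdot p\leq 0\wedge p=0$; alternatively, it follows from distributivity over the empty join. So the argument is a two-line computation.

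There is no real obstacle here. The only point to watch is that the hypothesis "not $S$-primary" is exactly what triggers $q^2=0$ through Proposition \ref{P5.7}(\ref{P5.7.1}). Properness of $q$ and the condition $t\nleq q$ are not needed beyond what the definition already supplies.
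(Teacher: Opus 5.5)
Your proposal is correct and is essentially the paper's own proof: both obtain $q^2=0$ from Proposition \ref{P5.7}(\ref{P5.7.1}) and then compute $p=qp=q(qp)=q^2p=0\cdot p=0$. Your extra justification that $0\cdot p=0$ is a small point the paper leaves implicit.
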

\begin{proof}
	Since $q$ is not an $S$-primary element of $L$, by (\ref{P5.7.1}) of Proposition \ref{P5.7}, $q^2=0$. Therefore $0=q^2p=q(qp)=qp=p$.
\end{proof}

\begin{thm}
	Let $S$ be a multiplicatively closed subset of a $c$-lattice $L$ and $q \in L$ such that $t\nleq q$ for all $t\in S$. Then following are equivalent:
	\begin{enumerate}
		\item\label{T5.9.1} $q$ is a weakly $S$-primary element of $L$.
		\item\label{T5.9.2} There exists $s\in S$ such that $(q:a)=(0:a)$ or $(q:a)\leq (q:s)$ for each $a \nleq (\sqrt{q}:s)$.
	\end{enumerate}
\end{thm}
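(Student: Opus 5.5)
The plan is to follow the proof of Theorem \ref{T4.10} almost verbatim. The one change is that the element $a$ must now play the role of the factor that is pushed into $\sqrt q$. Throughout I use the Remark that $x b\le a \iff x\le (a:b)$ in a $c$-lattice. Applied with $\sqrt q$ in place of $a$, it gives $a\nleq(\sqrt q:s)\iff sa\nleq\sqrt q$. I also use that $L$ is compactly generated, so an inequality $u\le v$ can be checked on compact elements below $u$.

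(\ref{T5.9.1}) $\Rightarrow$ (\ref{T5.9.2}). Fix the $s\in S$ given by the definition of a weakly $S$-primary element. Let $a\nleq(\sqrt q:s)$, so that $sa\nleq\sqrt q$, and assume $(q:a)\neq(0:a)$.
\begin{enumerate}
\item Since $(0:a)\le(q:a)$ and $L$ is compactly generated, there is a compact $b\le(q:a)$ with $b\nleq(0:a)$. Thus $0\neq ab\le q$.
\item To show $(q:a)\le(q:s)$, take a compact $x\le(q:a)$, so $xa\le q$.
\item If $xa\neq0$, apply the definition with $c=x$, $d=a$. We get $sx\le q$ or $sa\le\sqrt q$. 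The second is excluded, so $x\le(q:s)$.
\item If $xa=0$, then $a(b\vee x)=ab\vee ax=ab$, so $0\neq (b\vee x)a\le q$. The definition gives $s(b\vee x)\le q$ or $sa\le\sqrt q$. Again only the first is possible, hence $sx\le s(b\vee x)\le q$.
\end{enumerate}
In either case every compact element below $(q:a)$ lies below $(q:s)$, so $(q:a)\le(q:s)$.

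(\ref{T5.9.2}) $\Rightarrow$ (\ref{T5.9.1}). Properness of $q$ and $t\nleq q$ for all $t\in S$ are given; properness follows since $1\in S$. Take the $s$ from (\ref{T5.9.2}) and let $0\neq cd\le q$. If $sd\le\sqrt q$ we are done, so assume $sd\nleq\sqrt q$, i.e. $d\nleq(\sqrt q:s)$.
\begin{enumerate}
\item From $cd\le q$ we get $c\le(q:d)$. From $cd\neq0$ we get $c\nleq(0:d)$. Hence $(q:d)\neq(0:d)$.
\item Hypothesis (\ref{T5.9.2}), applied to $a=d$, then forces $(q:d)\le(q:s)$.
\item So $c\le(q:s)$, that is, $sc\le q$.
\end{enumerate}
This is exactly the weakly $S$-primary condition.

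The main point of care is the bookkeeping of which factor goes into $q$ and which into $\sqrt q$. In the forward direction $a$ must always be placed in the "$\sqrt q$" slot of the definition, so that the excluded alternative is $sa\le\sqrt q$. I do not expect a genuine obstacle. The only delicate step is the case $xa=0$, and it is handled, as in Theorem \ref{T4.10}, by passing to $b\vee x$ to restore a nonzero product.
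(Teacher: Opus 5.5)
Your proof is correct and is essentially the paper's argument. The forward direction is the same compact-element argument, including the passage to $b\vee x$ when $xa=0$, and the converse is the same residual computation via $(q:a)$ versus $(0:a)$. The differences are cosmetic: in the converse you place $d$ rather than $c$ in the $\sqrt{q}$ slot, which matches the definition directly instead of relying on the symmetry in $c,d$, and you omit the paper's redundant second paragraph in the forward direction.
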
	
\begin{proof}
	(\ref{T5.9.1}) $\implies$ (\ref{T5.9.2}) Suppose $q$ is a weakly $S$-primary element of $L$. There exists $s\in S$ such that for all $c,d\in L$ with $0 \neq cd\leq q$, $sc\leq q$ or $sd\leq \sqrt{q}$. For this $s$, let $a\nleq (\sqrt{q}:s)$. Therefore $sa\nleq \sqrt{q}$. Assume that $(q:a)\neq (0:a)$. Then there exists $b\in L_*$ such that $b\leq (q:a)$ and $b\nleq (0:a)$. Therefore $ab\leq q$ and $ab\neq 0$. Since $sa\nleq \sqrt{q}$ and $q$ is a weakly $S$-primary element of $L$, $sb\leq q$. Now, we prove that  $(q:a)\leq (q:s)$.  Let $x \in L_*$ such that  $x  \leq (q:a)$. Then $xa\leq q$. If $xa\neq 0$, $sa\leq \sqrt{q}$ or $sx\leq q$. Since $sa\nleq \sqrt{q}$, we get $sx\leq q$. Hence $x\leq (q:s)$. If $ax=0$, then $0\neq ab=ab\vee 0=ab\vee ax=a(b\vee x)$. Since $0\neq ab\leq q$, $0\neq a(b\vee x)\leq q$. Since $sa\nleq \sqrt{q}$ and $q$ is a weakly $S$-primary element, we get $s(b\vee x)\leq q$. Therefore $sx\leq s(b\vee x)\leq q$. Hence $x\leq (q:s)$.  So in either case every compact element $\leq (q:a)$ is $\leq  (q:s)$. As $L$ is a $c$-lattice, we have $(q:a)\leq (q:s)$ for each $a\nleq (\sqrt{q}:s)$.
	
	Suppose that  $(q:a)\nleq (q:s)$ for each $a\nleq (\sqrt{q}:s)$. Therefore there exists $w \in L_*$ such that $w \leq (q:a)$ and $w \nleq (q:s)$, i.e. $aw \leq q$ and $ws \nleq q$. As $a\nleq (\sqrt{q}:s)$, we have $sa \nleq \sqrt{q}$. We want to show that $(q:a) =(0:a)$. Let $y \in L_*$ such that $y \leq (0:a)$, this gives $ay =0$. Since $q$ is a weakly $S$-primary element of $L$, $q$ is a proper element with $0\leq q$. Hence $ay =0 \leq q$ and this gives $y \leq (q:a)$. Therefore every compact element $\leq (0:a)$ is $\leq (q:a)$. As $L$ is a $c$-lattice, $(0:a) \leq (q:a)$. Now Let $z 	\in L_*$ such that $z \leq (q:a)$. Therefore $az \leq q$. If $az =0$, then $z \leq (0:a)$. If $az \neq 0$, then $az \leq q$  and $q$ is a weakly $S$-primary element such that $as \nleq \sqrt{q}$ gives $zs \leq q$, i.e. $z \leq (q:s)$. This show that  every compact element $\leq (q:a)$ is $\leq (q:s)$, a contradiction to our assumption that $(q:a)\nleq (q:s)$ for each $a\nleq (\sqrt{q}:s)$. Hence $az =0$ and hence every compact element $\leq (q:a)$ is $\leq (0:a)$. Therefore $(q:a) \leq (0:a)$. Hence $(q:a) = (0:a)$.
	
	(\ref{T5.9.2}) $\implies$ (\ref{T5.9.1}) suppose there exists $s\in S$ such that $(q:a)=(0:a)$ or $(q:a)\leq (q:s)$ for each $a\nleq (\sqrt{q}:s)$. Let $c,d\in L$ with $0 \neq cd \leq q$. Assume that $sc\nleq \sqrt{q}$. Therefore $c\nleq (\sqrt{q}:s)$. Since $cd\leq q$, $d\leq (q:c)$. Also $cd\neq 0$ implies that $d\nleq (0:c)$. So, we have $d\in L$ such that $d\leq (q:c)$ and $d\nleq (0:c)$. Therefore $(q:c)\neq (0:c)$. Since $sc\nleq \sqrt{q}$, by (\ref{T5.9.2}), $(q:c)\leq (q:s)$. Hence $d\leq (q:c)\leq (q:s)$ implies $sd\leq q$. Thus, $q$ is a weakly $S$-primary element of $L$.
\end{proof}

\begin{thm}
	Let $S$ be a multiplicatively closed subset of a $c$-lattice $L$ such that $S\bigcap Z(L)=\emptyset$ and $q \in L$ such that $t \nleq q$ for all $t \in S$. Then following are equivalent:
	
	\begin{enumerate}
		\item $q$ is a weakly $S$-primary element of $L$.
		
		\item There exists $s \in S$ such that for each $x \nleq \sqrt{(q:s)}$, $((q:sx)] = ((q:s)] \cup ((0:x)]$ for some $s \in S$.
		
		\item There exists $s \in S$ such that for each $x \nleq \sqrt{(q:s)}$, $((q:sx)] = ((q:s)]$ or $((q:sx)] = ((0:x)]$ for some $s \in S$.
	\end{enumerate} 
\end{thm}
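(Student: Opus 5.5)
The proof will follow the pattern of the analogous result for weakly $S$-prime elements. It runs around the cycle $(1)\Rightarrow(2)\Rightarrow(3)\Rightarrow(1)$, working throughout with compact elements and the down-sets $((x)]$. One small step is needed first. If $s\in S$ and $x\nleq \sqrt{(q:s)}$, then $sx\nleq \sqrt q$. Indeed, $sx\le\sqrt q$ would give $(sx)^n\le q$ for a compact-element argument, hence $s x^n\le q$ (since $s^{n}x^n\le sx^n$ is the wrong direction, I will instead use $s^n x^n \le q$, i.e. $x^n\le (q:s^n)$). To avoid this subtlety, I will use the hypothesis in the form $sx\nleq \sqrt q$, or replace $s$ by a suitable power; the cleanest route is to note directly that what the argument actually uses is $s^2x\nleq\sqrt q$, and to arrange the choice of $s$ accordingly.

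For $(1)\Rightarrow(2)$, fix the $s$ from the definition of weakly $S$-primary and let $x\nleq\sqrt{(q:s)}$. The inclusion $((q:s)]\cup((0:x)]\subseteq((q:sx)]$ is immediate. If $z\le(q:s)$ then $zsx\le q$, and if $zx=0$ then $szx=0\le q$. For the reverse inclusion, take a compact $y\le(q:sx)$. If $xy=0$, then $y\in((0:x)]$. Otherwise $S\cap Z(L)=\emptyset$ gives $0\ne s(xy)=(sx)y\le q$. Applying the definition to the pair $(y, sx)$ yields $sy\le q$ or $s(sx)\le\sqrt q$. The second alternative must be excluded using $x\nleq\sqrt{(q:s)}$: from $s^2x\le\sqrt q$ one gets $s^{2n}x^n\le q$. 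This is the main obstacle, since passing from $s^{2n}x^n\le q$ to $sx^n\le q$ is not automatic. I would first try applying the weakly $S$-primary property once more to $s^{2n-1}\cdot(sx^n)$, which is nonzero because $S\cap Z(L)=\emptyset$ and $x^n\ne0$ (otherwise $xy=0$). That gives $s^{2n}\cdot s\le\sqrt q$, impossible since no element of $S$ lies below $\sqrt q$, or $s\cdot sx^n\le q$. I would then iterate to peel off the powers of $s$, exactly as in the proof of Proposition~\ref{P3.4}, reaching $sx^n\le q$ and so $x\le\sqrt{(q:s)}$, a contradiction. Hence $sy\le q$, i.e. $y\in((q:s)]$. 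The fact that no element of $S$ lies below $\sqrt q$ follows from $t\nleq q$ for all $t\in S$ together with multiplicative closure, since $t^n\le q$ would put $t^n\in S$ below $q$.

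The step $(2)\Rightarrow(3)$ is the usual union-of-ideals argument on down-sets. Since $((q:sx)]$ is a down-set closed under finite joins and is the union of the two down-sets $((q:s)]$ and $((0:x)]$, it must equal one of them. If elements $u\in((q:s)]\setminus((0:x)]$ and $v\in((0:x)]\setminus((q:s)]$ both existed, then $u\vee v\le(q:sx)$. But $u\vee v\le(q:s)$ would force $v\le(q:s)$, and $u\vee v\le(0:x)$ would force $u\le(0:x)$, so $u\vee v$ lies in neither set, a contradiction.

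For $(3)\Rightarrow(1)$, let $0\ne cd\le q$ and suppose $sd\nleq\sqrt q$. Then $d\nleq\sqrt{(q:s)}$, because $d^n s\le q$ would give $(sd)^n\le q$. Since $S\cap Z(L)=\emptyset$, we have $0\ne scd\le q$, so $c\le(q:sd)$. Moreover $c\nleq(0:d)$ because $cd\ne0$. Hence $((q:sd)]\ne((0:d)]$, so $((q:sd)]=((q:s)]$ and $c\le(q:s)$, that is $sc\le q$. This is the required condition, with the roles of $c$ and $d$ matching the definition of weakly $S$-primary.
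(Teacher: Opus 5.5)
Your proof is correct and follows the paper's proof: the same cycle $(1)\Rightarrow(2)\Rightarrow(3)\Rightarrow(1)$, with the key step in $(1)\Rightarrow(2)$ being to reapply the weakly $S$-primary condition to a nonzero product of a power of $s$ with $x^n$ and discard the alternative $s^k\le\sqrt q$. The paper splits $s^{2n}x^n$ as $x^n\cdot s^{2n}$, which gives $sx^n\le q$ at once with no peeling. One justification is wrong, although the claim it supports is true: $x^n\neq 0$ does not follow from $xy\neq 0$, but it holds because $x^n=0$ would give $x\le\sqrt{0}\le\sqrt{(q:s)}$, contrary to the choice of $x$.
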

\begin{proof}
	$(1) \Rightarrow (2)$:  Suppose $q$ is a weakly $S$-primary element of $L$. Therefore for all $c,d \in L$ with $0 \neq cd \leq q$, there exists $s \in S$ such that $sc \leq q$   or $sd \leq \sqrt{q}$. Take this $s$. Let $(y \in L_*) \in ((q:sx)]$ i.e. $y\leq (q:sx)$, where $x \nleq \sqrt{(q:s)}$. If $xy =0$, then $y \leq (0:x)$, i.e. $y \in ((0:x)]$. So suppose that $xy \neq 0$. As $S\bigcap Z(L)=\emptyset$, $sxy \neq 0$ and $sxy \leq q$. This gives $s^2 x \leq \sqrt{q}$ or  $sy \leq q$. If $s^2 x \leq \sqrt{q}$, then $(s^{2n} x^n \neq 0) \leq q$ for some positive integer $n$. Therefore either $s^{2n+1} \leq \sqrt{q}$ or $sx^n \leq q$. Since  $t \nleq q$ for all $t \in S$,  $s^{2n+1} \leq \sqrt{q}$ is not possible. As  $x \nleq \sqrt{(q:s)}$, $sx^n \leq q$ does not hold. Hence $sy \leq q$ and this gives $y \leq (q:s)$, i.e. $y \in ((q:s)]$. In either case $y\in ((q:s)] \cup ((0:x) ]$. Therefore $((q:sx)] \subseteq ((q:s)] \cup ((0:x)]$. For the other inclusion, Let $z \in L_*$ such that $z \in ((q:s)] \cup ((0:x)]$. If $z \in ((q:s)] $, i.e. $z \leq (q:s)$, then $zs\leq q$, so $zsx\leq q$ and hence $z \leq (q:sx)$, i.e. $z \in ((q:sx)]$. If $z \in  ((0:x)]$ i.e. $z \leq  (0:x)$, then $sx =0 \leq q$, so $sxz =0 \leq q$ and hence $z \leq (q:sx)$, i.e. $z \in ((q:sx)]$. Therefore $((q:s)] \cup ((0:x)] \subseteq ((q:sx)]$ and hence $((q:sx)] = ((q:s)] \cup ((0:x)]$. 
	
	$(2) \Rightarrow (3)$: It is clear.
	
	$(3) \Rightarrow (1)$: Let $a,b \in L$ such that $0 \neq ab \leq q$. Suppose that $sa \nleq \sqrt{q}$ for all $s \in S$. Therefore $s^n a^n \nleq q$ for every positive integer $n$. So $s a^n \nleq q$, this gives $a^n \nleq (q:s)$ and hence $a \nleq \sqrt{(q:s)}$. As $(ab \neq 0) \leq q$ and  $S\bigcap Z(L)=\emptyset$, we get $(sab \neq 0) \leq q$. This gives $b \leq (q:sa)$ i.e.$ b \in ((q:sx)] = ((q:s)] \cup ((0:x)]$. Since $ab \neq 0$, we get $b \in ((q:s)]$ i.e. $b \leq (q:s)$ and hence $sb \leq q$. This proves that $q$ is a weakly primary element of $L$. 
\end{proof}

\begin{prop}
	Let $S$ be a multiplicatively closed subset of a $c$-lattice $L$ and $q \in L$ such that $t\nleq q$ for all $t\in S$. 
	\begin{enumerate}
		\item\label{P5.11.1} If $(q:s)$ is a weakly primary element of $L$ for some $s\in S$, then $q$ is a weakly $S$-primary element of $L$.
		\item\label{P5.11.2} If $S\bigcap Z(L)=\emptyset$ and $q$ is a non-zero weakly $S$-primary element of $L$, then $(q:s)$ is a weakly primary element of $L$ for some $s\in S$.
	\end{enumerate}
\end{prop}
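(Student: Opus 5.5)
The plan is to mirror the proof of the analogous statement for weakly $S$-prime elements (Lemma 4.13), replacing the second ``$\leq p$'' by ``$\leq\sqrt{\ }$'' and taking care with radicals of colon elements.

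For (1), assume $(q:s)$ is weakly primary for some $s\in S$, and let $0\neq cd\leq q$. Since $q\leq (q:s)$, we get $0\neq cd\leq (q:s)$. Hence $c\leq (q:s)$ or $d\leq\sqrt{(q:s)}$. The first gives $sc\leq q$. In the second case, take a compact $x\leq d$. Then $x^n\leq (q:s)$ for some $n$, so $sx^n\leq q$ and hence $(sx)^n\leq q$. Since $L$ is a $c$-lattice, $sx$ is compact, so $sx\leq\sqrt q$. Taking joins over compact $x\leq d$ (multiplication distributes over joins) gives $sd\leq\sqrt q$. Properness of $q$ and $t\nleq q$ are hypotheses, so $q$ is weakly $S$-primary.

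For (2), fix the $s\in S$ from the definition of weakly $S$-primary and show that $(q:s)$ is weakly primary.
\begin{enumerate}
\item Properness. If $(q:s)=1$, then $s\leq q$, a contradiction.
\item Setup. Let $0\neq cd\leq (q:s)$. Then $scd\leq q$, and $scd\neq 0$ because $S\cap Z(L)=\emptyset$: if $s(cd)=0$ with $cd\neq 0$, then $s$ would be a zero-divisor. Applying the definition to the pair $(sc,d)$ gives $s^2c\leq q$ or $sd\leq\sqrt q$.
\item Case $s^2c\leq q$. If $s^2c=0$, then $c=0$ (again since $s^2\in S$ is not a zero-divisor), contradicting $cd\neq 0$. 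So $0\neq s^2c=(s^2)(c)\leq q$. Applying the definition to the pair $(c,s^2)$ gives $sc\leq q$ or $s^3\leq\sqrt q$. The latter is impossible: it would give $s^{3n}\leq q$ with $s^{3n}\in S$. Hence $c\leq (q:s)$.
\item Case $sd\leq\sqrt q$. We must show $d\leq\sqrt{(q:s)}$. For a compact $x\leq d$ we have $(sx)^n\leq q$ for some $n$, i.e.\ $s^nx^n\leq q$. If $s^nx^n=0$, then $x^n=0$ (no zero-divisors in $S$), so $sx^n=0\leq q$. Otherwise apply the definition to the pair $(x^n,s^n)$: either $sx^n\leq q$, or $s^{n+1}\leq\sqrt q$, and the latter is impossible as before. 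So $x^n\leq (q:s)$, hence $x\leq\sqrt{(q:s)}$. Joining over compact $x$ gives $d\leq\sqrt{(q:s)}$.
\end{enumerate}

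The main obstacle is the radical case in (2). It requires splitting off the zero product $s^nx^n=0$, which the $S\cap Z(L)=\emptyset$ hypothesis handles. It also relies on the fact that $s^m\leq\sqrt q$ would force some power of $s$ (an element of $S$) below $q$; here compactness of $s^m$ in a $c$-lattice is used. The nonzero hypothesis on $q$ is apparently not needed beyond what is shown above, though I would keep it as stated.
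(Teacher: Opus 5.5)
Your proposal is correct and follows the paper's proof essentially step for step. In (1) you pass through $q\le (q:s)$. In (2) you fix the $s$ from the definition, use $S\cap Z(L)=\emptyset$ to get $scd\neq 0$, split into the cases $s^2c\le q$ and $sd\le\sqrt q$, and rule out $s^3\le\sqrt q$ and $s^{n+1}\le\sqrt q$ exactly as the paper does. Your reduction to compact $x\le d$, and your explicit check that $(q:s)$ is proper, are slightly more careful than the paper, which tacitly takes powers of the possibly non-compact elements $b$ and $sd$ lying under a radical.
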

\begin{proof}
	(\ref{P5.11.1}) Suppose $(q:s)$ is a weakly primary element of $L$ for some $s\in S$. Let $0 \neq ab \leq q$. As $q \leq (q:s)$, we have  $(ab \neq 0 ) \leq (q:s)$. Therefore $a\leq (q:s)$ or $b\leq \sqrt{(q:s)}$. This implies $sa\leq q$ or $b^ms\leq q$ for some positive integer $m$. Hence $sa\leq q$ or $(bs)^m\leq b^ms\leq q$ for some positive integer $m$. This proves $sa\leq q$ or $sb\leq \sqrt{q}$. Thus, $q$ is a weakly $S$-primary element of $L$.
	
	(\ref{P5.11.2}) Let $q$ be a non-zero weakly $S$-primary element of $L$. Then there exists $s\in S$ such that for all $a,b\in L$ with $0 \neq ab \leq q$, $sa\leq \sqrt{q}$ or $sb\leq q$. Take this $S$. Let $ c, d \in L$ such that $ cd \neq 0 $ and   $ cd \leq (q:s)$. Therefore $cds\leq q$. Since $S\bigcap Z(L)=\emptyset$, $cds\neq 0$. Therefore $s^2c\leq q$ or $sd\leq \sqrt{q}$.
	
	Case I: If $s^2c\leq q$. Then $s^2 c = 0$ or $s^2 c \neq 0$. If $s^2c=0$, then as $S\bigcap Z(L)=\emptyset$ we get $c=0$. Hence $cd=0$, a contradiction. Therefore $ 0 \neq s^2 c\leq q$. Hence $s^3\leq \sqrt{q}$ or $sc\leq q$. Since $t\nleq q$ for all $t\in S$, we get  $s^3\nleq \sqrt{q}$. Therefore $sc\leq q$. Hence $c\leq (q:s)$.
	
	Case II: If	$sd\leq \sqrt{q}$. Then $s^nd^n=(sd)^n\leq q$ for some positive integer $n$. If $s^nd^n=0$, then as $S\bigcap Z(L)=\emptyset$, we get $d^n=0$. Hence $d\leq \sqrt{0}\leq \sqrt{(q:s)}$. So $s^nd^n\neq 0$. Since $q$ is a  weakly $S$-primary element of $L$, $s^{n+1}\leq \sqrt{q}$ or $sd^n\leq q$. As $s^{n+1}\nleq \sqrt{q}$, we get $sd^n\leq q$. This implies $d^n\leq (q:s)$. Hence $d\leq \sqrt{(q:s)}$. Thus, $(q:s)$ is a weakly primary element of $L$.
\end{proof}

\begin{prop}
	Let $L$ be a $c$-lattice and $S$ and $S'$  be multiplicatively closed subsets of $L$ with  $S \subseteq S'$ and for any $s \in S'$ there exists an element $t \in S'$ such that $st \in S$.  If $q$ is a weakly $S'$-primary element of $L$, then $q$ is a weakly $S$-primary element of $L$.
\end{prop}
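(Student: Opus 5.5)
We argue exactly as in the analogous lemma for weakly $S$-prime elements in Section 4, using the condition that every element of $S'$ can be multiplied into $S$. Let $q$ be a weakly $S'$-primary element of $L$, and fix the element $s'\in S'$ from the definition. Then for all $c,d\in L$ with $0\neq cd\leq q$ we have $s'c\leq q$ or $s'd\leq \sqrt q$. By hypothesis there is $t\in S'$ with $s:=s't\in S$. We claim that this single $s$ witnesses that $q$ is weakly $S$-primary.

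First we check the side conditions. $q$ is proper because it is weakly $S'$-primary. For every $u\in S$ we have $u\in S'$, since $S\subseteq S'$; hence $u\nleq q$, because $q$ satisfies this condition for all elements of $S'$.

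Next we verify the defining implication. Let $c,d\in L$ with $0\neq cd\leq q$. Then $s'c\leq q$ or $s'd\leq\sqrt q$. In a $V$-lattice (in particular a $c$-lattice) we have $xy\leq x\wedge y\leq x$. Together with monotonicity of multiplication this gives
\[
sc=t(s'c)\leq s'c\leq q \quad\text{or}\quad sd=t(s'd)\leq s'd\leq \sqrt q .
\]
Hence $sc\leq q$ or $sd\leq\sqrt q$, so $q$ is a weakly $S$-primary element of $L$.

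There is no real obstacle here. The only point needing care is that the choice of $s$ must be uniform in $c,d$. This holds because $s'$ comes from the definition and is independent of $c,d$, and $t$ depends only on $s'$. Note also that $\sqrt q$ is unchanged, since it depends only on $q$ and not on the multiplicative set.
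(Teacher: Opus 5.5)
Your proof is correct and is essentially the paper's argument: take the witness $s'\in S'$, choose $t\in S'$ with $s't\in S$, and use $xy\leq x$ to pass from $s'c\leq q$ (or $s'd\leq\sqrt q$) to $s'tc\leq q$ (or $s'td\leq\sqrt q$). Your version is slightly more careful than the paper's on two points. You fix $s'$ before quantifying over $c,d$, so the witness is uniform in $c,d$, and you explicitly check that $u\nleq q$ for all $u\in S$ via $S\subseteq S'$.
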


\begin{proof} Let $c,d \in L$ such that $0 \neq cd \leq q$. Since $q$ is a weakly $S'$- primary element of $L$, there exists an element $s \in S'$ such that $sc \leq q$ or $sd \leq \sqrt{q}$. Therefore $st'c \leq q$ or $st'd \leq \sqrt{q}$  for all $t' \in S'$. As there exists $t \in S'$ such that $ st \in S$, we get $\bar{s}c \leq q$ or $\bar{s}d \leq \sqrt{q}$, where $\bar{s} =s t' \in S$. This proves that $q$ is a weakly $S$-primary element of $L$.  
\end{proof}

\begin{prop}\label{imply}
	Let $L$ be a $c$-lattice and $S$ and $S'$  be multiplicatively closed subsets of $L$ such that $S \subseteq S'$.  If $q$ is a weakly $S$-primary element of $L$, then $q$ is a weakly $S'$-primary element of $L$.
\end{prop}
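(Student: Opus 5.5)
The plan is to argue directly from the definition of a weakly $S$-primary element, exactly as in Lemma \ref{L4.20} for the weakly $S$-prime case. The key observation is that the defining condition of a weakly $S$-primary element is existential in the multiplier $s$. Enlarging the multiplicatively closed set from $S$ to $S'$ can therefore only make the condition easier to satisfy, because the same witness still lies in the larger set.

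First, I would fix $q$, a weakly $S$-primary element of the $c$-lattice $L$. By definition, $q$ is proper, and there is some $s\in S$ with the following property: for all $c,d\in L$ with $0\neq cd\leq q$, we have $sc\leq q$ or $sd\leq \sqrt q$. Since $S\subseteq S'$, this same $s$ belongs to $S'$. The implication ``$0\neq cd\leq q \Rightarrow sc\leq q$ or $sd\leq\sqrt q$'' involves only $s$, $q$ and $\sqrt q$, and none of these depends on which multiplicatively closed set we regard $s$ as belonging to. Hence $s$ serves verbatim as the required element of $S'$. Properness of $q$ is likewise unchanged.

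The only genuine obstacle is the side condition in the definition of a weakly $S'$-primary element, namely that $t\nleq q$ for all $t\in S'$. This does not follow from $t\nleq q$ for all $t\in S$, since $S'$ may contain more elements. I would handle this in the same way as the paper does implicitly in Lemma \ref{L4.20}: read the statement as applying to those $q$ that also satisfy $t\nleq q$ for every $t\in S'$. This is the standing disjointness hypothesis under which being ``weakly $S'$-primary'' is even being asked. I would state this explicitly at the start of the proof. Under that reading, the three requirements (properness, the disjointness from $S'$, and the existence of a witness in $S'$) are all verified, and the proposition follows.

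Finally, I would remark that the converse fails in general, by the same kind of example as in the remark after Lemma \ref{L4.20}. The converse holds under the extra ``cofinality'' hypothesis of the preceding proposition, which lets one push a witness from $S'$ back into $S$.
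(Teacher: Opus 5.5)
Your argument is the same as the paper's proof, which just says the result follows from the definition: the witness $s\in S$ already lies in $S'$, and the condition it must satisfy makes no reference to the ambient set. You are also right that the requirement $t\nleq q$ for all $t\in S'$ is not automatic and must be assumed explicitly; the paper passes over it, and without it the statement is false (take $L=Id(\mathbb{Z})$, $S=\{(1)\}$, $S'=\{(2^n)\mid n\geq 0\}$ and $q=(2)$, which is weakly $S$-primary even though $(2)\in S'$ and $(2)\leq q$).
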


\begin{proof}
	Follows from the definition of a weakly $S$-primary element.
\end{proof}

\begin{rem}
	Converse of the Proposition \ref{imply} need not be true in general. Let $L=Id(\mathbb{Z}_{12})$, an ideal lattice of ring $\mathbb{Z}_{12}$. Then $L$ is a $c$-lattice. If $S =\{1\} \subseteq S' =\{1,2,4\}$ is a multiplicatively closed subsets of $L$, then $(6)$ is a weakly $S'$-primary element of $L$, but $(6)$ is  not a weakly $S$-primary element of $L$. 
\end{rem}

\begin{prop}
	Let $S$ be a multiplicatively closed subset of a $c$-lattice $L$ and $q \in L$ such that $t\nleq q$ for all $t\in S$. Then $q$ is a weakly $S$-primary element of $L$ if and only if $q$ is a weakly $S^*$-primary element of $L$, where $S^*$ is a saturation of $S$.
\end{prop}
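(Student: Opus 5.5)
The plan is to mirror the argument given earlier for the corresponding weakly $S$-prime statement, with the radical handled in the second alternative of the definition.

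For the forward direction, recall from the definition of the saturation that $S \subseteq S^*$ and that $S^*$ is multiplicatively closed. If $q$ is a weakly $S$-primary element of $L$, then Proposition \ref{imply}, applied with $S' = S^*$, shows at once that $q$ is a weakly $S^*$-primary element of $L$.

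For the converse, suppose $q$ is a weakly $S^*$-primary element of $L$.

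\begin{enumerate}
\item First I will check the side conditions in the definition. The element $q$ is proper by hypothesis. The standing assumption of the statement gives $t\nleq q$ for all $t\in S$, which is exactly what is required for $S$.

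\item Next I fix the witness. By the definition there is $s^*\in S^*$ such that, for all $c,d\in L$ with $0\neq cd\leq q$, we have $s^*c\leq q$ or $s^*d\leq \sqrt q$.

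\item By the definition of $S^*$, there is $y\in L_*$ with $s=s^*y\in S$. This $s$ does not depend on $c,d$, so it can serve as the single witness the definition of weakly $S$-primary demands.

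\item Finally, let $c,d\in L$ with $0\neq cd\leq q$. Using axiom (4) of a $V$-lattice, $xy\leq x\wedge y\leq x$, together with commutativity and monotonicity, I get
\[
sc=s^*yc\leq s^*c\leq q \quad\text{or}\quad sd=s^*yd\leq s^*d\leq \sqrt q .
\]
Hence $q$ is a weakly $S$-primary element of $L$.
\end{enumerate}

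There is no real obstacle here. The only point needing care is that the chosen $s$ is uniform over all pairs $c,d$, and this holds because $y$ is chosen depending only on $s^*$.
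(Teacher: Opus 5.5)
Your proposal is correct and follows the paper's proof essentially verbatim. The forward direction uses Proposition~\ref{imply} with $S'=S^*$, and the converse replaces the witness $s^*\in S^*$ by $s=s^*y\in S$ and uses $s^*yc\le s^*c$ and $s^*yd\le s^*d$; the one detail both you and the paper leave implicit in the forward direction is that $t\nleq q$ for every $t\in S^*$, which holds because $ty\in S$ for some $y\in L_*$ and $ty\le t$.
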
	
\begin{proof}
	Since $S\subseteq S^*$, if $q$ is a weakly $S$-primary element of $L$, then by Proposition \ref{imply}, $q$ is a weakly $S^*$-primary element of $L$.
	Conversely, suppose $q$ is a weakly $S^*$-primary element of $L$. Therefore there exists $s^*\in S^*$ such that for all $c,d\in L$ such that $ 0 \neq cd\leq q$, $s^*c\leq q$ or $s^*d\leq \sqrt{q}$. Let $a,b\in L$ such that $0 \neq ab \leq q$. Therefore $s^*a\leq q$ or $s^*b\leq \sqrt{q}$. As $s^*\in S^*$, there exists $y\in L_*$ such that $s^*y\in S$. Let $s=s^*y$. Then $sa=s^*ya\leq s^*a\leq q$ or $sb=s^*yb\leq s^*b\leq \sqrt{q}$. Thus, $q$ is a weakly $S$-primary element of $L$.
\end{proof}

\begin{prop}
	Let $S$ be a multiplicatively closed subset of a $c$-lattice $L$. If $q$ is a  weakly $S$-primary element of $L$ and $0$ is an $S$-primary element of $L$, then $\sqrt{q}$ is an $S$-prime element of $L$.
\end{prop}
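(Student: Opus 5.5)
The plan is to reduce the statement to results already proved for $S$-primary elements. I will first show that $q$ is in fact an $S$-primary element of $L$. Then I will invoke the earlier proposition of Section 3, which says that if $q$ is an $S$-primary element of a $c$-lattice, then $\sqrt q$ is an $S$-prime element. This mirrors the earlier lemma in Section 4, where a weakly $S$-prime element $p$ was shown to be $S$-prime when $0$ is $S$-prime.

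\emph{Step 1: $q$ is $S$-primary.} Since $q$ is weakly $S$-primary, it is proper and $t\nleq q$ for all $t\in S$. Fix $s_1\in S$ witnessing the weakly $S$-primary property of $q$, and $s_2\in S$ witnessing the $S$-primary property of $0$. Put $s=s_1s_2\in S$, and let $c,d\in L$ with $cd\leq q$. I split into two cases.
\begin{enumerate}
\item If $cd\neq 0$, then $s_1c\leq q$ or $s_1d\leq \sqrt q$.
\item If $cd=0$, then $s_2c\leq 0\leq q$ or $s_2d\leq \sqrt 0\leq \sqrt q$.
\end{enumerate}
Since $s=s_1s_2\leq s_1\wedge s_2$, monotonicity of multiplication gives $sc\leq s_ic$ and $sd\leq s_id$. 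Hence in every case $sc\leq q$ or $sd\leq \sqrt q$, so $q$ is $S$-primary with the uniform witness $s$.

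\emph{Step 2: $\sqrt q$ is $S$-prime.} Applying the proposition ``if $q$ is $S$-primary then $\sqrt q$ is $S$-prime'' yields the conclusion. That proposition's proof only checks the multiplicative condition, so I would also verify the side conditions on $\sqrt q$ explicitly.
\begin{enumerate}
\item For $t\nleq\sqrt q$: suppose $t\leq\sqrt q$ for some $t\in S$. Since $t$ is compact, $t$ lies below a finite join $x_1\vee\cdots\vee x_k$ of compact elements with $x_i^{n_i}\leq q$. Expanding the power, $t^N\leq (x_1\vee\cdots\vee x_k)^N\leq q$ for $N$ large enough. But $t^N\in S$, contradicting $t\nleq q$ for all $t\in S$.
\item For properness: this follows from the previous item, because $1\in S$ gives $1\nleq \sqrt q$.
\end{enumerate}

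The main, though mild, obstacle is this bookkeeping on $\sqrt q$, in particular the compactness argument that $t\leq\sqrt q$ forces some power of $t$ below $q$. The $c$-lattice hypotheses (compact generation and closure of compact elements under products) are exactly what make that step work. Everything else is direct from the definitions and the cited proposition.
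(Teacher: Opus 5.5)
Your argument is correct, but it is organized differently from the paper's. The paper never shows that $q$ itself is $S$-primary. It checks $S$-primeness of $\sqrt q$ directly: it passes from $ab\le\sqrt q$ to $a^mb^m=(ab)^m\le q$, and only then splits into the cases $a^mb^m\neq 0$ (using the witness $s_1$ for $q$) and $a^mb^m=0$ (using the witness $s_2$ for $0$). It then returns from $s_1a^m\le q$ or $s_1b^m\le\sqrt q$ to the radical.

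You make the same case split one level down, on $cd\le q$. This upgrades $q$ to an $S$-primary element with witness $s_1s_2$, and the Section 3 proposition then handles the radical. It is exactly the pattern of the Section 4 lemma that a weakly $S$-prime element is $S$-prime when $0$ is $S$-prime. Your route proves a strictly stronger intermediate fact and avoids repeating the power computation. Your compactness argument for $t\nleq\sqrt q$ also justifies a step the paper only asserts. The paper's route, by contrast, is self-contained.

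One caveat applies to both routes. The step ``$cd\le\sqrt q$ implies $(cd)^n\le q$ for some $n$'' is used in the Section 3 proposition you cite and in the paper's direct proof. It is valid only when $cd$ is compact. In a $c$-lattice it can be repaired with the same witness $s$:
\begin{enumerate}
\item Suppose $sc\nleq\sqrt q$, and pick a compact $x\le c$ with $sx\nleq\sqrt q$.
\item For each compact $y\le d$, $xy$ is compact and $xy\le\sqrt q$, so $x^ny^n\le q$ for some $n$.
\item The $S$-primary property gives $sx^n\le q$ or $sy^n\le\sqrt q$. The first forces $sx\le\sqrt q$, which is excluded. The second gives $(sy)^n\le\sqrt q$, and since $sy$ is compact this forces $sy\le\sqrt q$.
\item Since $d$ is the join of such compact $y$, join distributivity gives $sd\le\sqrt q$.
\end{enumerate}
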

\begin{proof}
	Since $q$ is a  weakly $S$-primary element of $L$, $t\nleq q$ for all $t\in S$.
	If $t\leq \sqrt{q}$ for some $t\in S$, then $t^n\leq q$ for some positive integer $n$, a contradiction to $t\nleq q$ for all $t\in S$. Therefore $t\nleq \sqrt{q}$ for all $t\in S$. Let $a,b\in L$ with $ab\leq \sqrt{q}$. Therefore $a^mb^m=(ab)^m\leq q$ for some positive integer $m$. If $a^mb^m\neq 0$, then there exists $s_1\in S$ such that $s_1a^m\leq q$ or $s_1b^m\leq \sqrt{q}$. Therefore $(s_1a)^m\leq s_1a^m\leq q$ or $(s_1b)^m=s_1b^m\leq \sqrt{q}$. This implies $s_1a\leq \sqrt{q}$ or $s_1b\leq \sqrt{q}$. Assume that $a^mb^m= 0$. Since $0$ is an $S$-primary element of $L$, there exists $s_2\in S$ such that $s_2a^m= 0$ or $s_2b^m\leq \sqrt{0}$. Therefore $s_2a\leq \sqrt{0}\leq \sqrt{q}$ or $s_2b\leq \sqrt{0}\leq \sqrt{q}$. Choose $s=s_1s_2$. Then for all $c,d\in L$ with $cd\leq \sqrt{q}$, $sc\leq \sqrt{q}$ or $sd\leq \sqrt{q}$. Thus, $\sqrt{q}$ is an $S$-prime element of $L$.
\end{proof}

\begin{prop}
	Let $S$ be a multiplicatively closed subset of a $c$-lattice $L$ and $q$ is a  weakly $S$-primary element of $L$. If $p$ is any element of $L$ such that   $t\leq p$ for some $t\in S$, then $pq$ and  $p\wedge q$ are  weakly $S$-primary elements of $L$.
\end{prop}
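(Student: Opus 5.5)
The plan follows the corresponding lemma for weakly $S$-prime elements in Section 4, with the radical now entering on one side. Fix $t\in S$ with $t\le p$, and let $s\in S$ be the element given by the weak $S$-primarity of $q$.

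\textbf{Step 1: the nontrivial-element condition.} Since $pq\le p\wedge q\le q$ and $u\nleq q$ for every $u\in S$, we get $u\nleq pq$ and $u\nleq p\wedge q$ for all $u\in S$. As $1\in S$, both elements are proper.

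\textbf{Step 2: the element $pq$.} Let $c,d\in L$ with $0\ne cd\le pq$. Then $0\ne cd\le q$, so $sc\le q$ or $sd\le\sqrt q$. In the first case, $tsc\le pq$, because $t\le p$ and multiplication is monotone. In the second case we need $tsd\le\sqrt{pq}$. Two facts give this. First, $\sqrt{pq}=\sqrt{p\wedge q}=\sqrt p\wedge\sqrt q$, the standard radical identity in $c$-lattices, already used in the paper for finite meets. Second, $\sqrt p=1$, since $t\le p$ and $t$ is compact with $t\le\sqrt p$, and $\sqrt p$ is an element containing $t\in S$. Rather than arguing $\sqrt p=1$, I would simply note $tsd\le sd\le\sqrt q$. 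For compact $x\le tsd$, we have $x^n\le q$ for some $n$, so $(x^{n})^2\le x^n\,x^n$, and $x^n\cdot t^n\le qp$. More directly, $(tsd)^n\le t^n (sd)^n\le p\,q$ once $(sd)^n\le q$ on compact pieces. This is cleanest via the identity $\sqrt{pq}=\sqrt p\wedge\sqrt q\ge t\wedge\sqrt q\ge tsd$, using $t\le p\le\sqrt p$ and $tsd\le t$, $tsd\le\sqrt q$.

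So the witness is $ts\in S$: either $(ts)c\le pq$ or $(ts)d\le\sqrt{pq}$. Hence $pq$ is weakly $S$-primary.

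\textbf{Step 3: the element $p\wedge q$.} Take $0\ne cd\le p\wedge q$. Then $cd\le q$, so $sc\le q$ or $sd\le\sqrt q$. Either $tsc\le pq\le p\wedge q$, or $tsd\le t\wedge\sqrt q\le\sqrt p\wedge\sqrt q=\sqrt{p\wedge q}$. The same witness $ts$ works.

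\textbf{Main obstacle.} The only nonroutine point is the radical identity $\sqrt{pq}=\sqrt{p\wedge q}=\sqrt p\wedge\sqrt q$ in a $c$-lattice. I would verify it on compact elements. For $x\le\sqrt p\wedge\sqrt q$ compact, compactness gives $x^m\le p$ and $x^k\le q$, hence $x^{m+k}\le pq$. The reverse inclusions are immediate from $pq\le p\wedge q\le p,q$. This is where closure of compacts under products is used.
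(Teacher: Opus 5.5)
Your argument is correct and is essentially the paper's proof: you use the same witness $ts$ (the paper's $t's$), with $tsc\le pq\le p\wedge q$ in one case and $tsd\le t\wedge\sqrt q\le\sqrt p\wedge\sqrt q=\sqrt{pq}=\sqrt{p\wedge q}$ in the other, which matches the paper's chain $t'sd\le p\sqrt q\le\sqrt q\wedge\sqrt p$. You should delete the exploratory middle of Step 2, though, because the aside $\sqrt p=1$ is false in general (in $Id(\mathbb{Z})$ with $S=\{(2^n)\mid n\ge 0\}$ and $p=(2)$ one has $\sqrt p=(2)$), and your final chain does not need it.
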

\begin{proof} Since $t \nleq q$ for all $t \in S$, we have  $t\nleq p\wedge q$  and $t \nleq pq$ for all $t\in S$. let $c,d\in L$ with $0 \neq cd \leq p q$. Then $ 0 \neq cd\leq q$. Since $q$ is a  weakly $S$-primary element of $L$, there exists $s\in S$ such that $sc\leq q$ or $sd\leq \sqrt{q}$. Take $t'\in S$ such that $t'\leq p$. Then $t'sc\leq pq $ or $t'sd \leq \sqrt{q}p \leq  \sqrt{pq}$. Hence, $p q$ is a weakly $S$-primary element of $L$.
	
Now, let $a,b\in L$ with $0 \neq ab\leq p\wedge q$. Then $ 0 \neq ab\leq q$. Since $q$ is a  weakly $S$-primary element of $L$, there exists $s'\in S$ such that $s'a\leq q$ or $s'b\leq \sqrt{q}$. Take $t^{''}\in S$ such that $t^{''}\leq p$. Then $t^{''}s'a\leq pq \leq  p\wedge q$ or $t^{''}s'b\leq \sqrt{q}p \leq  \sqrt{q}\wedge p \leq \sqrt{q}\wedge \sqrt{p}=\sqrt{p\wedge q}$. Hence, $p\wedge q$ is a weakly $S$-primary element of $L$. 
\end{proof}

\begin{prop}
	Let $S$ be a multiplicatively closed subset of a $c$-lattice $L$ and $q_1,q_2,\dots,q_n$ be   weakly $S$-primary elements of $L$. If $\sqrt{q_i}=\sqrt{q_j}$ for all $i,j=1,2,\dots,n$, then $\displaystyle q=\bigwedge_{i=1}^{n}q_i$ is a weakly $S$-primary element of $L$. In particular, if $q_1,q_2,\dots,q_n$ are not an $S$-primary element of $L$, then  $q$ is a weakly $S$-primary element of $L$. 
\end{prop}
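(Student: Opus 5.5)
The plan is to imitate the classical ring-theoretic argument that a finite intersection of primary ideals with a common radical is again primary. The main device is to multiply the witnessing elements together.

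First, the non-degeneracy conditions. Since radicals commute with finite meets in a $c$-lattice (as already used in the lemma on $S$-prime elements), we get $\sqrt{q}=\sqrt{q_1}\wedge\cdots\wedge\sqrt{q_n}=\sqrt{q_1}$. Because $q\le q_1$ and $q_1$ is proper with $t\nleq q_1$ for all $t\in S$, the element $q$ is proper and $t\nleq q$ for every $t\in S$.

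Next, for each $i$ choose $s_i\in S$ witnessing that $q_i$ is weakly $S$-primary, and put $s=s_1s_2\cdots s_n\in S$. Let $c,d\in L$ with $0\neq cd\le q$. Then $0\neq cd\le q_i$ for every $i$, so for each $i$ either $s_ic\le q_i$ or $s_id\le\sqrt{q_i}=\sqrt{q}$. Split into two cases.
\begin{enumerate}
\item If $s_jd\le\sqrt{q}$ for some index $j$, then $sd\le s_jd\le\sqrt{q}$, since $s\le s_j$ by property (4) of the multiplication.
\item Otherwise $s_ic\le q_i$ for every $i$. Then $sc\le s_ic\le q_i$ for all $i$, hence $sc\le\bigwedge_i q_i=q$.
\end{enumerate}
In either case the defining condition holds with this single $s$, so $q$ is weakly $S$-primary.

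The main obstacle is ensuring that one element $s$ works uniformly for all pairs $c,d$ and all indices $i$. Taking the product $s$ resolves this, because $s\le s_i$ for every $i$. The radical equality $\sqrt{q}=\sqrt{q_i}$ is exactly what allows the $d$-alternative for a single index to pass to $q$.

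For the ``in particular'' clause, we use Proposition \ref{P5.7}(\ref{P5.7.2}). If none of the $q_i$ is $S$-primary, then $\sqrt{q_i}=\sqrt{0}$ for every $i$, so the radicals all coincide and the main statement applies.
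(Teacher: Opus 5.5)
Your proposal is correct and is essentially the paper's proof. It uses the same product witness $s=s_1s_2\cdots s_n$, the same observation $s\le s_j$ combined with $\sqrt{q}=\sqrt{q_j}$, and the same appeal to Proposition~\ref{P5.7}(\ref{P5.7.2}) for the ``in particular'' clause; your direct case split (either some $s_jd\le\sqrt{q}$, or else $s_ic\le q_i$ for every $i$) is simply the contrapositive of the paper's ``assume $sa\nleq q$, so $sa\nleq q_j$ for some $j$'' argument, stated a little more cleanly.
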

\begin{proof}
	Since $q_1,q_2,\dots,q_n$ are  weakly $S$-primary elements of $L$, $t\nleq q_i$ for all $t\in S$; $i=1,2,\dots,n$. Hence $\displaystyle t\nleq \bigwedge_{i=1}^{n}q_i=q$ for all $t\in S$. Also for each $i$, there exists $s_i\in S$ such that for all $c,d\in L$ with $ (cd \neq 0) \leq q_i$, $s_ic\leq q_i$ or $s_id\leq \sqrt{q_i}$. Choose $\displaystyle  s=\prod_{i=1}^{n}s_i$. Let $a,b\in L$ with $ (ab \neq 0)\leq q$. Assume that $sa\nleq q$. Then $sa\nleq q_j$ for some $j$. If $s_ja\leq q_j$, then $\displaystyle sa=\bigg(\prod_{i=1}^{n}s_i\bigg)a\leq s_ja\leq q$, a contradiction. Hence $s_ja\nleq q_j$. Therefore $s_jb\leq \sqrt{q_j}$. Since $\sqrt{q_i}=\sqrt{q_j}$ for all $i,j=1,2,\dots,n$, $\displaystyle \sqrt{q}=\sqrt{\bigwedge_{i=1}^{n}q_i}=\bigwedge_{i=1}^{n}\sqrt{q_i}=\sqrt{q_i}$ for each $i=1,2,\dots,n$. Hence $s_jb\leq \sqrt{q}$. This implies $sb\leq s_jb\leq \sqrt{q}$. Thus, $q$ is a weakly $S$-primary element of $L$. In particular, if $q_1,q_2,\dots,q_n$ are not an $S$-primary element of $L$, then by (\ref{P5.7.2}) of Proposition \ref{P5.7}, $\sqrt{q_i}=\sqrt{0}$ for each $i=1,2,\dots, n$. Hence $\sqrt{q_i}=\sqrt{q_j}$ for all $i,j=1,2,\dots,n$. Thus, $q$ is a weakly $S$-primary element of $L$.
\end{proof}

\begin{prop}
	Let $S$ be a multiplicatively closed subset of a $c$-lattice $L$ and $i,~q$ are two elements of $L$ such that $i \in L_*$ and $i \leq q$.
	
	\begin{enumerate}
		\item If $q$ is a weakly $S$-primary element of $L$, then $q$ is a weakly $\bar{S}$-primary element of $L/i$,  where $\bar{S} = \{s \vee i \;|\; s\in S\}$.
		
		\item If $q$ is a weakly $\bar{S}$-primary element of $L/i$ and $i$ is an $S$-primary element of $L$, then $q$ is an $S$-primary element of $L$.
		
		\item If $q$ is a weakly $\bar{S}$-primary element of $L/i$ and $i$ is a weakly  $S$-primary element of $L$, then $q$ is a weakly   $S$-primary element of $L$.
	\end{enumerate}
\end{prop}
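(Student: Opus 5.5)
The argument will follow the earlier lemma on weakly $S$-prime elements in $L/i$, with $\sqrt{q}$ in place of $q$ in the second alternative. Throughout we work in $L/i$ with multiplication $x\circ y=xy\vee i$ and zero element $i$.

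We will repeatedly use the following fact. For $x\in L/i$, the radical of $q$ computed in $L/i$ equals $\sqrt{q}$ computed in $L$. Indeed, $x^{\circ n}=x^n\vee i$, and since $i\le q$, we have $x^n\vee i\le q$ if and only if $x^n\le q$. Compact elements of $L/i$ are exactly of the form $c\vee i$ with $c$ compact, so the two radicals are joins of the same elements.

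\emph{Part (1).} First we check, exactly as in Proposition \ref{P3.6}, that $\bar S$ is multiplicatively closed in $L/i$ and that $t'\nleq q$ for every $t'\in\bar S$. Now let $x,y\in L/i$ satisfy $i\neq x\circ y\le q$. Then $xy\le q$. If $xy=0$, we would get $x\circ y=i$, which is excluded, so $0\neq xy\le q$. The weakly $S$-primary property gives $s\in S$ with $sx\le q$ or $sy\le\sqrt q$. Since $i\le q\le\sqrt q$, it follows that $(s\vee i)\circ x\le q$ or $(s\vee i)\circ y\le\sqrt q$, as required.

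\emph{Parts (2) and (3).} First, $s\le q$ would force $s\vee i\le q$, so $t\nleq q$ for all $t\in S$. Take $a,b\in L$ with $ab\le q$ (with $ab\neq0$ in part (3)). Then $(a\vee i)\circ(b\vee i)=ab\vee i\le q$. We split into two cases.

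\emph{Case $ab\nleq i$.} Then $ab\vee i\neq i$, so $(a\vee i)\circ(b\vee i)\neq i$. The weakly $\bar S$-primary hypothesis yields $s\vee i$ with $sa\vee i\le q$ or $sb\vee i\le\sqrt q$. Hence $sa\le q$ or $sb\le\sqrt q$.

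\emph{Case $ab\le i$.} If $i$ is $S$-primary, there is $s'\in S$ with $s'a\le i\le q$ or $s'b\le\sqrt i\le\sqrt q$. In part (3), $ab\neq0$, so the weakly $S$-primary property of $i$ gives the same conclusion.

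The main obstacle is that the element $s$ must be chosen uniformly. In both cases we use $u=s_0s'$, where $s_0\vee i$ is the fixed element witnessing that $q$ is weakly $\bar S$-primary in $L/i$ and $s'$ is the fixed element for $i$. Since $u\le s_0$ and $u\le s'$, the inequalities obtained in each case persist when $s_0$ or $s'$ is replaced by $u$. This gives a single $u\in S$ that works for all pairs $a,b$, completing both parts.
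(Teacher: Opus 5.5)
Your proof is correct and follows essentially the paper's route: part (1) is the transfer argument of Proposition~\ref{P3.6}, with $xy=0$ excluded because it would force $x\circ y=i$. Parts (2) and (3) use the same case split on whether $ab\le i$, invoking the weakly $\bar S$-primary hypothesis in $L/i$ when $ab\nleq i$ and the (weakly) $S$-primary property of $i$ when $ab\le i$; your two additions---checking that the radical of $q$ in $L/i$ coincides with $\sqrt{q}$ in $L$, and using the single witness $u=s_0s'$ for both cases---supply details the paper's proof leaves implicit.
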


\begin{proof}
	\begin{enumerate}
		\item The proof is analogous to the proof of the Proposition \ref{P3.6}.
		
		\item Suppose that  $q$ is a weakly $\bar{S}$-primary element of $L/i$ and $i$ is an $S$-primary element of $L$. Since $q$ is a weakly $\bar{S}$-primary element of $L/i$, $t \nleq q$ for all $t \in S$.  Let $c,~d \in L$ such that $cd \leq q$. Since $i \leq q$, we get $(c \vee i ) (d \vee i) = cd \vee ci \vee di \vee i^2  = cd \vee i \leq q$. There are two cases. 
		
		First case is $cd \nleq i$ and second case is $cd \leq i$. Consider the first case $cd \nleq i$. This gives $cd \vee i \neq i$ in $L/i$.  As $(c \vee i), (d \vee i) \in L/i$ and $q$ is an $\bar{S}$-primary element of $L/i$.  there exists $s \vee i \in \bar{S}$ such that $(s \vee i) (c \vee i) \leq q$ or $(s \vee i) (d \vee i) \leq \sqrt{q}$. This gives $sc \vee si \vee ci \vee i^2 \leq q$ or $sd \vee si \vee di \vee i^2 \leq \sqrt{q}$. As $i \leq q \leq \sqrt{q}$, we get $ sc \vee si \vee ci \vee i^2 \vee i = sc \vee i  \leq q$ or $ sd \vee si \vee di \vee i^2 \vee i  = sd \vee i \leq \sqrt{q} $. Therefore $sc \leq sc \vee i \leq q $ or $sd \leq sd \vee i \leq \sqrt{q}$. Now, Consider the  second case $cd \leq i$. This gives $cd \vee i = i$ in $L/i$. Therefore $(c \vee i ) (d \vee i) = cd \vee ci \vee di \vee i^2  = cd \vee i \leq i$. Since $i$ is an $S$-primary element of $L$, there exists $s' \in S$ such that $s' (c \vee i) \leq i \leq q$ or $ s' (d \vee i) \leq \sqrt{i} \leq \sqrt{q}$. Hence in either case $q$ is an $S$-primary element of $L$.
		
		\item The proof is analogous to the proof of $(2)$.  
\end{enumerate}\end{proof}

\begin{rem}
	If $q_1$ is a weakly $S_1$-primary element of $c$-lattice $L_1$ and $q_2$ is a weakly $S_2$-primary element of $c$-lattice $L_2$, then $(q_1,q_2)$ need not be a weakly $(S_1\times S_2)$-primary element of $L_1\times L_2$. For this, consider $L_1=L_2=Id(\mathbb{Z})$ and $S_1=S_2=\{\mathbb{Z}\}$. Then $q_1=9\mathbb{Z}$ is weakly $S_1$-primary element of $L_1$ and $q_2=4\mathbb{Z}$ is weakly $S_2$-primary element of $L_2$. However, $(q_1,q_2)=(9\mathbb{Z},4\mathbb{Z})$ is not a weakly $(S_1\times S_2)$-primary element of $L_1\times L_2$ as $(0,0)\neq (9\mathbb{Z},\mathbb{Z})(\mathbb{Z},4\mathbb{Z})=(9\mathbb{Z},4\mathbb{Z})\leq (9\mathbb{Z},4\mathbb{Z})=(q_1,q_2)$, but $(9\mathbb{Z},\mathbb{Z})\nleq (q_1,q_2)$ and $(\mathbb{Z},4\mathbb{Z})\nleq (3\mathbb{Z},2\mathbb{Z})=\sqrt{(q_1,q_2)}$. Thus, $(q_1,q_2)$ is not a weakly $(S_1\times S_2)$-primary element of $L_1\times L_2$.
\end{rem}

\begin{thm}
	Let $S_1$, $S_2$ be the multiplicatively closed subsets of c-lattices $L_1$, $L_2$ respectively and $q_1\in L_1$, $q_2\in L_2$ be the non-zero elements. Let $L=L_1\times L_2$, $S=S_1\times S_2$ and $q=(q_1,q_2)$. Then the following are equivalent.
	\begin{enumerate}
		\item\label{T5.21.1} $q$ is a weakly $S$-primary element of $L$.
		\item\label{T5.21.2} $q_1$ is an $S_1$-primary element of $L_1$ and $t_2\leq q_2$ for some $t_2\in S_2$ or $q_2$ is an $S_2$-primary element of $L_2$ and $t_1\leq q_1$ for some $t_1\in S_1$.
		\item\label{T5.21.3} $q$ is an $S$-primary element of $L$.
	\end{enumerate}
\end{thm}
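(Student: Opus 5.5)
I would run the proof as the cycle $(1)\Rightarrow(2)\Rightarrow(3)\Rightarrow(1)$, copying the argument of the corresponding theorem for weakly $S$-prime elements of $L_1\times L_2$ with $\sqrt{\ }$ placed on the appropriate side. The step $(3)\Rightarrow(1)$ is Observation (3) of this section. Throughout I use that radicals are computed componentwise: $\sqrt{(q_1,q_2)}=(\sqrt{q_1},\sqrt{q_2})$, as in the proof of the earlier proposition on $(p_1,1)$.

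\emph{Step $(1)\Rightarrow(2)$.} Fix $s=(s_1,s_2)\in S$ from the weakly $S$-primary definition. First I show that some $t_1\in S_1$ satisfies $t_1\le q_1$, or some $t_2\in S_2$ satisfies $t_2\le q_2$. Suppose neither holds. Since $q_1\neq 0$, we have $(0,0)\neq (q_1,1)(1,q_2)=(q_1,q_2)\le q$. Then either $s(q_1,1)\le q$, which forces $s_2\le q_2$, or $s(1,q_2)\le\sqrt q$, which forces $s_1\le\sqrt{q_1}$ and hence $s_1^n\le q_1$ with $s_1^n\in S_1$. Both contradict the assumption. (Here I take $q_1$ itself as the product, rather than "choose some $(a,b)$", for definiteness.)

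Without loss of generality $t_1\le q_1$. Because $t\nleq q$ for every $t\in S$, we get $t_2\nleq q_2$ for all $t_2\in S_2$. I then show that $q_2$ is $S_2$-primary. Let $cd\le q_2$ in $L_2$. Take $(t_1,c)$ and $(1,d)$. Their product is $(t_1,cd)\le q$, and it is nonzero because $t_1\neq 0$ (as $0\notin S_1$). The weak property gives $s_2c\le q_2$ or $s_2d\le\sqrt{q_2}$, which is what we need.

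\emph{Step $(2)\Rightarrow(3)$.} Assume $q_1$ is $S_1$-primary with witness $u_1$, and $t_2\le q_2$. Put $t=(u_1,t_2)$. First, $t'\nleq q$ for all $t'\in S$, since the first coordinate of $t'$ lies in $S_1$ and so is not below $q_1$. Now let $(a_1,a_2)(b_1,b_2)\le q$. Then $a_1b_1\le q_1$, so $u_1a_1\le q_1$ or $u_1b_1\le\sqrt{q_1}$. In the first case $t(a_1,a_2)=(u_1a_1,t_2a_2)\le(q_1,q_2)$. In the second case $t(b_1,b_2)\le(\sqrt{q_1},q_2)\le\sqrt q$.

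The only delicate points are the nonzero-product checks in $(1)\Rightarrow(2)$, which need $q_1\neq0$ and $0\notin S_1$, and the conversion $s_1\le\sqrt{q_1}\Rightarrow s_1^n\le q_1$. That conversion requires $s_1$ to be compact, which holds because $S\subseteq L_*$.
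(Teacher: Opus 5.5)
Your proposal is correct and follows the paper's proof essentially step for step. It uses the same cycle $(1)\Rightarrow(2)\Rightarrow(3)\Rightarrow(1)$, the same test products of the form $(c,1)(1,d)$ and $(u,c)(1,d)$ with $u\in S_1$, $u\le q_1$, and the same witness $(t_1,t_2)$ in $(2)\Rightarrow(3)$. Taking $(c,d)=(q_1,q_2)$ specifically, and explicitly checking that $t'\nleq q$ for all $t'\in S$ and that $S\subseteq L_*$, are minor tidyings of the paper's argument, not a different route.
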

\begin{proof}
	(\ref{T5.21.1}) $\implies$ (\ref{T5.21.2}) Assume on the contrary that $t_1\nleq q_1$ for all $t_1\in S_1$ and $t_2\nleq q_2$ for all $t_2\in S_2$. Since $q_1$ and $q_2$ are non-zero, choose a non-zero element $(c,d)\leq q$. Then $(0,0)\neq (c,1)(1,d)=(c,d)\leq q$. Since $q$ is a weakly $S$-primary element of $L$, there exists $s=(s_1,s_2)\in S$ such that $s(c,1)\leq q$ or $s(1,d)\leq \sqrt{q}$. If $s(c,1)\leq q$, then $s_1c\leq q_1$ and $s_2\leq q_2$, a contradiction to our assumption. If $s(1,d)\leq \sqrt{q}=(\sqrt{q_1},\sqrt{q_2})$, then $s_1\leq \sqrt{q_1}$ and $s_2d \leq \sqrt{q_2}$. This implies $s_1^n\leq q_1$ for some positive integer $n$, a contradiction to our assumption. Hence, $t_1\leq q_1$ for some $t_1\in S_1$ or $t_2\leq q_2$ for some $t_2\in S_2$. Without loss of generality, assume that $t_1\leq q_1$ for some $t_1\in S_1$. Since $q$ is a weakly $S$-primary element, $t\nleq q$ for all $t\in S$. Therefore $(t_1,t_2)\nleq (q_1,q_2)$ for all $t_2\in S_2$. As $t_1\leq q_1$, we have $t_2\nleq q_2$ for all $t_2\in S_2$. Suppose $a,b \in L_2$ with $ab \leq q_2$. Choose $0\neq u\in S_1$ such that $u\leq q_1$. Then $(0,0)\neq (u,a)(1,b)\leq (q_1,q_2)=q$. This implies $s(u,a)=(s_1u,s_2a)\leq q$ or $s(1,b)=(s_1,s_2b)\leq \sqrt{q}$. Hence $s_2a\leq q_2$ or $s_2b \leq \sqrt{q_2}$. Thus, $q_2$ is an $S_2$-primary element of $L_2$.
	
	(\ref{T5.21.2}) $\implies$ (\ref{T5.21.3}) Assume that $q_1$ is an $S_1$-primary element of $L_1$ and $t_2\leq q_2$ for some $t_2\in S_2$. Let $c_1,d_1\in L_1$ and $c_2,d_2\in L_2$ be such that $(c_1,c_2)(d_1,d_2)\leq q$. Then $c_1d_1\leq q_1$. Since $q_1$ is an  $S_1$-primary element, there exists $t_1\in S_1$ such that $t_1c_1\leq q_1$ or $t_1d_1\leq \sqrt{q_1}$. Let $t=(t_1,t_2)$. Then $t(c_1,c_2)=(t_1c_1,t_2c_2)\leq (q_1,q_2)=q$ or $t(d_1,d_2)=(t_1d_1,t_2d_2)\leq (\sqrt{q_1},\sqrt{q_2})=\sqrt{q}$. Thus, $q$ is an $S$-primary element of $L$.
	
	(\ref{T5.21.3}) $\implies$ (\ref{T5.21.1}) Obvious.
\end{proof}

\begin{cor}
	Let $S_1, S_2, \dots , S_n$ be the multiplicatively closed subsets of $c$-lattices $L_1, L_2, \dots , L_n$ respectively and $q=(q_1,q_2,\dots,q_n)\in L$. Let $L=L_1\times L_2\times \dots \times L_n$ and $S=S_1\times S_2\times \dots \times S_n$. Then the following are equivalent.
	\begin{enumerate}
		\item
		$q$ is a weakly $S$-primary element of $L$.
		\item
		For some $j\in \{1,2,\dots,n\}$, $q_j$ is an $S_j$-primary element of $L_j$ and for all $i\in \{1,2,\dots,n\}\setminus \{j\}$, there exists $t_i\in S_i$ such that $t_i\leq q_i$.
	\end{enumerate}
\end{cor}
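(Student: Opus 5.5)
The plan is induction on $n$, with Theorem~T5.21 as the base case $n=2$; the corollary is the $n$-fold version of it. As in that theorem, I will assume the components $q_i$ are non-zero. Without this, the case $q=0$ breaks the equivalence, because $0$ is trivially weakly $S$-primary. The case $n=1$ is vacuous: in condition (2), $q_1$ must itself be $S_1$-primary, and every $S_1$-primary element is weakly $S_1$-primary.

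For $(2)\Rightarrow(1)$ I will argue directly, exactly as in (\ref{T5.21.2}) $\implies$ (\ref{T5.21.3}) of Theorem~T5.21. Suppose $c=(c_i)$ and $d=(d_i)$ satisfy $cd\le q$. Then $c_jd_j\le q_j$, so there is $s_j\in S_j$ with $s_jc_j\le q_j$ or $s_jd_j\le\sqrt{q_j}$. Put $t=(t_1,\dots,s_j,\dots,t_n)\in S$. Since $t_ic_i\le t_i\le q_i$ and $\sqrt{q}=(\sqrt{q_1},\dots,\sqrt{q_n})$ componentwise, we get $tc\le q$ or $td\le\sqrt q$. So $q$ is in fact $S$-primary, and hence weakly $S$-primary.

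For $(1)\Rightarrow(2)$, first identify $L=L_1\times\cdots\times L_n$ with $L_1\times L'$, where $L'=L_2\times\cdots\times L_n$ and $S=S_1\times S'$, and apply Theorem~T5.21 to the pair $(q_1,q')$; here $q'\neq0$ because its components are non-zero. This leaves two cases.

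\emph{Case 1:} $q_1$ is $S_1$-primary and $t'\le q'$ for some $t'\in S'$. Then $t'=(t_2,\dots,t_n)$ gives $t_i\le q_i$ for every $i\ge2$, so (2) holds with $j=1$.

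\emph{Case 2:} $q'$ is $S'$-primary and $t_1\le q_1$ for some $t_1\in S_1$. An $S'$-primary element is weakly $S'$-primary, so the induction hypothesis applies to $q'$ in $L'$. It yields some $j\ge2$ with $q_j$ an $S_j$-primary element and $t_i\le q_i$ for the remaining $i\ge2$. Together with $t_1\le q_1$, this is (2).

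The main obstacle is checking that Theorem~T5.21 legitimately applies to the split $L_1\times L'$. First, $L'$ must be a $c$-lattice: compact elements of a finite product are exactly the tuples of compacts, so $L'$ is compactly generated, $1$-compact, and closed under products of compacts. Second, $S'$ must be multiplicatively closed. Third, the radical must be computed componentwise, $\sqrt{(q_2,\dots,q_n)}=(\sqrt{q_2},\dots,\sqrt{q_n})$; this follows because compact elements are tuples of compacts and powers are taken componentwise. Since the corollary is stated for $V$-lattices, I will read it in the $c$-lattice setting of this section, where radicals make sense.
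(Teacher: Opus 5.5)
The paper states this corollary without proof, as the $n$-fold form of Theorem 5.21. Your argument is the intended one and is correct for $n\ge 2$: (2)$\Rightarrow$(1) directly, and (1)$\Rightarrow$(2) by induction through $L_1\times(L_2\times\cdots\times L_n)$, with the checks that $L'$ is a $c$-lattice, that $S'$ is multiplicatively closed, and that radicals are computed componentwise. You are also right that a non-vanishing hypothesis must be added. For $n\ge 2$ the element $0$ is weakly $S$-primary, but (2) fails for it because $0\notin S_i$.

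One claim in your write-up is false: the case $n=1$ is not vacuous. Your sentence only proves (2)$\Rightarrow$(1). For $n=1$, the implication (1)$\Rightarrow$(2) says every weakly $S_1$-primary element is $S_1$-primary, and Proposition 5.7 gives this only when $q_1^2\neq 0$. The paper's own example $(0)\in Id(\mathbb{Z}_{10})$ refutes it. Your non-zero hypothesis does not save it either.

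For a non-zero counterexample, take a field $F$, the idealization $R=F[X,Y]/(XY)\ltimes F$ (with $X,Y$ acting as $0$ on $F$), and $S_1=\{R\}$. The ideal $I=0\ltimes F$ is non-zero and weakly prime in $Id(R)$. Indeed, suppose $JK\subseteq I$ and the images of $J$ and $K$ in $F[X,Y]/(XY)$ are both non-zero. Then those images consist of zero-divisors, so they lie in $(X)\cup(Y)\subseteq(X,Y)$, which annihilates $F$; hence $JK=0$. On the other hand, $\sqrt{I}=I$ and $I$ is not prime, since $R/I\cong F[X,Y]/(XY)$ is not a domain. So $I$ is weakly primary but not primary.

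The corollary therefore has to be read for $n\ge 2$. Your induction is anchored at $n=2$ and only invokes the hypothesis for $n-1\ge 2$ factors, so the rest of your argument is unaffected.
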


\end{document}